\newtheorem{theorem}{Theorem}[section]
\newtheorem{lemma}[theorem]{Lemma}
\newtheorem*{theorem*}{Theorem}
\newtheorem*{lemma*}{Lemma}
\newtheorem*{remark*}{Remark}
\newtheorem*{definition*}{Definition}
\newtheorem*{proposition*}{Proposition}
\newtheorem*{corollary*}{Corollary}
\numberwithin{equation}{section}
\newcommand{\real}{\mathbb{R}}
\let\ced=\c         
\def\qed{\,\unskip\kern 6pt \penalty 500
\raise -2pt\hbox{\vrule \vbox to8pt{\hrule width 6pt
\vfill\hrule}\vrule}\par}
\definecolor{darkblue}{rgb}{0.05, .05, .65}
\definecolor{darkgreen}{rgb}{0.1, .65, .1}
\definecolor{darkred}{rgb}{0.8,0,0}
\newcommand{\beqn}{\begin{equation}}
\newcommand{\eeqn}{\end{equation}}
\newcommand{\bear}{\begin{eqnarray}}
\newcommand{\eear}{\end{eqnarray}}
\newcommand{\bean}{\begin{eqnarray*}}
\newcommand{\eean}{\end{eqnarray*}}
\begin{document}

\title{\huge \bf Self-similar blow-up solutions for the supercritical parabolic Hardy-H\'enon equation}

\author{
\Large Razvan Gabriel Iagar\,\footnote{Departamento de Matem\'{a}tica
Aplicada, Ciencia e Ingenieria de los Materiales y Tecnologia
Electr\'onica, Universidad Rey Juan Carlos, M\'{o}stoles,
28933, Madrid, Spain, \textit{e-mail:} razvan.iagar@urjc.es}, \ \Large Ana Isabel Mu\~{n}oz\,\footnote{Departamento de Matem\'{a}tica
Aplicada, Ciencia e Ingenieria de los Materiales y Tecnologia
Electr\'onica, Universidad Rey Juan Carlos, M\'{o}stoles,
28933, Madrid, Spain, \textit{e-mail:} anaisabel.munoz@urjc.es},
\\[4pt] \Large Ariel S\'{a}nchez\footnote{Departamento de Matem\'{a}tica
Aplicada, Ciencia e Ingenieria de los Materiales y Tecnologia
Electr\'onica, Universidad Rey Juan Carlos, M\'{o}stoles,
28933, Madrid, Spain, \textit{e-mail:} ariel.sanchez@urjc.es}\\
[4pt] }
\date{}
\maketitle

\begin{abstract}
We classify the self-similar solutions presenting finite time blow-up to the parabolic Hardy-H\'enon equation
$$
\partial_tu=\Delta u+|x|^{\sigma}u^p, \quad (x,t)\in\real^N\times(0,\infty),
$$
in dimension $N\geq3$ and the range of exponents
$$
\sigma\in(-2,\infty), \quad p>p_S(\sigma):=\frac{N+2\sigma+2}{N-2}.
$$
We establish the \emph{existence of self-similar blow-up solutions for any $p>p_S(\sigma)$}, provided $\sigma\geq2$. Moreover, we prove that, if $k$ is any natural number and $\sigma\geq 4k-2$, the parabolic Hardy-H\'enon equation has at least $k$ different self-similar blow-up solutions for any $p>p_S(\sigma)$. These results are in a stark contrast with the standard reaction-diffusion equation
$$
\partial_tu=\Delta u+u^p, \quad (x,t)\in\real^N\times(0,\infty),
$$
for which non-existence of any self-similar solution has been established, provided $p$ overpasses the Lepin exponent
$p_L:=1+\frac{6}{N-10}$, $N\geq11$.

For $\sigma\in(-2,2)$, we derive the expression of generalized Lepin exponents $p_L(\sigma)$ for $\sigma\in(0,2)$, respectively $\overline{p_L}(\sigma)$ for $\sigma\in(-2,0)$, and prove existence of self-similar solutions with finite time blow-up for $p\in(p_S(\sigma),p_L(\sigma))$, respectively $p\in(p_S(\sigma),\overline{p_L}(\sigma))$. Numerical evidence of the optimality of these exponents is also included.
\end{abstract}

\

\noindent {\bf Mathematics Subject Classification 2020:} 35A09, 35A24, 35B33, 35B44, 35C06, 35K57.

\smallskip

\noindent {\bf Keywords and phrases:} Hardy-H\'enon equation, reaction-diffusion, self-similar solutions, Lepin exponent, weighted reaction.

\section{Introduction}

The semilinear parabolic equation
\begin{equation}\label{eq1.hom}
\partial_tu=\Delta u+u^p, \quad p>1,
\end{equation}
has become the classical model of a reaction-diffusion equation and has been strongly analyzed in the last half century. The main mathematical feature of its solutions is the finite time blow-up, which means the existence of a time $T\in(0,\infty)$ such that $u(t)\in L^{\infty}(\real^N)$ for any $t\in(0,T)$ but $u(T)$ becomes unbounded. Many important research works originated from the quest for understanding how and for which initial conditions finite time blow-up takes place, the monograph \cite{QS} being an excellent reference for the progress achieved by the work on this problem. As a cornerstone of this analysis, in the renowned paper by Fujita \cite{Fu66} the critical exponent $p_F=1+2/N$, now called \emph{the Fujita exponent}, is derived with the following property: for any $p\in(1,p_F]$, any non-trivial solution presents finite time blow-up, while for $p>p_F$, there are also global solutions (that is, solutions $u$ to \eqref{eq1.hom} such that $u(t)\in L^{\infty}(\real^N)$ for any $t>0$). From the study of finer properties related to finite time blow-up, such as the blow-up rates, a new critical exponent, known as \emph{the Sobolev critical exponent}, has been established:
\begin{equation}\label{pS.hom}
p_S=\left\{\begin{array}{ll}\frac{N+2}{N-2}, & {\rm if} \ N\geq3, \\ +\infty, & {\rm if} \ N\in\{1,2\}.\end{array}\right.
\end{equation}
In the well-known works by Giga and Kohn \cite{GK85, GK87}, by employing the technique of energy estimates in backward self-similar variables (see also \cite[Remark 23.4]{QS}), it is proved that the blow-up rate of solutions to Eq. \eqref{eq1.hom} for $p\in(1,p_S)$ is always given by
\begin{equation}\label{BUrate}
\|u(x,t)\|_{\infty}\sim K(T-t)^{-1/(p-1)}, \quad K=\left(\frac{1}{p-1}\right)^{1/(p-1)}, \quad {\rm as} \ t\to T,
\end{equation}
a rate which is now called the \emph{self-similar rate}. The supercritical case $p>p_S$ is much more complex, and in particular, solutions that blow up in a finite time but with a different rate than \eqref{BUrate} have been deduced for the first time by Herrero and Vel\'azquez in \cite{HV94}. Such a finite time blow-up is nowadays known as \emph{blow-up of type II} and has been classified in \cite{MM09, MM11}. A different phenomenon known as the \emph{continuation after blow-up} may hold true for $p>p_S$ as well; that is, a class of solutions that blow up at the origin at $t=T$ but afterwards they become again finite has been identified, such solutions being called sometimes \emph{peaking solutions} (see for example \cite{GV97}).

A particular class of solutions that is very useful in understanding the mathematical properties of general solutions to nonlinear diffusion equations is composed by the \textbf{self-similar solutions}. Since our interest is focused on finite time blow-up, the prototype for this phenomenon are the \emph{backward (or blow-up) self-similar solutions}, in the form
\begin{equation}\label{backwardSS.hom}
u(x,t)=(T-t)^{-1/(p-1)}f(|x|(T-t)^{-1/2}), \quad T\in(0,\infty),
\end{equation}
blowing up at time $T$ with a rate given by \eqref{BUrate}, where $f$ is called the profile of the self-similar solution $u$. It has been shown (see \cite[Chapter 23]{QS}) that these solutions represent the local behavior as $t\to T$ near any blow-up point of any general solution to Eq. \eqref{eq1.hom}. Moreover, a quest for establishing the existence and then classifying such solutions led to two new critical exponents. On the one hand, the \emph{Joseph-Lundgren exponent}
\begin{equation}\label{pJL.hom}
p_{JL}=\left\{\begin{array}{ll}1+\frac{4}{N-4-2\sqrt{N-1}}, & {\rm if} \ N\geq11,\\[1mm] +\infty, & {\rm if} \ N<11,\end{array}\right.
\end{equation}
appeared first in the study of \eqref{eq1.hom} in \cite{JL73} (see also \cite[Chapter 9]{QS}) and has the property (among others) that infinitely many self-similar solutions in the form \eqref{backwardSS.hom} exist for $p\in(p_S,p_{JL})$. On the other hand, the \emph{Lepin exponent}
\begin{equation}\label{Lepin.hom}
p_L=\left\{\begin{array}{ll}1+\frac{6}{N-10}, & {\rm if} \ N\geq11, \\[1mm] +\infty, & {\rm if} \ N<11,\end{array}\right.
\end{equation}
has been introduced in \cite{Le90} as an upper limit for the existence of blow-up self-similar solutions to Eq. \eqref{eq1.hom}. Remarkably, Mizoguchi established in a series of papers \cite{Mi04, Mi09, Mi10} that, indeed, there are no backward self-similar solutions to Eq. \eqref{eq1.hom} if $p\geq p_L$.

\medskip

\noindent \textbf{The Hardy-H\'enon equation.} In this paper, we are interested in the generalization of Eq. \eqref{eq1.hom},
\begin{equation}\label{eq1}
\partial_tu=\Delta u+|x|^{\sigma}u^p,
\end{equation}
known as the \emph{parabolic Hardy-H\'enon equation}, whose source term features an unbounded power weight $|x|^{\sigma}$. The name given to Eq. \eqref{eq1} stems from, on the one hand, the paper by H\'enon \cite[Eq. (A.6)]{He73} in which the elliptic counterpart of Eq. \eqref{eq1} is proposed in a model from astrophysics, and on the other hand from the paper by Baras and Goldstein \cite{BG84}, where Eq. \eqref{eq1} with $p=1$ and $\sigma=-2$ is proposed and existence and non-existence of solutions is established, depending on the optimal constant of Hardy's inequality. In this work, the dimension $N$ and exponents $(p,\sigma)$ are restricted to
\begin{equation}\label{range.exp}
N\geq3, \quad \sigma\in(-2,\infty), \quad p>1.
\end{equation}
Dimensions $N=1$ and $N=2$ are excluded from the analysis since, as we shall see below, the critical exponents limiting our study are finite only when $N\geq3$.

The mathematical analysis of solutions to Eq. \eqref{eq1} has become fashionable in the last decade due to the strong and interesting influence of the spatially-dependent coefficient, with a number of papers studying optimal conditions for existence of solutions, their regularity, large time behavior in some ranges of exponents, an analysis of the blow-up at infinity, examples of blow-up of type II and other qualitative properties; we quote here the works \cite{BSTW17, BS19, MS21, CIT21, CIT22, CITT24, SU24} (see also references therein), as well as the generalization of the mathematical theory to the Hardy-H\'enon equation with quasilinear diffusion in \cite{IL25}.

Similarly as for Eq. \eqref{eq1.hom}, a very important role in the mathematical analysis of Eq. \eqref{eq1} is played by the self-similar solutions presenting finite time blow-up, having the form
\begin{equation}\label{backwardSS}
u(x,t)=(T-t)^{-\alpha}f(|x|(T-t)^{-1/2}), \quad \alpha=\frac{\sigma+2}{2(p-1)}>0.
\end{equation}
The Sobolev and Joseph-Lundgren critical exponents have modified forms with respect to \eqref{pS.hom} and \eqref{pJL.hom}, namely
\begin{equation}\label{pS}
p_S(\sigma)=\begin{cases}\frac{N+2\sigma+2}{N-2}, & {\rm if} \ N\geq3,\\
\infty, & {\rm if} \ N\in\{1,2\},\end{cases}
\end{equation}
and respectively
\begin{equation}\label{pJL}
p_{JL}(\sigma)=\left\{\begin{array}{ll}\frac{(N-2)^2-2(N+\sigma)(\sigma+2)+2(\sigma+2)\sqrt{(N+\sigma)^2-(N-2)^2}}{(N-2)(N-10-4\sigma)}, & N>10+4\sigma, \\ +\infty, & N\leq 10+4\sigma.\end{array}\right.
\end{equation}
A complete classification of the blow-up self-similar solutions in the form \eqref{backwardSS} to Eq. \eqref{eq1} and with $\sigma\in(-2,\infty)$ has been achieved in \cite{FT00} in the range of exponents $1<p<p_{JL}(\sigma)$. It is thus established therein that: for $\sigma\geq0$ no blow-up self-similar solution exists for $1<p<p_S(\sigma)$, while at least one self-similar solution in the form \eqref{backwardSS} with a decreasing profile exists for $\sigma\in(-2,0)$ and $1<p<p_S(\sigma)$. In the interval $p_S(\sigma)<p<p_{JL}(\sigma)$ the outcome is more striking: for any $\sigma\in(-2,\infty)$ there are infinitely many different self-similar solutions in the form \eqref{backwardSS}. 

We also mention that the authors developed a program of understanding self-similar solutions to the Hardy-H\'enon equation with quasilinear diffusion $\Delta u^m$, $m\neq1$, and we quote here, among other results, the works \cite{IMS23, ILS24, IS22, IS25a, IS25b} as the most interesting results obtained in this study.

To the best of our knowledge, there is no analysis of self-similar solutions to Eq. \eqref{eq1} for any $\sigma\neq0$ and $p>p_{JL}(\sigma)$, and the goal of the present paper is to complete this analysis. Moreover, it was a rather big surprise to us when we realized that the outcome of our analysis, as it will be described in the paragraph devoted to the main results, \emph{strongly departs from the homogeneous equation \eqref{eq1.hom}} and that the exponents $\sigma=4k-2$ with $k\geq1$ a natural number are critical with respect to the number of different self-similar solutions. We are thus in a position to state our main results.

\medskip

\noindent \textbf{Main results.} We assume throughout this work that the conditions in \eqref{range.exp} are in force and, furthermore,
\begin{equation}\label{range.exp2}
N>10+4\sigma, \quad p>p_{JL}(\sigma).
\end{equation}
As commented in the previous paragraphs, our aim is to classify the ranges of existence, non-existence, and multiplicity of self-similar solutions in backward form \eqref{backwardSS} to Eq. \eqref{eq1}. Plugging the ansatz \eqref{backwardSS} into Eq. \eqref{eq1}, we deduce by direct calculation that the profile $f$ of a self-similar solution solves the differential equation
\begin{equation}\label{SSODE}
f''(\xi)+\frac{N-1}{\xi}f'(\xi)-\frac{1}{2}\xi f'(\xi)-\frac{\sigma+2}{2(p-1)}f(\xi)+\xi^{\sigma}f(\xi)^p=0, \quad \xi>0.
\end{equation}
Before stating our main results, let us recall that Eq. \eqref{eq1} admits in the range of exponents \eqref{range.exp}-\eqref{range.exp2} the stationary solution
\begin{equation}\label{stat.sol}
U(x)=C(\sigma)|x|^{-(\sigma+2)/(p-1)}, \quad C(\sigma)=\left[\frac{(\sigma+2)[(N-2)p-(N+\sigma)]}{(p-1)^2}\right]^{1/(p-1)}.
\end{equation}
We are now in a position to state the main theorems of this work and we start with, in our opinion, the most unexpected result.
\begin{theorem}\label{th.large}
Let $N$, $p$ satisfying \eqref{range.exp2} and $\sigma\geq2$. Then there exists at least one radially symmetric self-similar solution to Eq. \eqref{eq1} in the form \eqref{backwardSS} such that its profile satisfies
\begin{equation}\label{decay.peak}
f(0)>0, \quad f'(0)=0, \quad \lim\limits_{\xi\to\infty}\xi^{(\sigma+2)/(p-1)}f(\xi)=K\in(0,C(\sigma)).
\end{equation}
Let $k\geq1$ be a natural number. In the same conditions, if $\sigma\geq4k-2$, there are at least $k$ different self-similar solutions to Eq. \eqref{eq1} whose profiles solve \eqref{SSODE} and satisfy the conditions \eqref{decay.peak}.
\end{theorem}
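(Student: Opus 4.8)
The plan is to fix the shooting parameter $a=f(0)>0$, to recast \eqref{SSODE} as an asymptotically autonomous oscillator, and to reduce the count of solutions to a rotation number around the singular profile \eqref{stat.sol}. For every $a>0$ the problem \eqref{SSODE} with $f(0)=a$, $f'(0)=0$ has a unique local solution $f_a$, continuous in $a$ (regular singular point at $\xi=0$). Setting $s=\ln\xi$ and $w(s)=\xi^{\beta}f(\xi)$ with $\beta=(\sigma+2)/(p-1)=2\alpha$, a direct computation together with the identity $C(\sigma)^{p-1}=\beta(N-2-\beta)$ transforms \eqref{SSODE} into
\[
w''+\Big(N-2-2\beta-\tfrac12 e^{2s}\Big)w'+w^p-C(\sigma)^{p-1}w=0 .
\]
The frozen system ($e^{2s}\equiv0$) has two relevant rest states: $w=0$, a saddle whose one--dimensional unstable manifold (growth rate $\beta$) carries exactly the regular profiles $f_a$; and $w=C(\sigma)$, corresponding to the singular solution $U$. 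Under \eqref{range.exp2} the discriminant $(N-2-2\beta)^2-4(p-1)C(\sigma)^{p-1}$ is positive, so $w=C(\sigma)$ is a (non--oscillatory) stable node. In these variables \eqref{decay.peak} becomes: the orbit stays positive and tends, as $s\to+\infty$, to a constant $K\in(0,C(\sigma))$ on the slow manifold $w\to K$, i.e.\ the super--exponentially growing mode $\sim e^{\xi^2/4}$ is switched off.

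Next I would classify the terminal behaviour of $f_a$ as $a$ runs over $(0,\infty)$: either $f_a$ develops a zero, or it stays positive with fast decay ($K=0$), or it stays positive with slow decay and some $K(a)>0$. For small $a$ the reaction is negligible and $f_a$ is essentially a decaying solution of the linear Ornstein--Uhlenbeck operator, giving fast decay and no intersection with $U$; for large $a$, the inner rescaling $\xi=a^{-1/\beta}\eta$ shows that $f_a$ first tracks the H\'enon elliptic profile and then descends to meet $U$. A Sturm comparison with $U$ (zeros of $f_a-U$ persist under small perturbations), together with continuity of $a\mapsto f_a$, then drives an intermediate--value argument: between the two extreme terminal behaviours there is an $a_*$ for which the fast mode is switched off and the orbit lands on the slow manifold with $K(a_*)\in(0,C(\sigma))$. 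This produces one solution with the properties \eqref{decay.peak}.

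The crux, and the origin of the threshold, is the rotation count. For large $a$ the orbit reaches a neighbourhood of the node $(C(\sigma),0)$ while $s$ is still very negative, hence while the damping $b(s)=N-2-2\beta-\tfrac12 e^{2s}$ is still positive, so it contracts monotonically onto $(C(\sigma),0)$; only for $s$ past $s^\ast=\tfrac12\ln\big(2(N-2-2\beta)\big)$ does $b(s)$ turn negative and the motion become \emph{anti--damped}. On the finite window where $b(s)^2<4(p-1)C(\sigma)^{p-1}$ the linearization $v''+b(s)v'+(p-1)C(\sigma)^{p-1}v=0$ about $w=C(\sigma)$ is oscillatory, and a WKB/phase estimate evaluates its rotation number as
\[
\mathcal N\approx\frac{1}{4\pi}\int_{-c}^{c}\frac{\sqrt{c^{2}-b^{2}}}{(N-2-2\beta)-b}\,db=\frac14\Big[(N-2-2\beta)-\sqrt{(N-2-2\beta)^{2}-4(p-1)C(\sigma)^{p-1}}\Big],
\]
with $c=2\sqrt{(p-1)C(\sigma)^{p-1}}$. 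Using $C(\sigma)^{p-1}=\beta(N-2-\beta)$, the inequality $\mathcal N\geq(\sigma+2)/4$ reduces to the elementary bound $0\leq\beta(p-1)\big[2(N-2)+\beta(p-1)\big]$, valid throughout \eqref{range.exp2}. Thus the orbit winds around $w=C(\sigma)$ at least $(\sigma+2)/4$ times before the strong anti--damping finally expels it; note that $(\sigma+2)/4\geq1$ exactly when $\sigma\geq2$, matching the first assertion. Feeding this rotation number into the shooting argument --- each additional turn opening one more parameter interval on which the fast mode vanishes with $K\in(0,C(\sigma))$ --- yields at least $\lfloor(\sigma+2)/4\rfloor\geq k$ distinct profiles satisfying \eqref{decay.peak} whenever $\sigma\geq4k-2$.

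The hard part will be turning the linear rotation number $\mathcal N$ into genuinely distinct nonlinear connecting orbits. Three issues require care: (i) controlling the orbit globally so that the shooting map is well defined (ruling out finite--$\xi$ blow--up and tracking positivity); (ii) showing that the nonlinearity $w^p$ does not destroy the contraction into the node nor the subsequent anti--damped winding, i.e.\ that the amplitude remains in the regime governed by the linearization; and (iii) converting each turn into a separate solution with $K$ in the open interval $(0,C(\sigma))$, rather than at the endpoints $K=0$ (fast decay) or at a sign change. I expect (ii)--(iii) to be the main obstacle: one must show the winding persists under the full flow and is recorded by a lap number that is monotone in $a$, which is where a careful Sturmian/degree argument in the phase space compactified by the extra variable $T=e^{2s}$ (with $T'=2T$) becomes necessary.
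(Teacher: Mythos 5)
Your reduction is the paper's own: \eqref{var.change} yields \eqref{ODE2}, the linearization about $C(\sigma)$ is \eqref{ODE.linear}, and the thresholds are read off from the oscillation of that linear equation, exactly as the paper does via Lepin's theorem \cite{Le89}. The genuine problem is the quantitative core. Your WKB constant is off by a factor of two: for $v''+b(s)v'+Bv=0$ the substitution $v=e^{-\frac12\int b}u$ gives frequency $\sqrt{B-b^2/4}=\tfrac12\sqrt{c^2-b^2}$, so the winding number is $\tfrac18\bigl[A-\sqrt{A^2-4B}\bigr]$, not $\tfrac14\bigl[A-\sqrt{A^2-4B}\bigr]$ (one turn costs \emph{two} zeros; compare Lepin's exact count used in the paper: exactly $j+2$ zeros when $4(j+1)<A-\sqrt{A^2-4B}\leq4(j+2)$). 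Your accounting ``one solution per unit of $\mathcal N=\tfrac14(A-\sqrt{A^2-4B})$'' is then demonstrably false: for $\sigma=0$ one computes $B-2A+4=4+\tfrac{4}{p-1}>0$, hence $A-\sqrt{A^2-4B}>4$, i.e.\ $\mathcal N>1$, for \emph{every} $p>p_{JL}$, so your criterion would produce a backward self-similar solution of \eqref{eq1.hom} for all supercritical $p$, contradicting the non-existence theorem of Mizoguchi for $p\geq p_L$ \cite{Mi04,Mi09,Mi10} that the paper explicitly contrasts with. The correct bookkeeping (the paper's) is that $k$ solutions require $2k+1$ crossings of the singular solution, i.e.\ $A-\sqrt{A^2-4B}>8k$; with the corrected constant, your (correctly derived) bound $A-\sqrt{A^2-4B}\geq\sigma+2$ only reaches the threshold $\sigma\geq8k-2$. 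To get $\sigma\geq4k-2$ one needs the stronger inequality $A-\sqrt{A^2-4B}\geq2(\sigma+2)$, which does hold --- after squaring it reduces to $(\sigma+2)\bigl[\tfrac{\sigma+2}{p-1}+\sigma+2\bigr]\geq0$, and it is in substance the paper's \eqref{main.ineq2} and \eqref{main.ineq2.mult} --- but it is not what you proved. As written, your numbers match the theorem only because the factor-of-two error in the integral cancels the factor-of-two deficit in your bound.

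Second, even with the numerology repaired, the proposal defers exactly the part that constitutes the proof: converting the linear oscillation count into distinct nonlinear orbits joining the regular end $f(0)>0$, $f'(0)=0$ to the algebraic tail, and two of the mechanisms you propose for this step would fail. Your terminal trichotomy is wrong: there is no ``fast decay ($K=0$)'' branch, and small-$a$ profiles do not decay --- they grow like $a\,e^{\xi^2/4}$ (Kummer asymptotics of the linear part), the actual alternatives to $K\in(0,C(\sigma))$ being vanishing at a finite $\xi_0$ or unbounded/slower behavior (the paper's critical points $Q_3$, $Q_5$, $Q_{\gamma_0}$); so any intermediate-value argument must be run between those alternatives, not yours. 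Moreover, the ``lap number monotone in $a$'' you hope for is neither proved nor needed in the paper: its substitute is openness-plus-connectedness of the parameter sets $\mathcal U$, $\mathcal W$, $\mathcal V$ of \eqref{sets1} (a \emph{backward} shooting along the trajectories $\mathcal{C}_k$ entering $Q_1$ on its center manifold), combined with the exclusion lemmas (no tangency to $\{Z=Z_0\}$, no connection to $Q_4$, Lemmas \ref{lem.P1P2}, \ref{lem.Q4}, \ref{lem.X0}, \ref{lem.Z0}) and, for multiplicity, an induction through tubular neighborhoods of the saddle connections at $P_1$ and $P_0$. None of this, nor any workable replacement, is supplied in the proposal.
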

This result is in striking contrast to the homogeneous case $\sigma=0$, corresponding to the usual reaction-diffusion equation Eq. \eqref{eq1.hom}. Indeed, as previously commented, Mizoguchi proved in \cite{Mi04, Mi09, Mi10} the non-existence of any blow-up self-similar solution to Eq. \eqref{eq1.hom} if $p\geq p_L$, where $p_L$ is defined in \eqref{Lepin.hom}. Another remarkable fact is that the self-similar solutions obtained in Theorem \ref{th.large} are expected to be \emph{peaking solutions}, according to the terminology of \cite{GV97}; that is, solutions that can be continued after the blow-up time $T$ by a forward, global in time, self-similar solution, as explained in \cite[Theorem 11.1]{GV97}. The difference between peaking solutions and solutions presenting complete blow-up, at least in the case of quasilinear (degenerate) diffusion, is noticed at the level of the limit equal to $K$ in \eqref{decay.peak} and depends on whether $K<C(\sigma)$ (peaking) or $K>C(\sigma)$ (complete blow-up), and it is highly expected that the same phenomenon holds true in our semilinear case. However, we refrain here from entering more deeply into this discussion. To end these comments, another remarkable fact is the multiplicity of such solutions, and that their number increases with $\sigma$, showing one more interesting effect of the presence of the spatially-dependent coefficient.

For $\sigma\in(0,2)$, a Lepin-type exponent still appears, similar to the non-weighted case $\sigma=0$. Let us thus introduce the exponent
\begin{equation}\label{Lepin}
p_L(\sigma):=\left\{\begin{array}{ll}\frac{(2-\sigma)(N+\sigma-4)}{N(2-\sigma)-2(10-\sigma)}, & {\rm if} \ N>\frac{2(10-\sigma)}{2-\sigma}, \\[1mm] +\infty, & {\rm if} \ N\leq\frac{2(10-\sigma)}{2-\sigma},\end{array}\right.
\end{equation}
noticing that $p_L(0)=p_L$ according to \eqref{Lepin.hom} and that
$$
\frac{2(10-\sigma)}{2-\sigma}-10-4\sigma=\frac{4\sigma^2}{2-\sigma}>0,
$$
hence the lower bound for the dimension in \eqref{Lepin} is compatible with the one for the lower exponent $p_{JL}(\sigma)$ defined in \eqref{pJL}. Moreover, we shall see that $p_{L}(\sigma)>p_{JL}(\sigma)$ whenever $p_L(\sigma)<\infty$.
\begin{theorem}\label{th.small}
Let $N$, $p$ satisfying \eqref{range.exp2} and $\sigma\in(0,2)$. Then, if $p_{JL}(\sigma)<p<p_L(\sigma)$, there exists at least one self-similar solution to Eq. \eqref{eq1} in backward form \eqref{backwardSS} such that its profile satisfies \eqref{decay.peak}.
\end{theorem}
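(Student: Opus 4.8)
The plan is to prove Theorem \ref{th.small} by the same shooting/phase-plane strategy that underlies Theorem \ref{th.large}, but now tracking carefully the role of the exponent $p_L(\sigma)$ as the threshold below which a bounded, connecting profile can be captured. First I would set up the shooting problem for the ODE \eqref{SSODE} from the origin: for each initial value $a=f(0)>0$ with $f'(0)=0$, standard ODE theory gives a unique local solution $f(\cdot;a)$, and I would establish continuous dependence on $a$ together with the basic qualitative dichotomy (the solution either stays positive and decays, or vanishes at some finite $\xi$, or blows up). The natural target is a profile satisfying the three conditions in \eqref{decay.peak}, so I would characterize the two ``bad'' behaviors that bracket it: for small $a$ the profile should decay with the fast (Gaussian-type) rate coming from the linear drift terms, whereas for large $a$ the profile should either cross zero or approach the singular stationary solution $U$ from \eqref{stat.sol}. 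An intermediate-value/connectedness argument on the shooting parameter $a$ then yields a profile with the intermediate algebraic decay $\xi^{-(\sigma+2)/(p-1)}\to K$.

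The next key step is the linearization around the singular stationary solution \eqref{stat.sol}, which is where the exponents $p_{JL}(\sigma)$ and $p_L(\sigma)$ enter. Writing $f(\xi)=U(\xi)(1+\varphi)$ and passing to a logarithmic variable $s=\log\xi$, the equation for $\varphi$ becomes an autonomous (asymptotically) linear ODE whose indicial roots govern whether $U$ is approached monotonically, oscillatorily, or not at all. The condition $p>p_{JL}(\sigma)$ guarantees (by \eqref{pJL}) that the relevant roots are real, so that $U$ is approached without oscillation; this is precisely what forces the algebraic tail in \eqref{decay.peak} to have a definite sign and rules out the infinitely-many-intersections scenario that produced multiplicity in the range below $p_{JL}(\sigma)$. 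The role of $p_L(\sigma)$ is more delicate: I expect it to emerge from a monotonicity/Sturm-type comparison or from an energy (Pohozaev-type) functional evaluated along the flow, where the sign of a coefficient built from $N$, $\sigma$, and $p$ changes exactly at $p=p_L(\sigma)$ as defined in \eqref{Lepin}. For $p<p_L(\sigma)$ this sign should guarantee that the large-$a$ trajectories genuinely cross zero (rather than escaping to $+\infty$), thereby keeping the ``crossing'' and ``decaying'' regimes separated by a nonempty intermediate set.

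The main obstacle, I anticipate, will be the large-$a$ analysis and the precise identification of $p_L(\sigma)$ as the sharp threshold. Proving that for $p<p_L(\sigma)$ a sufficiently large shooting value $a$ produces a profile that vanishes at some finite $\xi$ requires quantitative control of the solution on the region where the weighted reaction $\xi^{\sigma}f^p$ dominates the linear drift, and this is exactly where the interplay between the weight exponent $\sigma$ and the supercriticality $p>p_{JL}(\sigma)$ is most subtle. I would handle this by constructing an explicit barrier or by rescaling near the origin to reduce to a limiting Emden--Fowler-type equation whose zeros can be located, and then transferring that information back through continuous dependence. A secondary technical point is to verify that the limit $K$ in \eqref{decay.peak} lands strictly inside $(0,C(\sigma))$, which should follow once the approach to $U$ is shown to be ``from below'' along the selected trajectory; this is consistent with the peaking-solution interpretation discussed after Theorem \ref{th.large}.
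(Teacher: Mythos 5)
Your proposal has the right general flavor (shooting plus a connectedness argument, with a linearization around the singular stationary solution), but it misses the one idea that actually produces the exponent $p_L(\sigma)$, and the bracketing dichotomy you propose is not the correct one. In the paper, $p_L(\sigma)$ does \emph{not} arise from an energy/Pohozaev functional or from a Sturm comparison controlling whether large shooting values produce a zero. It arises from a zero-counting theorem of Lepin for the linearized equation around the stationary solution: setting $f(\xi)=\xi^{-(\sigma+2)/(p-1)}g(\xi)$, $s=\ln\xi$, and linearizing \eqref{ODE2} at $g\equiv C(\sigma)$ gives \eqref{ODE.linear}, which contains the term $-\tfrac12 e^{2s}y'(s)$. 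Contrary to your description, this equation is \emph{not} asymptotically autonomous as $s\to+\infty$; the exponentially growing drift is essential, and Lepin's result \cite{Le89} counts the zeros of the solution satisfying $y(s)\to K>0$ as $s\to\infty$: it has exactly $j+2$ zeros when $4(j+1)<A-\sqrt{A^2-4B}\le 4(j+2)$, with $A,B$ as in \eqref{coefs}. The inequality $A-\sqrt{A^2-4B}>8$ (at least three intersections with the stationary solution) is, after elementary algebra, exactly the condition $p<p_L(\sigma)$ when $\sigma\in(0,2)$ and $N>2(10-\sigma)/(2-\sigma)$; this computation is the definition of \eqref{Lepin}, and nothing in your proposed machinery (barriers, Emden--Fowler rescaling near the origin) would reproduce it. The real indicial roots for $p>p_{JL}(\sigma)$ only govern the autonomous limit $s\to-\infty$ (i.e.\ $\xi\to0$, the eigenvalues at the point $P_2$), not the tail.

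Consequently the topological argument must separate trajectories by their \emph{number of intersections with the stationary solution} \eqref{stat.sol} (in the phase space of \eqref{PSsyst}, crossings of the plane $\{Z=Z_0\}$), not by ``vanishing versus decaying''. Your proposed endpoints fail concretely: for small $a=f(0)$ the profile does \emph{not} decay at a Gaussian rate --- the reaction is initially negligible, the profile follows the growing Kummer-type solution of the linear part (of order $e^{\xi^2/4}$) until the reaction turns it around, and it then vanishes at a finite $\xi_0$; and for $p>p_L(\sigma)$ (where non-existence is expected) profiles at \emph{both} ends of the shooting parameter vanish, so an intermediate-value argument between ``vanishing'' and ``decaying'' has no reason to produce anything. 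The paper instead shoots \emph{backward} from the tail: it parametrizes the trajectories $\mathcal{C}_k$ entering the point at infinity $Q_1$ by the limit $k=\lim_{\eta\to\infty}Z(\eta)\in(0,Z_0)$, shows via Lepin's count that for $k$ close to $Z_0$ the trajectory crosses $\{Z=Z_0\}$ at least three times, shows via tubular-neighborhood arguments along the invariant planes (Lemmas \ref{lem.X0} and \ref{lem.Z0}) that for $k$ close to $0$ it crosses exactly twice and emanates from $Q_2$, and then extracts an intermediate $k_1$ whose trajectory, after excluding $P_1$, $P_2$ and $Q_4$ as possible $\alpha$-limits, must connect $P_0$ to $Q_1$; this connection is precisely a profile satisfying \eqref{decay.peak}. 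A forward shooting from the origin can be made to work in a related problem (the paper does this for $\sigma\in(-2,0)$ in Theorem \ref{th.neg}), but even there the threshold exponent enters through the same Lepin zero-count (Lemma \ref{lem.cross}), and for $\sigma>0$ the initial increase of the profiles ($f''(0)>0$) breaks the strip argument you would need. So the gap is genuine: without the zero-counting mechanism and the intersection-number classification, the proposal cannot identify $p_L(\sigma)$ nor close the existence argument.
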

We can observe that Theorem \ref{th.small} is a direct generalization to $\sigma\in(0,2)$ of the existence results valid for $\sigma=0$ established in \cite{Le90} if $p_{JL}<p<p_L$. We stress here once more that the self-similar solutions obtained in Theorem \ref{th.small} are expected to be peaking solutions, in the sense of continuation after the blow-up time explained in \cite{GV97}.

\medskip 

\noindent \textbf{Remark.} We can give an alternative form of the outcome of Theorems \ref{th.large} and \ref{th.small} by expressing $\sigma$ in terms of $N$. Indeed, if $N>10$ and
\begin{equation}\label{sigmaN}
\sigma\geq\frac{2(N-10)}{N-2},
\end{equation}
then there exists at least one self-similar solution in the backward form \eqref{backwardSS}. On the contrary, if the inequality sign is reversed in \eqref{sigmaN}, then the existence is limited to the interval $p\in(p_{JL}(\sigma),p_{L}(\sigma))$. Note that \eqref{sigmaN} is fulfilled for any $\sigma\geq2$. We observe that the previous approach produces existence of solutions also for negative values of $\sigma$, more precisely, when
$$
-\frac{2(N-10)}{N+6}<\sigma<0, \quad p_{JL}(\sigma)<p<p_L(\sigma),
$$
but the above condition is far from optimal for $\sigma<0$ and it will be improved in the forthcoming Theorem \ref{th.neg}.

\medskip

We are left with the negative range $\sigma\in(-2,0)$, featuring different properties from the previous ones, since in this case the weight $|x|^{\sigma}$ becomes singular at the origin instead of unbounded as $|x|\to\infty$. Similarly as in the previous case, let us introduce the following Lepin-type exponent
\begin{equation}\label{Lepin.neg}
\overline{p_L}(\sigma):=\left\{\begin{array}{ll}\frac{\sigma(N-2+\sigma)}{\sigma(N-2)+4}, & {\rm if} \ N>\frac{2\sigma-4}{\sigma},\\
\\[1mm] +\infty, & {\rm if} \ N\leq\frac{2\sigma-4}{\sigma},\end{array}\right.
\end{equation}
observing that
$$
\frac{2\sigma-4}{\sigma}-10-4\sigma=-\frac{4(\sigma+1)^2}{\sigma}>0.
$$
Moreover, we shall see that $\overline{p_L}(\sigma)>p_{JL}(\sigma)$ whenever $\overline{p_L}(\sigma)<\infty$. For $\sigma\in(-2,0)$ we prove the existence of a non-peaking self-similar solution with a decreasing profile for any exponent $p$ satisfying $p_{JL}(\sigma)<p<\overline{p_L}(\sigma)$, extending thus the result established for $p<p_{JL}(\sigma)$ in \cite{FT00}.
\begin{theorem}\label{th.neg}
Let $N$, $p$ satisfying \eqref{range.exp2} and $\sigma\in(-2,0)$ be such that
\begin{equation}\label{interm13}
N\geq\frac{2(2-\sigma)}{\sigma+2}.
\end{equation}
Then, if $p_{JL}(\sigma)<p<\overline{p_L}(\sigma)$, there exists at least one self-similar solution to Eq. \eqref{eq1} in backward form \eqref{backwardSS} having a decreasing profile $f$ and such that
\begin{equation}\label{decay.neg}
f(0)>0, \quad \lim\limits_{\xi\to\infty}\xi^{(\sigma+2)/(p-1)}f(\xi)=K\in(C(\sigma),\infty).
\end{equation}
\end{theorem}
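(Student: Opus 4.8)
The plan is to construct the profile by a shooting method in the parameter $a:=f(0)>0$, studying the resulting one-parameter family of solutions of \eqref{SSODE}. Writing $\gamma:=(\sigma+2)/(p-1)>0$, I first record the useful observation that the function $\phi(\xi):=C(\sigma)\xi^{-\gamma}$, i.e. the profile of the singular stationary solution \eqref{stat.sol}, is itself an exact (singular) solution of \eqref{SSODE}: the self-similar terms $-\frac12\xi\phi'-\alpha\phi$ cancel because $\alpha=\gamma/2$, while the remaining terms vanish precisely when $C(\sigma)^{p-1}=\gamma(N-2-\gamma)$, which is exactly the constant in \eqref{stat.sol}. Since $\sigma<0$, the reaction term $\xi^{\sigma}f^{p}$ is singular at the origin, so I would begin with the local analysis near $\xi=0$: the initial value $f(0)=a$ forces the expansion
\begin{equation*}
f(\xi)=a-\frac{a^{p}}{(\sigma+2)(\sigma+N)}\,\xi^{\sigma+2}+o(\xi^{\sigma+2}),\qquad \xi\to0^{+},
\end{equation*}
which in particular yields $f'(0)=0$ automatically. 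A contraction argument in a suitable weighted space then gives local existence, uniqueness and continuous dependence of the solution $f_{a}$ on $a$, despite the singular coefficient.

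With $f_{a}$ at hand I would set up the global dichotomy. Define $\mathcal A:=\{a>0:\ f_{a}\text{ vanishes at some finite }\xi\}$ and $\mathcal B:=\{a>0:\ f_{a}>0\text{ on }(0,\infty)\text{ and }f_{a}\text{ is unbounded}\}$. For large $a$ the coefficient in the expansion above is large, so $f_{a}$ is driven down and crosses zero at a small radius, whence $a\in\mathcal A$, and transversal vanishing is an open condition. For small $a$ the reaction is negligible and $f_{a}$ stays close to the solution of the linear problem $f''+\frac{N-1}{\xi}f'-\frac12\xi f'-\alpha f=0$ with $f(0)=a$, $f'(0)=0$; testing this linear equation against the weight $\xi^{N-1}e^{-\xi^{2}/4}$ shows its solution is positive, increasing and grows like $e^{\xi^{2}/4}$, so $f_{a}$ is positive and unbounded, i.e. $a\in\mathcal B$, again an open condition. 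As $\mathcal A,\mathcal B$ are disjoint, open and nonempty and $(0,\infty)$ is connected, there is $a^{\ast}\notin\mathcal A\cup\mathcal B$. The corresponding $f_{a^{\ast}}$ is positive on $(0,\infty)$ — it cannot touch zero with vanishing derivative, by uniqueness — and bounded; since the only bounded behaviour at infinity compatible with \eqref{SSODE} is the slow-decaying one, I obtain $\xi^{\gamma}f_{a^{\ast}}(\xi)\to K$, and the nonlinear correction being of lower order rules out $K=0$, giving $\lim_{\xi\to\infty}\xi^{(\sigma+2)/(p-1)}f_{a^{\ast}}(\xi)=K\in(0,\infty)$.

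It remains to check the two qualitative properties in \eqref{decay.neg}, and this is where the window $p_{JL}(\sigma)<p<\overline{p_L}(\sigma)$ enters. Passing to the Emden--Fowler variable $w(t):=\xi^{\gamma}f(\xi)$, $t=\ln\xi$, the leading (elliptic) part of \eqref{SSODE} becomes autonomous, with equilibrium $w\equiv C(\sigma)$ corresponding to $\phi$; the hypothesis $p>p_{JL}(\sigma)$ guarantees that the linearization there has real eigenvalues, so trajectories approach the equilibrium monotonically rather than spiralling. This non-oscillation, together with the initial decrease of $f_{a^{\ast}}$ and the exclusion of interior minima via the sign of $f''$ read off \eqref{SSODE}, yields that the profile is decreasing. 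To obtain $K>C(\sigma)$ — i.e. that $f_{a^{\ast}}$ lies asymptotically above the singular solution $\phi$, the signature of complete (non-peaking) blow-up — I would introduce a Lepin-type monotonicity functional along the trajectory, a Pohozaev/energy quantity built from $f$, $\xi f'$ and $\xi^{\sigma}f^{p}$, whose derivative has a definite sign precisely when $p<\overline{p_L}(\sigma)$; the dimensional restriction \eqref{interm13} enters here to keep the relevant coefficient of this functional of the correct sign. Comparing its boundary values with those of $\phi$ then forces $f_{a^{\ast}}>\phi$ for large $\xi$, hence $K>C(\sigma)$.

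The main obstacle is this last step. The shooting argument delivers a positive, bounded, slow-decaying profile quite robustly, but distinguishing the regime $K>C(\sigma)$ both from the peaking regime $K<C(\sigma)$ and from the exceptional connection $K=C(\sigma)$ is delicate: it is exactly the point at which the exponent $\overline{p_L}(\sigma)$ is forced upon us, and it requires building the right monotone functional and controlling its sign throughout $(0,\infty)$ for $p_{JL}(\sigma)<p<\overline{p_L}(\sigma)$. Proving the global monotonicity of $f_{a^{\ast}}$, by ruling out oscillations near $\phi$, is tied to the same analysis and rests essentially on $p>p_{JL}(\sigma)$.
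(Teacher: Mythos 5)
Your shooting parameter is essentially the same as the paper's (the trajectories $l_C$ on the unstable manifold of $P_0$ are parametrized by $C\propto f(0)^{p-1}$, and the paper also runs a three-set openness/connectedness argument, see \eqref{sets.neg}); your expansion at the origin is correct (it agrees with \eqref{exp.neg}) and your small-$a$ step matches the paper's proof that small parameters lie in the set $\mathcal{C}$. The fatal gap is the claim that large $a$ forces $f_a$ to vanish. This is precisely the non-emptiness of the ``vanishing'' set, which is the hard core of the theorem, and it cannot follow from the local dip you describe: rescaling $f_a(\xi)=a\,g_a\bigl(a^{(p-1)/(\sigma+2)}\xi\bigr)$, one sees that for large $a$ the profile is, near the origin, a regular radial solution of the \emph{elliptic} Hardy--H\'enon equation $\Delta U+|x|^{\sigma}U^{p}=0$, and for $p>p_{JL}(\sigma)$ such solutions are positive, decreasing, and approach the singular solution \eqref{stat.sol} from below \emph{without ever intersecting it}, let alone vanishing. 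Consequently, by the time the self-similar terms of \eqref{SSODE} become relevant ($\xi\sim 1$), the large-$a$ profile hugs the singular solution, and whether it then dips down to a zero or turns around and grows is decided by the oscillation count of the linearized equation \eqref{ODE.linear} --- that is, exactly by whether $p<\overline{p_L}(\sigma)$ (with \eqref{interm13}), via Lemma \ref{lem.cross}. Your existence argument never uses these hypotheses; if it were correct as stated, it would produce a positive, bounded, slowly decaying profile for \emph{every} $p>p_{JL}(\sigma)$, contradicting the expected sharpness of $\overline{p_L}(\sigma)$ (Figure \ref{fig6} indicates that for $p>\overline{p_L}(\sigma)$ all trajectories turn upward, i.e.\ your set $\mathcal{A}$ is empty) and, in the limiting case $\sigma=0$, Mizoguchi's non-existence theorem for $p\geq p_L$ \cite{Mi04,Mi09,Mi10}. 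Note that even the paper does not prove ``large $a\Rightarrow$ vanishing'' directly: it proves non-emptiness indirectly, assuming all trajectories stay in the strip and turn upward, building a chain of tubular neighborhoods around the line \eqref{stat.sol.plane}, the orbit $l_{\infty}$ and the $Y$-axis, and contradicting the crossing forced by Lemma \ref{lem.cross}.

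Two further points. First, your set $\mathcal{B}$ (global positivity plus unboundedness) is not obviously open in $a$; it should be defined by a transversal event, as the paper does ($Y(\eta_0)=0$, $Y'(\eta_0)>0$, i.e.\ $f_a$ reaches a strict local minimum), with tangencies excluded as in Lemma \ref{lem.notangent}. Second, your final step inverts the role of the critical exponent. Once the threshold profile is known to satisfy $-(\sigma+2)/(p-1)<\xi f'(\xi)/f(\xi)<0$ for all $\xi$ --- which is how the paper characterizes the intermediate set, cf.\ Lemma \ref{lem.strip} --- monotonicity is immediate, and the inequality $K>C(\sigma)$ costs nothing: by the center-manifold equation \eqref{cmf}, a trajectory entering $Q_1$ with $\lim_{\eta\to\infty}Z(\eta)=k<Z_0$ necessarily approaches from the half-space $Y<-(\sigma+2)/(p-1)$, so a trajectory confined to $Y>-(\sigma+2)/(p-1)$ must have $k\geq Z_0$, and $k=Z_0$ is excluded because that trajectory is the line \eqref{stat.sol.plane} itself. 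No Pohozaev-type monotone functional is needed there (and you have not constructed the one you invoke); the window $p_{JL}(\sigma)<p<\overline{p_L}(\sigma)$ is what existence itself hinges on, not the tail properties \eqref{decay.neg}.
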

A simple calculation gives that, for $\sigma\geq-1$, the condition \eqref{interm13} is automatically implied by the condition $N>10+4\sigma$ contained in the definition of $p_{JL}(\sigma)$, since
$$
10+4\sigma-\frac{2(2-\sigma)}{\sigma+2}=\frac{4(\sigma+1)(\sigma+4)}{\sigma+2}\geq0.
$$
For $\sigma\in(-2,-1)$, we have
$$
\frac{2(2-\sigma)}{\sigma+2}-\frac{2\sigma-4}{\sigma}=\frac{4(2-\sigma)(\sigma+1)}{\sigma(\sigma+2)}>0,
$$
hence the condition \eqref{interm13} directly implies that the exponent $\overline{p_L}(\sigma)$ is finite. Observe also that the limit as $\xi\to\infty$ in \eqref{decay.neg} is now bigger than the constant $C(\sigma)$ of the stationary solution \eqref{stat.sol}, a fact that should induce complete blow-up as $t\to T$. Another difference with respect to the range $\sigma>0$ is that the profiles of the solutions existing for $\sigma<0$ are decreasing, while the profiles of the solutions existing for $\sigma>0$ have at least a strict maximum at a positive value of $\xi$, as one can deduce from the fact that $f(0)>0$, $f'(0)=0$ and \eqref{SSODE} imply that
$$
f''(0)=\frac{\sigma+2}{2N(p-1)}f(0)>0.
$$
Finally, if $\sigma\in(-2,0)$, the $C^2$-regularity at $x=0$ of the self-similar solutions, obviously valid when $\sigma>0$, might be lost and the optimal regularity of the self-similar solutions considered in Theorem \ref{th.neg} relies on suitable Sobolev or H\"older spaces, still sufficient for a weak solution, as explained in \cite[Theorem 1.2 and Section 3.3]{IL25b}.

\medskip

\noindent \textbf{Discussion about non-existence.} Theorems \ref{th.small} and \ref{th.neg} left out the range of exponents $p\geq p_L(\sigma)$ when $\sigma\in(0,2)$ and $p_L(\sigma)$ defined in \eqref{Lepin}, respectively $p\geq \overline{p_L}(\sigma)$ when $\sigma\in(-2,0)$ and $\overline{p_L}(\sigma)$ defined in \eqref{Lepin.neg}. Our strong expectation is that non-existence of self-similar solutions in the form \eqref{backwardSS} holds true in these ranges, exactly as it occurs for $\sigma=0$ and $p\geq p_L$, as proved in \cite{Mi04, Mi09, Mi10}. We show some numerical experiments (together with their interpretation) enforcing this idea that our range of existence is sharp for both $\sigma\in(0,2)$ and $\sigma\in(-2,0)$. A possible approach to proving such non-existence results is to try to adapt to Eq. \eqref{eq1} the technique employed in \cite{Mi09} for the simpler equation when $\sigma=0$, noticing that a part of the (numerous) preparatory lemmas therein can be proved in an alternative way by employing our phase-space technique. However, the variable coefficient $|x|^{\sigma}$ might introduce a lot of technical difficulties in establishing analogous estimates to the ones in \cite{Mi09}. In order to keep this paper reasonably short, we refrain from addressing the question of proving non-existence results in this work.

\medskip

\noindent \textbf{Structure of the paper.} We start with a rather technical Section \ref{sec.syst}, where an alternative formulation of the differential equation \eqref{SSODE} by a transformation to an autonomous dynamical system is introduced, and its equilibrium points (both finite and at infinity) are analyzed. Some preparatory results for the global analysis of the dynamical system are also given at the end of Section \ref{sec.syst}. The next Section \ref{sec.large} is devoted to the proof of Theorems \ref{th.large} and \ref{th.small}, both based on a backward shooting technique (that is, a shooting in the opposite sense of the flow, starting from the tail behavior) performed in the alternative formulation introduced in Section \ref{sec.syst}. We mention here that the main calculation allowing the reader to foresee why existence holds true always if $\sigma\geq2$ and how the exponent $p_L(\sigma)$ arises, is given in the first part of Section \ref{sec.large} and can be followed directly, even if one skips the dynamical system in Section \ref{sec.syst}. The last section of the paper, Section \ref{sec.neg}, deals with the interval $\sigma\in(-2,0)$ and proves Theorem \ref{th.neg}, employing this time a direct shooting technique (different from the proofs in Section \ref{sec.large}). Some numerical experiments confirming and visually supporting the proofs of the main results and at the same time suggesting the sharpness of our existence results in Theorems \ref{th.small} and \ref{th.neg} are given at the end of the corresponding proofs.

\section{An autonomous dynamical system}\label{sec.syst}

The proofs of the main results follow from a combination of techniques working directly on the differential equation \eqref{SSODE} and on an alternative, transformed formulation of it in the form of an autonomous dynamical system. Thus, before going to the proofs, in this preparatory section we introduce and analyze the above mentioned dynamical system. We recall that we assume, throughout the paper, that $N\geq3$ and $p>p_{JL}(\sigma)$.

Starting from \eqref{SSODE}, we thus perform the following change of variable
\begin{equation}\label{PSchange}
X(\eta):=\frac{\sigma+2}{2(p-1)}\xi^2, \quad Y(\eta):=\frac{\xi f'(\xi)}{f(\xi)}, \quad Z(\eta):=\xi^{\sigma+2}f(\xi)^{p-1},
\end{equation}
where the new independent variable $\eta$ is given by $\eta:=\ln\,\xi$. Thus, the interval $\xi\in(0,\infty)$ is mapped into $\eta\in\real$. We obtain by direct algebraic manipulations that the new variables $(X,Y,Z)(\eta)$ solve the following autonomous differential system
\begin{equation}\label{PSsyst}
\left\{\begin{array}{ll}X'=2X, \\ Y'=X-(N-2)Y-Z-Y^2+\frac{p-1}{\sigma+2}XY, \\ Z'=Z(\sigma+2+(p-1)Y),\end{array}\right.
\end{equation}
where the primes indicate derivatives with respect to the variable $\eta$. Let us remark that the system \eqref{PSsyst} is also derived by letting $m=1$ in the dynamical system employed with success in \cite[Section 2]{IS25a}, and thus some technical parts in the forthcoming analysis that are identical as in the quoted reference will be only sketched here. We also observe that we are only interested in the open region of the phase space
$$
(X,Y,Z)\in\mathcal{R}_+:=(0,\infty)\times\real\times(0,\infty),
$$
and that the planes $\{X=0\}$ and $\{Z=0\}$ limiting $\mathcal{R}_+$ are invariant for the system \eqref{PSsyst}.

\subsection{Critical points}\label{subsec.crit}

The system \eqref{PSsyst} has three critical points:
$$
P_0=(0,0,0), \quad P_1=(0,-(N-2),0), \quad P_2=\left(0,-\frac{\sigma+2}{p-1},Z_0\right),
$$
where
\begin{equation}\label{Z0}
Z_0:=\frac{(\sigma+2)[p(N-2)-(N+\sigma)]}{(p-1)^2},
\end{equation}
and the local analysis of the flow of the system in a neighborhood of them is given below.
\begin{lemma}\label{lem.P0}
The critical point $P_0$ is a saddle point with a one-dimensional stable manifold included in the $Y$-axis and a two-dimensional unstable manifold. The trajectories contained in the unstable manifold enter the region $\mathcal{R}_+$ and represent, after undoing the change of variable \eqref{PSchange}, profiles $f$ such that $f(0)>0$ and, if $\sigma>0$, then also $f'(0)=0$.
\end{lemma}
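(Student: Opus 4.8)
The plan is to linearize the system \eqref{PSsyst} at $P_0$, read off the stable/unstable dimensions from the eigenvalues, use the invariant coordinate axis and planes to pin down the manifolds exactly, and finally translate the local flow back to the profile $f$ via the change of variables \eqref{PSchange}. First I would compute the Jacobian of the right-hand side of \eqref{PSsyst} at $P_0=(0,0,0)$: since the $X$-equation is already linear and the quadratic terms $Y^2$, $XY$, $YZ$ vanish at the origin together with their contributions to the other diagonal entries, the linearization is triangular and its characteristic polynomial factors as $(\lambda-2)(\lambda+N-2)(\lambda-\sigma-2)=0$. Hence the eigenvalues are $\lambda_1=2$, $\lambda_2=-(N-2)$ and $\lambda_3=\sigma+2$, with eigenvectors that I would compute explicitly as $(0,1,0)$ for $\lambda_2$, $(N,1,0)$ for $\lambda_1$, and $(0,1,-(N+\sigma))$ for $\lambda_3$ (the eigenvalues being distinct for $\sigma\neq0$, while at $\sigma=0$ the matrix stays diagonalizable). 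Under the standing hypotheses $N\geq3$ and $\sigma>-2$ we have $\lambda_2<0<\min\{\lambda_1,\lambda_3\}$, so $P_0$ is hyperbolic with a one-dimensional stable and a two-dimensional unstable manifold, which already gives the saddle structure. The stable eigenvector points along the $Y$-axis; since the $Y$-axis $=\{X=0\}\cap\{Z=0\}$ is invariant for \eqref{PSsyst}, the reduced flow there being $Y'=-Y(N-2+Y)$, the one-dimensional local stable manifold must coincide with a piece of the $Y$-axis.

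Next I would locate the unstable manifold inside $\mathcal{R}_+$. The unstable eigenspace is spanned by $(N,1,0)$ and $(0,1,-(N+\sigma))$; the first lies in the invariant plane $\{Z=0\}$ and the second in the invariant plane $\{X=0\}$. Writing an orbit on $W^u(P_0)$ as $c_1(N,1,0)e^{2\eta}+c_3(0,1,-(N+\sigma))e^{(\sigma+2)\eta}+\text{(higher order)}$ as $\eta\to-\infty$, one reads off $X\sim c_1Ne^{2\eta}$ and $Z\sim -c_3(N+\sigma)e^{(\sigma+2)\eta}$. Since $N+\sigma>0$, choosing $c_1>0$ and $c_3<0$, an open set of directions, yields $X>0$ and $Z>0$ near $P_0$, so these orbits enter $\mathcal{R}_+$; by invariance of the two coordinate planes they remain there.

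Finally I would undo \eqref{PSchange}. As $\eta=\ln\xi$, approaching $P_0$ along $W^u(P_0)$ as $\eta\to-\infty$ corresponds to $\xi\to0^+$, and on the unstable manifold $Y$ decays like $e^{\lambda\eta}$ with $\lambda=\min\{2,\sigma+2\}>0$, so $\int_{-\infty}Y\,d\eta$ converges. Because $Y=\xi f'/f=d(\ln f)/d\eta$, this integral equals $\ln f(\xi)-\ln f(\xi_0)$, whence $\ln f$ has a finite limit and $f(0)\in(0,\infty)$. When $\sigma>0$ the mode with the smaller exponent $\lambda_1=2$ dominates, so $Y\sim c_1\xi^2$ and $f'(\xi)=f(\xi)\,Y/\xi\sim c_1\xi\,f(0)\to0$, giving $f'(0)=0$; matching $X=\frac{\sigma+2}{2(p-1)}\xi^2$ with $X\sim c_1N\xi^2$ also forces $c_1>0$, consistent with entering $\mathcal{R}_+$.

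The main obstacle, and the only step beyond routine linear algebra, is passing from the mere tangency of $W^u(P_0)$ to its eigenspace, guaranteed by the stable manifold theorem, to the sharp leading-order asymptotics of $(X,Y,Z)$ that I need to conclude $f(0)>0$ and $f'(0)=0$. For the convergence $f(0)\in(0,\infty)$ only the exponential smallness $Y=O(e^{\lambda\eta})$ is required, which follows directly from the $C^1$ manifold; for $f'(0)=0$ I additionally need that the $\lambda_1=2$ mode genuinely dominates the $Y$-component when $\sigma>0$, and since $\lambda_1<\lambda_3$ and the eigenvalues are non-resonant there, a linearization/normal-form argument secures the required rate. These are precisely the technical points I would take over from the analogous computation in \cite{IS25a} rather than reproduce in full.
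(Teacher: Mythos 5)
Your proposal is correct and follows essentially the same route as the paper: linearize at $P_0$ to get the eigenvalues $2$, $-(N-2)$, $\sigma+2$ with the same eigenvectors, use invariance of the $Y$-axis plus uniqueness of the stable manifold, and undo \eqref{PSchange} via the exponential decay of $Y=\frac{d}{d\eta}\ln f$ as $\eta\to-\infty$ to get $f(0)\in(0,\infty)$, with the dominance of the $e^{2\eta}$ mode (equivalently the paper's dominant balance $Y\sim X/N$) giving $f'(0)=0$ for $\sigma>0$. One parenthetical correction: your non-resonance claim can fail (e.g.\ $\sigma=2$ gives $\lambda_3=2\lambda_1$), but it is also unnecessary, since the standard unstable-manifold bound $Y=O(e^{\lambda\eta})$ for any $\lambda<\min\{2,\sigma+2\}$ already forces $f'(\xi)=f(\xi)Y/\xi\to0$ when $\sigma>0$.
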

\begin{proof}
The linearization of the system \eqref{PSsyst} in a neighborhood of the origin has the matrix
$$
M(P_0)=\left(
         \begin{array}{ccc}
           2 & 0 & 0 \\
           1 & -(N-2) & -1 \\
           0 & 0 & \sigma+2 \\
         \end{array}
       \right),
$$
with two positive eigenvalues $\lambda_1=2$, $\lambda_3=\sigma+2$ and one negative eigenvalue $\lambda_2=-(N-2)$. Since the eigenvector $e_2=(0,1,0)$ corresponding to the eigenvalue $\lambda_2$ is contained in the $Y$-axis, which is invariant for the system \eqref{PSsyst}, the uniqueness of the stable manifold \cite[Theorem 3.2.1]{GH} gives that it is contained completely in this $Y$-axis. On the contrary, the unstable manifold is tangent to the plane spanned by the two eigenvectors
\begin{equation}\label{eigen.P0}
e_1=(N,1,0), \qquad e_3=(0,1,-(N+\sigma)).
\end{equation}
It readily follows from the first and third equations of the system \eqref{PSsyst} that
\begin{equation}\label{interm1}
\lim\limits_{\eta\to-\infty}\frac{Z(\eta)}{X(\eta)^{(\sigma+2)/2}}=C\in(0,\infty),
\end{equation}
and thus $f(0)>0$ by undoing the change of variable \eqref{PSchange} on \eqref{interm1}. Moreover, \eqref{interm1} implies in particular that either $Z(\eta)$ or $X(\eta)$ dominate as $\eta\to-\infty$, depending on $\sigma$, as follows:
\begin{equation}\label{interm3}
\lim\limits_{\eta\to-\infty}\frac{Z(\eta)}{X(\eta)}=\left\{\begin{array}{ll}0, & {\rm if} \ \sigma>0,\\ +\infty, & {\rm if} \ \sigma<0.\end{array}\right.
\end{equation}
We then deduce from \eqref{interm3} that, for $\sigma>0$, the second equation of the system \eqref{PSsyst} reads
$$
Y'(\eta)=X(\eta)-(N-2)Y(\eta)+o(|(X,Y)(\eta)|), \quad {\rm as} \ \eta\to-\infty,
$$
while for $\sigma<0$ it reads
$$
Y'(\eta)=-(N-2)Y(\eta)-Z(\eta)+o(|(X,Y)(\eta)|), \quad {\rm as} \ \eta\to-\infty.
$$
The previous approximations lead to the following local expansions as $\eta\to-\infty$:
\begin{equation}\label{interm2}
Y(\eta)\sim\left\{\begin{array}{ll}\frac{X(\eta)}{N}, & {\rm if} \ \sigma>0, \\[1mm] \frac{-Z(\eta)}{N+\sigma}, & {\rm if} \ \sigma\in(-2,0),\end{array}\right.
\end{equation}
and the local behavior up to a second order is readily deduced from \eqref{interm2} by undoing the change of variable \eqref{PSchange}. In particular, for $\sigma>0$, we get from \eqref{interm2} that
\begin{equation}\label{interm4}
\lim\limits_{\xi\to0}\frac{f'(\xi)}{\xi f(\xi)}=\frac{N(\sigma+2)}{2(p-1)},
\end{equation}
which gives $f'(0)=0$, completing the proof.
\end{proof}

\noindent \textbf{Remark.} Let us notice that we can obtain a more precise representation of the unstable manifold of $P_0$. Fixing $C>0$ from \eqref{interm1}, keeping only the dominating terms in a first approximation in the second equation of \eqref{PSsyst}, and replacing $Z(\eta)$ from \eqref{interm1}, we approximate the system by
$$
X'=2X, \quad Y'=X-(N-2)Y-CX^{(\sigma+2)/2}.
$$
By integrating this system, we deduce that the trajectories contained in the unstable manifold of $P_0$ have the local expansion
\begin{equation}\label{lC}
Y(\eta)\sim\frac{1}{N}X(\eta)-\frac{C}{N+\sigma}X^{(\sigma+2)/2}(\eta)=\frac{X(\eta)}{N}-\frac{Z(\eta)}{N+\sigma}, \quad {\rm as} \ \eta\to-\infty.
\end{equation}
The Hartman-Grobman Theorem implies readily that, for any given $C\in(0,\infty)$, there is a unique trajectory satisfying \eqref{interm1} and \eqref{lC} with the given constant $C$, and this trajectory will be denoted by $l_C$ in the sequel. By convention, we denote by $l_0$ the trajectory contained in the unstable manifold of $P_0$ and in the invariant plane $\{Z=0\}$ and by $l_{\infty}$ the trajectory contained in the unstable manifold of $P_0$ and in the invariant plane $\{X=0\}$.

An integration over $(0,\xi)$ in \eqref{interm4} (with $\xi>0$ small) leads to a more precise local expansion as $\xi\to0$, that is
\begin{equation}\label{exp.pos}
f(\xi)\sim f(0)\exp\left[\frac{N(\sigma+2)}{4(p-1)}\xi^2\right].
\end{equation}
In a similar way (see for example \cite[Lemma 2.1]{IS25b} or \cite[Section 3.3]{IL25b}), one can derive the local expansion as $\xi\to0$ of the profiles contained in the unstable manifold of $P_0$ for $\sigma<0$; that is,
\begin{equation}\label{exp.neg}
f(\xi)\sim\left[K+\frac{p-1}{(N+\sigma)(\sigma+2)}\xi^{\sigma+2}\right]^{-1/(p-1)}, \quad K=f(0)^{1-p}\in(0,\infty).
\end{equation}

The next result gathers the local analysis near the critical points $P_1$ and $P_2$.
\begin{lemma}\label{lem.P1P2}
(a) The critical point $P_1$ is a saddle point with a two-dimensional unstable manifold contained in the invariant plane $\{Z=0\}$ and a one-dimensional stable manifold contained in the invariant plane $\{X=0\}$. No trajectories connect $P_1$ to points in the region $\mathcal{R}_+$.

(b) The critical point $P_2$ is a saddle point with a two-dimensional stable manifold contained in the invariant plane $\{X=0\}$ and a one-dimensional unstable manifold containing the stationary solution \eqref{stat.sol}.
\end{lemma}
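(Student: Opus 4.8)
The plan is to linearize the system \eqref{PSsyst} about $P_1$ and $P_2$, read off the eigenvalues and eigenvectors, and then invoke the invariance of the coordinate planes $\{X=0\}$ and $\{Z=0\}$ together with the uniqueness of the (un)stable manifolds, exactly in the spirit of the argument already rehearsed for $P_0$ in Lemma \ref{lem.P0}. First I would record the Jacobian of \eqref{PSsyst} at a general point,
$$
J(X,Y,Z)=\begin{pmatrix} 2 & 0 & 0 \\ 1+\frac{p-1}{\sigma+2}Y & -(N-2)-2Y+\frac{p-1}{\sigma+2}X & -1 \\ 0 & (p-1)Z & \sigma+2+(p-1)Y \end{pmatrix},
$$
which I will then specialize to each critical point.

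For part (a), evaluating at $P_1=(0,-(N-2),0)$ gives a matrix whose characteristic polynomial factors and whose eigenvalues are readily found to be $\lambda_1=2$, $\lambda_2=N-2$ and $\lambda_3=\sigma+2-(p-1)(N-2)$. Since $N\ge 3$, the first two are positive; the sign of $\lambda_3$ is governed by the standing hypothesis $p>p_{JL}(\sigma)$, whence $p>p_S(\sigma)$, which yields the crucial inequality $(p-1)(N-2)>2(\sigma+2)$ and hence $\lambda_3<-(\sigma+2)<0$. Thus $P_1$ is a saddle with a two-dimensional unstable and a one-dimensional stable manifold. I would then compute the eigenvectors: the one attached to $\lambda_3$ has vanishing $X$-component and so lies in the invariant plane $\{X=0\}$, while the two attached to $\lambda_1,\lambda_2$ have vanishing $Z$-component and span a plane inside the invariant plane $\{Z=0\}$ (the repeated-eigenvalue configuration $N=4$ is handled by the invariant-subspace remark, not by diagonalization). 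By invariance of these planes and uniqueness of the invariant manifolds, the unstable manifold is contained in $\{Z=0\}$ and the stable one in $\{X=0\}$. The final assertion of (a) then follows, since any orbit converging to $P_1$ in either time direction lies, near $P_1$, on one of these manifolds, and hence remains trapped in $\{X=0\}\cup\{Z=0\}=\partial\mathcal{R}_+$ for all $\eta$, never entering the open region $\mathcal{R}_+$.

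For part (b), evaluating $J$ at $P_2=(0,-\frac{\sigma+2}{p-1},Z_0)$, the choice $\sigma+2+(p-1)Y=0$ together with the identity $1+\frac{p-1}{\sigma+2}Y=0$ makes the matrix block-diagonal: an $X$-block giving $\lambda_1=2$, and a $2\times2$ block $B$ acting in the $(Y,Z)$-plane with $\mathrm{tr}\,B=-(N-2)+\frac{2(\sigma+2)}{p-1}$ and $\det B=(p-1)Z_0$. Here $Z_0>0$ because $p>p_S(\sigma)$ gives $p(N-2)>N+\sigma$, so $\det B>0$, while the same inequality $(p-1)(N-2)>2(\sigma+2)$ forces $\mathrm{tr}\,B<0$; hence both eigenvalues of $B$ have negative real part and $P_2$ is a saddle with a two-dimensional stable and a one-dimensional unstable manifold. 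The stable eigenvectors have vanishing $X$-component, placing the stable manifold in $\{X=0\}$ by invariance. The unstable eigenvector is $(1,0,0)$, and I would verify that the line $\{(X,-\frac{\sigma+2}{p-1},Z_0):X>0\}$ is invariant for \eqref{PSsyst} (both $Y'$ and $Z'$ vanish identically on it), so it is exactly the unstable manifold; undoing the change of variable \eqref{PSchange} along this line returns the profile $f(\xi)=C(\sigma)\xi^{-(\sigma+2)/(p-1)}$, that is, the stationary solution \eqref{stat.sol}.

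I do not expect a genuine obstacle, as the computation is the routine saddle-point linearization already carried out for $P_0$. The only points demanding care are the bookkeeping of signs, which all reduce to the single inequality $(p-1)(N-2)>2(\sigma+2)$ coming from $p>p_S(\sigma)$; the degenerate eigenvalue configurations (such as $N=4$ at $P_1$), which are bypassed by the invariant-subspace argument rather than by explicit diagonalization; and the short direct check that the invariant line through $P_2$ indeed corresponds, after undoing \eqref{PSchange}, to the stationary solution \eqref{stat.sol}.
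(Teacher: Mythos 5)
Your proof is correct and follows essentially the same route as the paper: linearization at $P_1$ and $P_2$, sign bookkeeping reduced to $p>p_S(\sigma)$ (which follows from $p>p_{JL}(\sigma)$), the invariance of the planes $\{X=0\}$ and $\{Z=0\}$ plus uniqueness of the invariant manifolds to locate them, and the explicit invariant line through $P_2$ identified with the stationary solution. The only difference is that the paper additionally exploits the definition of $p_{JL}(\sigma)$ (vanishing discriminant of the $2\times 2$ block) to conclude that the two stable eigenvalues at $P_2$ are real and negative rather than merely of negative real part — a refinement not needed for the lemma as stated (your treatment of the degenerate case $N=4$ at $P_1$ is, if anything, slightly more careful than the paper's).
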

\begin{proof}
(a) The linearization of the system \eqref{PSsyst} in a neighborhood of the critical point $P_1$ has the matrix
$$
M(P_1)=\left(
         \begin{array}{ccc}
           2 & 0 & 0 \\[1mm]
           \frac{N+\sigma-p(N-2)}{\sigma+2} & N-2 & -1 \\[1mm]
           0 & 0 &  N+\sigma-p(N-2)\\
         \end{array}
       \right),
$$
with eigenvalues $\lambda_1=2$, $\lambda_2=N-2$ (both positive) and $\lambda_3=N+\sigma-p(N-2)$ (negative, since $p>p_{JL}(\sigma)$) and corresponding eigenvectors $e_1$, $e_2$ contained in the plane $\{Z=0\}$ and $e_3$ contained in the plane $\{X=0\}$ (as one can check by direct calculation). The stable manifold theorem \cite[Section 2.7]{Pe}, the uniqueness of the stable and unstable manifolds \cite[Theorem 3.2.1]{GH} and the invariance of the planes $\{X=0\}$ and $\{Z=0\}$ imply that the whole unstable manifold of $P_1$ is contained in the plane $\{Z=0\}$ and the whole stable manifold of $P_1$ is contained in the plane $\{X=0\}$, completing the proof.

\medskip

(b) The linearization of the system \eqref{PSsyst} in a neighborhood of the critical point $P_2$ has the matrix
$$
M(P_2)=\left(
         \begin{array}{ccc}
           2 & 0 & 0 \\[1mm]
           0 & \frac{(N-2)(p_S(\sigma)-p)}{p-1} & -1 \\[1mm]
           0 & \frac{(\sigma+2)[p(N-2)-(N+\sigma)]}{p-1} & 0 \\
         \end{array}
       \right),
$$
where $p_S(\sigma)$ is defined in \eqref{pS}. Since $p>p_{JL}(\sigma)>p_S(\sigma)$, we infer that the eigenvalues of $M(P_2)$ satisfy the conditions $\lambda_1=2$ and
$$
\lambda_2+\lambda_3=\frac{(N-2)(p_S(\sigma)-p)}{p-1}<0, \quad \lambda_2\lambda_3=\frac{(\sigma+2)[p(N-2)-(N+\sigma)]}{p-1}>0,
$$
thus both $\lambda_2$ and $\lambda_3$ are either negative or complex with negative real parts. Moreover, a rather tedious but direct calculation gives that $p_{JL}(\sigma)$ is obtained exactly by equating
$$
(\lambda_2+\lambda_3)^2-4\lambda_2\lambda_3=0
$$
(see \cite{Le90} for the similar calculation with $\sigma=0$), and thus for $p>p_{JL}(\sigma)$ both $\lambda_2$ and $\lambda_3$ are real negative numbers. An inspection of the eigenvectors proves that the stable manifold spanned by the eigenvectors $e_2$ and $e_3$ corresponding to the eigenvalues $\lambda_2$ and $\lambda_3$ is contained in the invariant plane $\{X=0\}$. The uniqueness of the one-dimensional unstable manifold and the fact that, by applying the change of variable \eqref{PSchange} to the stationary solution \eqref{stat.sol} we obtain the line of equation \begin{equation}\label{stat.sol.plane}
Y=-\frac{\sigma+2}{p-1}, \quad Z=Z(P_2):=\frac{(\sigma+2)[p(N-2)-(N+\sigma)]}{(p-1)^2},
\end{equation}
imply that the unique trajectory contained in the unstable manifold of $P_2$ is the line \eqref{stat.sol.plane}, as claimed.
\end{proof}

\subsection{Analysis at infinity: the Poincar\'e hypersphere}\label{subsec.inf}

The previous local analysis is completed by the analysis of the critical points at infinity of the phase space associated to the system \eqref{PSsyst}. To this end, we perform a compactification of the space by setting
$$
X=\frac{\overline{X}}{W}, \qquad Y=\frac{\overline{Y}}{W}, \qquad Z=\frac{\overline{Z}}{W}
$$
and thus constructing the Poincar\'e hypersphere in variables $(\overline{X},\overline{Y},\overline{Z},W)$. Following the theory in \cite[Theorem 4, Section 3.10]{Pe}, the critical points at infinity of the system \eqref{PSsyst} are thus given by the solutions of the system
\begin{equation*}
\left\{\begin{array}{ll}\frac{1}{\sigma+2}\overline{X}\overline{Y}[(p-1)\overline{X}-(\sigma+2)\overline{Y}]=0,\\
(p-1)\overline{X}\overline{Z}\overline{Y}=0,\\
\frac{1}{\sigma+2}\overline{Y}\overline{Z}[p(\sigma+2)\overline{Y}-(p-1)\overline{X}]=0,\end{array}\right.
\end{equation*}
together with the condition of belonging to the equator of the hypersphere, that is, $W=0$ and the additional equation $\overline{X}^2+\overline{Y}^2+\overline{Z}^2=1$. Recalling that $\overline{X}\geq0$ and $\overline{Z}\geq0$, we obtain the following critical points:
\begin{equation*}
\begin{split}
&Q_1=(1,0,0,0), \ \ Q_{2,3}=(0,\pm1,0,0), \ \ Q_4=(0,0,1,0), \ \ Q_{\gamma}=\left(\gamma,0,\sqrt{1-\gamma^2},0\right),\\
&Q_5=\left(\frac{\sigma+2}{\sqrt{(\sigma+2)^2+(p-1)^2}},\frac{p-1}{\sqrt{(\sigma+2)^2+(p-1)^2}},0,0\right),
\end{split}
\end{equation*}
with $\gamma\in(0,1)$. In order to analyze the flow of the system in a neighborhood of the critical points $Q_1$, $Q_{\gamma}$ and $Q_5$ (all them characterized by a nonzero first coordinate), we employ the system based on the projection on the $X$ variable according to \cite[Theorem 5(a), Section 3.10]{Pe}, which in our case becomes,
\begin{equation}\label{PSinf1}
\left\{\begin{array}{ll}\dot{x}=-2x^2,\\
\dot{y}=-y^2+\frac{p-1}{\sigma+2}y+x-Nxy-xz,\\
\dot{z}=z[(p-1)y+\sigma x],\end{array}\right.
\end{equation}
where the variables expressed in lowercase letters are obtained from the original ones by the following change of variable
\begin{equation}\label{change2}
x=\frac{1}{X}, \qquad y=\frac{Y}{X}, \qquad z=\frac{Z}{X},
\end{equation}
and the independent variable with respect to which derivatives are taken in \eqref{PSinf1} is defined by
\begin{equation}\label{indep2}
\frac{d\eta_1}{d\xi}=\frac{X(\xi)}{\xi}=\frac{\alpha}{m}\xi, \quad {\rm that \ is} \quad \eta_1=\frac{\alpha}{2m}\xi^2.
\end{equation}
The critical points $Q_1$, $Q_{\gamma}$ and $Q_5$ are identified in the system \eqref{PSinf1}, respectively, by the critical points $(0,0,0)$, $(0,0,\kappa)$ with $\kappa:=\sqrt{1-\gamma^2}/\gamma\in(0,\infty)$, and $(0,(p-1)/(\sigma+2),0)$. We analyze below the flow of the system in a neighborhood of these points, starting from $Q_1$, noticing that this analysis is, in many steps of it, a particular case of the one performed for the Hardy-H\'enon equation with quasilinear diffusion in the recent work \cite{IS25a}.
\begin{lemma}\label{lem.Q1}
The critical point $Q_1$ of the system \eqref{PSinf1} is non-hyperbolic, with a unique two-dimensional center manifold and a one-dimensional unstable manifold contained in the $y$-axis. The flow on the center manifold has a stable direction, and the trajectories contained in the center manifold correspond in variables $(f,\xi)$ to profiles such that
\begin{equation}\label{decay.Q1}
\lim\limits_{\xi\to\infty}\xi^{(\sigma+2)/(p-1)}f(\xi)=K\in(0,\infty).
\end{equation}
\end{lemma}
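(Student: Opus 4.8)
The plan is to apply the Center Manifold Theorem to the system \eqref{PSinf1} at $Q_1=(0,0,0)$. First I would linearize \eqref{PSinf1} at the origin, obtaining the matrix
$$
M(Q_1)=\left(\begin{array}{ccc} 0 & 0 & 0 \\ 1 & \frac{p-1}{\sigma+2} & 0 \\ 0 & 0 & 0 \end{array}\right),
$$
whose eigenvalues are $\lambda_1=\frac{p-1}{\sigma+2}>0$ together with the double eigenvalue $\lambda_2=\lambda_3=0$; this already shows that $Q_1$ is non-hyperbolic. The eigenvector associated with $\lambda_1$ is $(0,1,0)$, and since the $y$-axis is invariant for \eqref{PSinf1} (because $x=z=0$ forces $\dot{x}=\dot{z}=0$), the uniqueness of the unstable manifold \cite[Theorem 3.2.1]{GH} confines the one-dimensional unstable manifold to the $y$-axis. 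The double zero eigenvalue produces a two-dimensional center manifold, tangent at $Q_1$ to the plane spanned by $\left(\frac{p-1}{\sigma+2},-1,0\right)$ and $(0,0,1)$; equivalently it is locally a graph $y=h(x,z)$ with $h(0,0)=0$, $h_x(0,0)=-\frac{\sigma+2}{p-1}$ and $h_z(0,0)=0$.

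Next I would determine $h$ by inserting the graph into the invariance identity $\dot{y}=h_x\dot{x}+h_z\dot{z}$ and matching Taylor coefficients, which to leading order yields $(p-1)h(x,z)+\sigma x=-2x+O(|(x,z)|^2)$. Substituting this into \eqref{PSinf1} reduces the flow on the center manifold to
$$
\dot{x}=-2x^2, \qquad \dot{z}=z\left[-2x+O(|(x,z)|^2)\right].
$$
In the physical region $x=1/X>0$, $z=Z/X>0$ the first equation gives $\dot{x}<0$, so $x(\eta_1)\sim 1/(2\eta_1)\to0^+$ and $z$ is likewise driven to $0$ (to leading order $z\sim Cx$) as $\eta_1\to+\infty$; this is the stable direction on the center manifold along which the trajectories reach $Q_1$ in forward time. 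Since $X\to\infty$ corresponds to $\xi\to\infty$, this is exactly the tail regime relevant for the backward shooting.

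Finally I would translate these trajectories back to the profile variables. On the center manifold $Y=Xy=Xh(x,z)=-\frac{\sigma+2}{p-1}+O(1/X)$, using $x=1/X$, the expansion of $h$, and $z=O(1/X)$. Recalling that $Y=\xi f'(\xi)/f(\xi)=d(\ln f)/d\eta$ with $\eta=\ln\xi$ and $X=\frac{\sigma+2}{2(p-1)}\xi^2$, the error term is $O(\xi^{-2})=O(e^{-2\eta})$, so
$$
\ln\left(\xi^{(\sigma+2)/(p-1)}f(\xi)\right)=\int\left(Y+\frac{\sigma+2}{p-1}\right)d\eta+\mathrm{const}
$$
converges as $\xi\to\infty$, the integrand being integrable at $+\infty$. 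This gives \eqref{decay.Q1} with a finite limit, and both $K>0$ and $K<\infty$ follow from the finiteness of the integral. The uniqueness of the center manifold in the physical region $x\ge0$ can be justified by noting that the approach to $Q_1$ along $x>0$ is only polynomially (not exponentially) fast, so that no flat ambiguity arises there; alternatively one invokes the corresponding statement established in \cite[Section 2]{IS25a}.

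The hard part will be the analysis of the reduced flow on the center manifold: the linearization restricted to the center subspace vanishes, so the stability of $Q_1$ within the center manifold is governed entirely by the nonlinear terms, and one must compute $h$ to sufficient order both to identify the stable direction and to control the rate of approach finely enough (the $O(\xi^{-2})$ estimate) to guarantee $K\in(0,\infty)$ rather than $K=0$ or $K=\infty$.
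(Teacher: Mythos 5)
Your proposal follows essentially the same route as the paper's proof: the same linearization of \eqref{PSinf1} at $Q_1$, confinement of the one-dimensional unstable manifold to the invariant $y$-axis, a center-manifold computation whose leading term is $y\sim-\frac{\sigma+2}{p-1}x$, the reduced dynamics $\dot{x}=-2x^2$, $\dot{z}\approx-2xz$ with stable flow direction, and integration of this reduced system to get $z/x\to k\in(0,\infty)$ (equivalently $Z\to k$), which yields \eqref{decay.Q1} after undoing \eqref{change2} and \eqref{PSchange}. The one refinement the paper adds, and which is exactly what makes your asserted step $z\sim Cx$ (i.e.\ the boundedness of $Z$, hence $K\in(0,\infty)$ rather than $0$ or $\infty$) rigorous by direct integration, is the structural observation that the center-manifold expansion contains no pure powers of $z$ — every term beyond the linear one is a multiple of $x$, by induction using the form of \eqref{interm9} — so the remainder in the reduced $\dot z$-equation is $xO(|(x,z)|^2)$ rather than your weaker $zO(|(x,z)|^2)$; the paper also settles uniqueness of the center manifold by citing Sijbrand's theorem instead of your polynomial-rate heuristic.
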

\begin{proof}
The linearization of the system \eqref{PSinf1} in a neighborhood of the origin has the matrix
$$M(Q_1)=\left(
         \begin{array}{ccc}
           0 & 0 & 0 \\[1mm]
           1 & \frac{p-1}{\sigma+2} & 0 \\[1mm]
           0 & 0 & 0 \\
         \end{array}
       \right).
$$
It is then obvious that the one-dimensional unstable manifold is contained in the $y$-axis, owing to its invariance and the eigenvector $e_2=(0,1,0)$ corresponding to the only positive eigenvalue $\lambda_2=(p-1)/(\sigma+2)$. We analyze next the center manifolds of $Q_1$. To this end, we further introduce the change of variable
\begin{equation}\label{cmchange}
w=\frac{p-1}{\sigma+2}y+x,
\end{equation}
which allows us to transform the system \eqref{PSinf1} into a system written in the canonical form for the center manifold theory according to \cite{Carr} (see also \cite[Section 2.12]{Pe}), that is,
\begin{equation}\label{interm9}
\left\{\begin{array}{ll}\dot{x}=-2x^2,\\[1mm]
\dot{w}=\frac{p-1}{\sigma+2}w-\frac{\sigma+2}{p-1}w^2-\frac{p-1}{\sigma+2}xz+\frac{2(\sigma+2)-N(p-1)}{p-1}xw-\frac{\sigma+2p-N(p-1)}{p-1}x^2,\\[1mm]
\dot{z}=-2xz+(\sigma+2)zw.\end{array}\right.
\end{equation}
Following \cite[Section 2.5]{Carr}, we then look for an approximation of the center manifolds
$$
w(x,z)=ax^2+bxz+cz^2+O(|(x,z)|^3).
$$
Replacing the above ansatz into the general equation of the center manifold and employing the vector field of the system \eqref{interm9} and \cite[Theorem 3, Section 2.5]{Carr} (see also \cite[Lemma 2.1]{IMS23} for detailed calculations), we obtain the following approximation for the center manifold of $Q_1$ after undoing the change of variable \eqref{cmchange}:
\begin{equation}\label{cmf}
y=-\frac{\sigma+2}{p-1}x+\frac{(\sigma+2)^2[N+\sigma-p(N-2)]}{(p-1)^3}x^2+\frac{\sigma+2}{p-1}xz+xO(|(x,z)|^2),
\end{equation}
observing by a simple induction that the remainder is a multiple of $x$, due to the lack of pure powers of $z$ in the system \eqref{interm9}. We then infer from \eqref{cmf} and \cite[Theorem 2, Section 2.4]{Carr} that the flow on the center manifolds is given by the reduced system
\begin{equation}\label{red.syst}
\left\{\begin{array}{ll}\dot{x}&=-2x^2+x^2O(|(x,z)|),\\[1mm]
\dot{z}&=-2xz+xO(|(x,z)|^2),\end{array}\right.
\end{equation}
hence the direction of the flow on any center manifold is stable (towards the critical point $Q_1$). The latter fact, together with \cite[Theorem 3.2']{Sij}, gives the uniqueness of the center manifold. We further get by integration in \eqref{red.syst} that
\begin{equation*}
\lim\limits_{\eta_1\to\infty}\frac{z(\eta_1)}{x(\eta_1)}=k>0
\end{equation*}
and then, by \eqref{change2} and \eqref{indep2},
\begin{equation}\label{interm5}
\lim\limits_{\xi\to\infty}Z(\xi)=k\in(0,\infty).
\end{equation}
We notice that \eqref{interm5} implies the decay \eqref{decay.Q1} after undoing the change of variable \eqref{PSchange}, as claimed.
\end{proof}

\noindent \textbf{Remark.} The previous proof actually shows that, for any $k\in(0,\infty)$, there is a unique trajectory entering $Q_1$ on its center manifold such that \eqref{interm5} holds true. Let us denote by $\mathcal{C}_k$ this trajectory, corresponding to a limit $k\in(0,\infty)$ in \eqref{interm5}. We observe that the stationary solution \eqref{stat.sol}, represented in the phase space as the line \eqref{stat.sol.plane}, is in fact the trajectory $\mathcal{C}_{Z_0}$. Thus, the local behaviors \eqref{decay.peak} and \eqref{decay.neg} correspond to trajectories $\mathcal{C}_{k}$ with $k<Z_0$, respectively $k>Z_0$. Let us finally note that \eqref{cmf} and the already established decay \eqref{decay.Q1} imply (after undoing the changes of variable \eqref{change2} and \eqref{PSchange} for $y$, respectively $Y$) that the profiles contained in the trajectories entering $Q_1$ also satisfy
\begin{equation}\label{beh.Q1.deriv}
\lim\limits_{\xi\to\infty}\xi^{(\sigma+2)/(p-1)+1}f'(\xi)\in(0,\infty).
\end{equation}

We continue our analysis with the critical point $Q_5$.
\begin{lemma}\label{lem.Q5}
The critical point $Q_5$ is a non-hyperbolic critical point for the system \eqref{PSinf1}, with a one-dimensional stable manifold, a one-dimensional unstable manifold and one-dimensional center manifolds with a stable direction of the flow. The unstable manifold is contained in the invariant plane $\{x=0\}$, while the stable manifold and all the center manifolds generate a two-dimensional center-stable manifold fully contained in the invariant plane $\{z=0\}$.
\end{lemma}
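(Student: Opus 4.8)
The plan is to reproduce, at the point $Q_5=(0,(p-1)/(\sigma+2),0)$ of the system \eqref{PSinf1}, the same type of local analysis already carried out for $Q_1$ in Lemma \ref{lem.Q1}. First I would linearize \eqref{PSinf1} at $Q_5$; a direct computation produces the matrix
$$
M(Q_5)=\begin{pmatrix} 0 & 0 & 0 \\ 1-\dfrac{N(p-1)}{\sigma+2} & -\dfrac{p-1}{\sigma+2} & 0 \\ 0 & 0 & \dfrac{(p-1)^2}{\sigma+2}\end{pmatrix},
$$
whose eigenvalues are $\lambda_1=0$, $\lambda_2=-(p-1)/(\sigma+2)<0$ and $\lambda_3=(p-1)^2/(\sigma+2)>0$, with eigenvectors $e_1$ lying in the plane $\{z=0\}$, $e_2=(0,1,0)$ along the $y$-axis, and $e_3=(0,0,1)$ along the $z$-axis. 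This immediately yields the announced non-hyperbolic structure: exactly one center, one stable and one unstable direction.

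Next I would pin down the stable and unstable manifolds by exploiting the invariant planes, in the spirit of Lemma \ref{lem.P1P2}. Since $e_3$ spans the $z$-axis, which lies in the invariant plane $\{x=0\}$, the uniqueness of the one-dimensional unstable manifold \cite[Theorem 3.2.1]{GH} forces $W^u(Q_5)\subset\{x=0\}$; restricting \eqref{PSinf1} to $\{x=0\}$ confirms that there $Q_5$ behaves as a saddle with unstable direction along $z$. For the stable manifold, I would note that the $y$-axis $\{x=0,z=0\}$ is invariant, that the reduced flow on it is $\dot y=y\big((p-1)/(\sigma+2)-y\big)$, and that $Q_5$ is an attracting fixed point of this scalar flow with eigenvalue $\lambda_2$; hence the $y$-axis is precisely the one-dimensional stable manifold, and it is contained in $\{z=0\}$.

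The core of the argument is the center manifold. The crucial structural observation is that the unstable eigendirection $e_3$ is exactly transverse to the invariant plane $\{z=0\}$, while $E^c\oplus E^s=\mathrm{span}(e_1,e_2)$ is tangent to it; therefore $\{z=0\}$ is a natural two-dimensional center-stable manifold. I would restrict \eqref{PSinf1} to $\{z=0\}$, translate $Q_5$ to the origin by $v=y-(p-1)/(\sigma+2)$, diagonalize the linear part, and compute a center manifold as a graph over the center variable following \cite[Section 2.5]{Carr}, exactly as for $Q_1$. The decisive simplification is that the equation $\dot x=-2x^2$ decouples from the remaining variables, so the reduced flow on any center manifold is simply $\dot x=-2x^2$. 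Since the region of interest corresponds to $x=1/X>0$ by \eqref{change2}, this gives $x\to0^+$ as $\eta_1\to\infty$, i.e. the flow on the center manifolds is directed toward $Q_5$: the announced stable direction.

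Finally, to show that all center manifolds lie in $\{z=0\}$ I would run the same induction suggested for $Q_1$. Writing a candidate center manifold near $Q_5$ as a graph and using that the $z$-equation factors as $\dot z=z[(p-1)y+\sigma x]$ with $(p-1)y+\sigma x\to\lambda_3\neq0$ at $Q_5$, one matches Taylor coefficients order by order: at each order the $z$-component satisfies $\lambda_3\cdot(\text{coefficient})=(\text{strictly higher-order contributions from }\dot x=O(x^2))$, which forces every coefficient to vanish. Hence the center manifolds are tangent to $\{z=0\}$ to all orders and, together with the invariance of $\{z=0\}$ and the uniqueness statement \cite[Theorem 3.2']{Sij}, assemble with the stable manifold into the two-dimensional center-stable manifold $\{z=0\}$. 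I expect this last step to be the main obstacle, since the non-uniqueness of center manifolds makes the claim that no center manifold escapes $\{z=0\}$ the delicate part; the all-orders vanishing, supported by the explicit invariant plane $\{z=0\}$, is what closes it.
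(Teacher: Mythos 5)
Most of your proposal coincides with the paper's proof: the linearization matrix $M(Q_5)$, the eigendata $\lambda_1=0$, $\lambda_2=-(p-1)/(\sigma+2)$, $\lambda_3=(p-1)^2/(\sigma+2)$, the containment of the unstable manifold in $\{x=0\}$ via invariance plus uniqueness \cite[Theorem 3.2.1]{GH}, the identification of the stable manifold with the $y$-axis (the paper reads this off the eigenvector $e_2=(0,1,0)$ and the invariance of both coordinate planes; your scalar flow $\dot y=y\bigl((p-1)/(\sigma+2)-y\bigr)$ is an equivalent observation), and the reduced dynamics $\dot x=-2x^2$ giving the stable direction of the flow on the center manifolds.

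The gap is in your last step. Matching Taylor coefficients proves only that the $z$-component of a center manifold vanishes to \emph{infinite order} at $Q_5$, not that it vanishes identically, and this cannot be repaired by any order-by-order argument: writing a center manifold as a graph $(v,z)=(g(x),h(x))$ with $v=y-(p-1)/(\sigma+2)$, the invariance equation for the $z$-component is $-2x^2h'(x)=h(x)\bigl[\lambda_3+(p-1)g(x)+\sigma x\bigr]$, whose model version $-2x^2h'=\lambda_3 h$ has the nontrivial solutions $h(x)=Ce^{\lambda_3/(2x)}$. For $x<0$ these are flat at $x=0$ (all derivatives vanish), so gluing them with $h\equiv0$ for $x\geq0$ produces $C^\infty$ center manifolds that satisfy your coefficient identities yet are \emph{not} contained in $\{z=0\}$; this is exactly the standard non-uniqueness mechanism on the repelling side of the center dynamics. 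For the same reason the appeal to \cite[Theorem 3.2']{Sij} is misplaced at $Q_5$: that uniqueness statement is what the paper uses at $Q_1$, whose spectrum is (center, center, unstable) with attracting center flow, whereas at $Q_5$ the additional stable eigenvalue $\lambda_2<0$ makes the center manifold genuinely non-unique. What is true, and is all that the lemma needs since the region of interest has $x=1/X>0$, is containment on the side $x\geq0$: there the same explicit solution shows $|h(x)|\sim Ce^{\lambda_3/(2x)}\to\infty$ as $x\to0^+$ unless $C=0$, so $h\equiv0$ for $x>0$. The paper sidesteps the whole issue with a softer argument: the plane $\{z=0\}$ is invariant and tangent at $Q_5$ to ${\rm span}(e_1,e_2)$, hence it itself furnishes the two-dimensional center-stable manifold, and nothing stronger is used downstream (Lemma \ref{lem.Z0} only needs the flow restricted to $\{z=0\}$ and the attracting character of $Q_5$ there). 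If you want to keep your stronger formulation, you should restrict it to $x\geq0$ and replace the Taylor induction by the growth argument above.
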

\begin{proof}
The linearization of the system \eqref{PSinf1} in a neighborhood of the critical point $Q_5=(0,(p-1)/(\sigma+2),0)$ has the matrix
$$
M(Q_5)=\left(
  \begin{array}{ccc}
    0 & 0 & 0 \\[1mm]
    1-\frac{N(p-1)}{\sigma+2} & -\frac{p-1}{\sigma+2} & 0 \\[1mm]
    0 & 0 & \frac{(p-1)^2}{\sigma+2} \\
  \end{array}
\right),
$$
with eigenvalues $\lambda_1=0$, $\lambda_2=-(p-1)/(\sigma+2)<0$, $\lambda_3=(p-1)^2/(\sigma+2)>0$, and corresponding eigenvectors
$$
e_1=\left(\frac{p-1}{\sigma+2},1-\frac{N(p-1)}{\sigma+2},0\right), \quad e_2=(0,1,0), \quad e_3=(0,0,1).
$$
The uniqueness of the stable and unstable manifolds following from \cite[Theorem 3.2.1]{GH}, together with the invariance of the planes $\{x=0\}$ and $\{z=0\}$ and the form of the eigenvectors $e_2$ and $e_3$ give that the stable and unstable manifold are unique and contained, respectively, in the $y$-axis and in the plane $\{x=0\}$. An analysis of the center manifolds follow the same ideas as in the proof of Lemma \ref{lem.Q1} by successively setting
$$
t:=y-\frac{p-1}{\sigma+2}, \quad w:=-\frac{p-1}{\sigma+2}t+[N+\sigma-p(N-2)]x,
$$
in order to translate $Q_5$ to the origin and then derive the local approximation of the equation of the center manifold. However, we omit these calculations here, since it is obvious that on any center manifold, the reduced equation giving the direction of the flow is given by $\dot{x}=-2x^2$, which implies the stable direction of the flow on the trajectories. The (infinitely many) center manifolds and the stable manifold generate a two-dimensional center-stable manifold tangent to the subspace of the plane $\{z=0\}$ spanned by the eigenvectors $e_1$ and $e_2$, and the invariance of the plane $\{z=0\}$ implies that this center-stable manifold actually belongs to the plane $\{z=0\}$, completing the proof.
\end{proof}
We continue our analysis with the critical points $Q_{\gamma}$, identified as $(0,0,\kappa)$ in the system \eqref{PSinf1}, with
\begin{equation*}
\kappa=\kappa(\gamma):=\frac{\sqrt{1-\gamma^2}}{\gamma}\in(0,\infty).
\end{equation*}
We give below the local behavior around these points for the sake of completeness, as $Q_{\gamma}$ will not play an important role in the forthcoming analysis. 
\begin{lemma}\label{lem.Qg}
For
\begin{equation}\label{eq.gamma}
\gamma=\gamma_0:=\frac{\alpha(p-1)}{\sqrt{1+\alpha^2(p-1)^2}}, \quad \alpha=\frac{\sigma+2}{2(p-1)},
\end{equation}
the critical point $Q_{\gamma_0}$ admits a unique trajectory entering it and coming from the finite part of the phase space associated to the system \eqref{PSsyst}, corresponding to profiles with the behavior
\begin{equation}\label{beh.Qg}
\lim\limits_{\xi\to\infty}\xi^{\sigma/(p-1)}f(\xi)=\left(\frac{1}{p-1}\right)^{1/(p-1)},
\end{equation}
which is either decaying to zero or unbounded depending on the sign of $\sigma\in(-2,\infty)\setminus\{0\}$, while for $\sigma=0$ the trajectory entering $Q_{\gamma_0}$ corresponds to the constant profile
\begin{equation*}
f(\xi)=\left(\frac{1}{p-1}\right)^{1/(p-1)}.
\end{equation*}
For $\gamma\in(0,1)$ with $\gamma\neq\gamma_0$, all the trajectories connecting to or from $Q_{\gamma}$ are fully contained in the plane $\{x=0\}$.
\end{lemma}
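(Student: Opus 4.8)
The plan is to follow the center-manifold strategy already used for $Q_1$ and $Q_5$ in Lemmas \ref{lem.Q1} and \ref{lem.Q5}, applied now to the critical point $(0,0,\kappa)$ of the system \eqref{PSinf1}, while keeping in mind that the points $Q_\gamma$ form an entire line of critical points parametrized by $\kappa\in(0,\infty)$. First I would linearize \eqref{PSinf1} at $(0,0,\kappa)$, obtaining
$$
M(Q_\gamma)=\left(\begin{array}{ccc} 0 & 0 & 0 \\[1mm] 1-\kappa & \frac{p-1}{\sigma+2} & 0 \\[1mm] \sigma\kappa & (p-1)\kappa & 0 \end{array}\right),
$$
whose characteristic polynomial factors as $\lambda^2\left(\frac{p-1}{\sigma+2}-\lambda\right)$. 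Thus the eigenvalues are $\lambda_1=\lambda_2=0$ and $\lambda_3=\frac{p-1}{\sigma+2}>0$, with eigenvector $e_3=(0,1,(\sigma+2)\kappa)$ for $\lambda_3$. Since $e_3$ lies in the invariant plane $\{x=0\}$, the one-dimensional unstable manifold of $Q_\gamma$ is contained in $\{x=0\}$; moreover $(0,0,1)$, the direction tangent to the line of critical points, is a genuine null vector of $M(Q_\gamma)$. What remains is therefore a two-dimensional center-manifold problem, and the fact that $y=0$ is repelling in the $y$-direction (the positive eigenvalue $\lambda_3$ being exactly the one driving the flow toward $Q_5$, where $y=\frac{p-1}{\sigma+2}$) shows that only a codimension-one set of trajectories can have $y\to0$ and so reach $Q_\gamma$.

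The algebraic observation I would isolate next is that the double eigenvalue $\lambda=0$ is defective (geometric multiplicity one) for a generic $\kappa$, and that $\ker M(Q_\gamma)$ becomes two-dimensional precisely when $(\sigma+2)\kappa=2$; a short computation using \eqref{eq.gamma} identifies this value as $\kappa=\kappa(\gamma_0)=2/(\sigma+2)$. This is the algebraic signature of the whole statement: it already singles out $\gamma_0$ as the only point of the critical line at which the local structure changes, before any asymptotics are carried out.

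To convert this into a statement about trajectories, I would study the flow on the side $x>0$. Integrating the first equation of \eqref{PSinf1} gives $x(\eta_1)\sim 1/(2\eta_1)$ as $\eta_1\to\infty$. On any trajectory approaching $Q_\gamma$ (hence with $y\to0$), inserting this rate into the linearized second equation $\dot y\approx\frac{p-1}{\sigma+2}y+(1-\kappa)x$ and discarding the exponentially growing mode — which is exactly the center-stable condition — yields
$$
y(\eta_1)\sim\frac{(\sigma+2)(\kappa-1)}{2(p-1)}\,\frac{1}{\eta_1},\qquad \eta_1\to\infty .
$$
Feeding $x$ and $y$ into the third equation gives
$$
\frac{\dot z}{z}=(p-1)y+\sigma x\sim\frac{(\sigma+2)\kappa-2}{2}\,\frac{1}{\eta_1},
$$
so that $z(\eta_1)\sim c\,\eta_1^{[(\sigma+2)\kappa-2]/2}$. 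The requirement $z\to\kappa\in(0,\infty)$ needed to reach $Q_\gamma$ forces the exponent to vanish, i.e. $\kappa=2/(\sigma+2)$, equivalently $\gamma=\gamma_0$; for $\kappa<2/(\sigma+2)$ one finds $z\to0$ (the trajectory is instead absorbed by $Q_1$) and for $\kappa>2/(\sigma+2)$ one finds $z\to\infty$. This simultaneously produces the unique trajectory entering $Q_{\gamma_0}$ from $x>0$ and proves that for $\gamma\neq\gamma_0$ every trajectory connecting to or from $Q_\gamma$ remains in $\{x=0\}$. I would make this rigorous exactly as in the proofs of Lemmas \ref{lem.Q1} and \ref{lem.Q5}, translating \eqref{PSinf1} to the canonical form of \cite{Carr} and reading off the reduced flow, rather than relying on the matched expansion above.

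Finally, undoing the changes of variable \eqref{change2}, \eqref{indep2} and \eqref{PSchange} along the trajectory entering $Q_{\gamma_0}$, the identity $z=\frac{2(p-1)}{\sigma+2}\,\xi^{\sigma}f(\xi)^{p-1}$ together with $z\to2/(\sigma+2)$ yields $\lim_{\xi\to\infty}\xi^{\sigma}f(\xi)^{p-1}=1/(p-1)$, which is precisely \eqref{beh.Qg}. The sign of the exponent $-\sigma/(p-1)$ then makes the profile decay to zero for $\sigma>0$ and grow unbounded for $\sigma\in(-2,0)$, while for $\sigma=0$ relation \eqref{beh.Qg} reduces to the constant profile $f\equiv(1/(p-1))^{1/(p-1)}$, which one checks directly solves \eqref{SSODE}. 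The main obstacle I anticipate is the degenerate geometry at $Q_\gamma$: a doubly-zero, defective linearization sitting on top of a line of critical points means the center-manifold reduction must be organized carefully to separate the drift along the critical line from the transverse dynamics, and the selection of $\gamma_0$ only surfaces once the sub-leading rates of $x$, $y$ and $z$ are controlled.
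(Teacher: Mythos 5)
Your proposal is essentially correct, and its computations all check out: the linearization matrix at $(0,0,\kappa)$, the eigenvalues $0,0,(p-1)/(\sigma+2)$ with unstable eigenvector $(0,1,(\sigma+2)\kappa)$ in the invariant plane $\{x=0\}$, the observation that $\ker M(Q_\gamma)$ jumps to dimension two exactly when $(\sigma+2)\kappa=2$, the identity $\kappa(\gamma_0)=2/(\sigma+2)$ coming from \eqref{eq.gamma}, the rates $x\sim 1/(2\eta_1)$, $y\sim (\sigma+2)(\kappa-1)/(2(p-1)\eta_1)$, $\dot z/z\sim((\sigma+2)\kappa-2)/(2\eta_1)$, and the translation of $z\to 2/(\sigma+2)$ into \eqref{beh.Qg} via \eqref{change2} and \eqref{PSchange}. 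A caveat on comparison: the paper does not actually prove this lemma, but defers it verbatim to \cite[Lemma 3.2]{IS25a} with $m=1$, so a line-by-line match is impossible; your argument is, however, a self-contained reconstruction in the same spirit as the center-manifold analyses of Lemmas \ref{lem.Q1} and \ref{lem.Q5}. Two loose ends are worth fixing. First, the statement also excludes trajectories \emph{emanating} from $Q_\gamma$ into $\{x>0\}$, which your asymptotics (formulated only for trajectories \emph{entering} $Q_\gamma$) do not address; this is immediate, though, since $\dot x=-2x^2$ integrates exactly to $x(\eta_1)=x_0/(1+2x_0(\eta_1-\eta_0))$, so any orbit with $x>0$ blows up in finite backward time and cannot have a finite critical point as $\alpha$-limit. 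Second, your matched expansion only yields the \emph{necessary} condition (no $Q_\gamma$ with $\gamma\neq\gamma_0$ is reachable from $x>0$); the existence and uniqueness of the orbit entering $Q_{\gamma_0}$, which you defer to the canonical form of \cite{Carr}, deserves to be made concrete because the degeneracy sits on a whole line of equilibria. A clean way to finish: on the two-dimensional center manifold one has $y=h(x,z)\approx (\sigma+2)(z-1)x/(p-1)$, so the reduced flow in the variables $x$ and $w:=(\sigma+2)z-2$ reads $\dot x=-2x^2+\cdots$, $\dot w=2xw+\cdots$; rescaling time by ${\rm d}\tau=x\,{\rm d}\eta_1$ turns the origin into a hyperbolic saddle with eigenvalues $-2$ and $2$, and the stable manifold theorem then gives exactly one orbit with $x>0$, $w\to 0$, while all others drift to $w\to\pm\infty$ (i.e.\ $z\to\infty$ or $z\to 0$, the latter being captured by $Q_1$), exactly the dichotomy your expansion predicts. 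With these two additions the proof is complete.
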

The proof is rather technical, but completely identical to the one of \cite[Lemma 3.2]{IS25a} by simply letting $m=1$ in the calculations therein, thus we omit it here and refer the reader to the mentioned reference.

We go on with the local analysis of the critical points $Q_2$ and $Q_3$, which is performed by means of a projection on the $Y$-variable according to \cite[Theorem 5(b), Section 3.10]{Pe}. We thus obtain the system
\begin{equation}\label{PSinf2}
\left\{\begin{array}{ll}\pm\dot{\widetilde{x}}=-\widetilde{x}-N\widetilde{x}\widetilde{w}+\frac{p-1}{\sigma+2}\widetilde{x}^2+\widetilde{x}^2\widetilde{w}-\widetilde{x}\widetilde{z}\widetilde{w},\\[1mm]
\pm\dot{\widetilde{z}}=-p\widetilde{z}-(N+\sigma)\widetilde{z}\widetilde{w}+\frac{p-1}{\sigma+2}\widetilde{x}\widetilde{z}+\widetilde{x}\widetilde{z}\widetilde{w}-\widetilde{z}^2\widetilde{w},\\[1mm]
\pm\dot{\widetilde{w}}=-\widetilde{w}-(N-2)\widetilde{w}^2+\frac{p-1}{\sigma+2}\widetilde{x}\widetilde{w}+\widetilde{x}\widetilde{w}^2-\widetilde{z}\widetilde{w}^2,\end{array}\right.
\end{equation}
where the new variables $\widetilde{x}$, $\widetilde{z}$, $\widetilde{w}$ are obtained from the original variables of the system \eqref{PSsyst} by the following change of variables
\begin{equation}\label{change3}
\widetilde{x}=\frac{X}{Y}, \qquad \widetilde{z}=\frac{Z}{Y}, \qquad \widetilde{w}=\frac{1}{Y},
\end{equation}
while the new independent variable $\eta_2$ of the system \eqref{PSinf2} is defined implicitly by
$$
\frac{d\eta_2}{d\xi}=\frac{Y(\xi)}{\xi}.
$$
The plus and minus signs in \eqref{PSinf2} correspond to the direction of the flow in a neighborhood of the points $Q_2$ and $Q_3$ and it is easy to see that the minus sign applies to $Q_2$ and the plus sign applies to $Q_3$, while the critical points $Q_2$, respectively $Q_3$, are mapped into the origin of \eqref{PSinf2} according to \cite[Theorem 5(b), Section 3.10]{Pe}.
\begin{lemma}\label{lem.Q23}
The critical points $Q_2$ and $Q_3$ are, respectively, an unstable node and a stable node. The trajectories going out of $Q_2$ correspond to profiles $f(\xi)$ such that there exists $\xi_0\in(0,\infty)$ and $\delta>0$ for which
\begin{equation}\label{beh.Q2}
f(\xi_0)=0, \qquad f'(\xi_0)=C>0, \qquad f>0 \ {\rm on} \ (\xi_0,\xi_0+\delta),
\end{equation}
while the trajectories entering the stable node $Q_3$ correspond to profiles $f(\xi)$ such that there exists $\xi_0\in(0,\infty)$ and $\delta\in(0,\xi_0)$ for which
\begin{equation}\label{beh.Q3}
f(\xi_0)=0, \qquad f'(\xi_0)=-C<0, \qquad f>0 \ {\rm on} \ (\xi_0-\delta,\xi_0).
\end{equation}
\end{lemma}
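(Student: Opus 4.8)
The statement has two independent ingredients: the linear classification of the two nodes, which is immediate, and the translation of the local phase portrait back into the behaviour of the profile $f$, which carries all the content. The plan is to treat them in this order.

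First I would read off the linearization of \eqref{PSinf2} at the origin. Every nonlinear monomial on the right-hand side contains at least two of the factors $\widetilde{x},\widetilde{z},\widetilde{w}$, so the Jacobian of the right-hand side at the origin is the diagonal matrix $\mathrm{diag}(-1,-p,-1)$. With the minus sign attached to $Q_2$ the field equals $-(\text{right-hand side})$, hence its linearization is $\mathrm{diag}(1,p,1)$ and $Q_2$ is an unstable node; with the plus sign attached to $Q_3$ the linearization is $\mathrm{diag}(-1,-p,-1)$ and $Q_3$ is a stable node. Since the three eigenvalues share a sign in each case, every orbit in a neighbourhood connects to the point and no invariant-manifold discussion is needed. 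The only structural fact I would record for later is that, because $p>1$, the coordinate $\widetilde{z}$ carries the eigenvalue of larger modulus and is therefore the fast variable, while $\widetilde{x}$ and $\widetilde{w}$ share the eigenvalue of modulus $1$.

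Next I would reconstruct the profile by undoing \eqref{change3}, using $\widetilde{x}/\widetilde{w}=X$, $\widetilde{z}/\widetilde{w}=Z$ and $\widetilde{w}=1/Y$. Along an orbit leaving $Q_2$ the two slow coordinates obey $\widetilde{x}\sim a\,e^{\eta_2}$ and $\widetilde{w}\sim c\,e^{\eta_2}$ to leading order, so $X=\widetilde{x}/\widetilde{w}$ tends to a constant $X_0$; the plane $\{\widetilde{x}=0\}$, invariant because $\widetilde{x}$ factors out of the first equation, corresponds to $X=0$, i.e. $\xi=0$, so for an orbit issuing from $\mathcal{R}_+$ one has $X_0\in(0,\infty)$ and, by \eqref{PSchange}, $\xi\to\xi_0=\sqrt{2(p-1)X_0/(\sigma+2)}\in(0,\infty)$. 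The fast variable gives $Z=\widetilde{z}/\widetilde{w}\to0$, which together with $\xi_0>0$ forces $f(\xi_0)=0$, while $\widetilde{w}=1/Y\to0^{+}$ gives $Y\to+\infty$; since $Z>0$ on $\mathcal{R}_+$ forces $f>0$, the profile satisfies $f\to0^{+}$ with $f'>0$ just to the right of $\xi_0$. To upgrade this to a simple zero I would use that at $\xi_0>0$ the coefficients of \eqref{SSODE} are regular and both $\xi^{\sigma}f^{p}$ and the zero-order term vanish as $\xi\to\xi_0$; hence $f$ extends as a $C^2$ solution with finite slope, and $f'(\xi_0)=C>0$ because $f'(\xi_0)=0$ would force $f\equiv0$ by uniqueness. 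This is exactly \eqref{beh.Q2}. The point $Q_3$ is handled symmetrically: the plus sign reverses the chart time, $Y\to-\infty$ makes $f'/f<0$, and the positivity interval now lies to the left of the zero, giving \eqref{beh.Q3}.

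The step I expect to be the real obstacle is proving that $X_0=\lim\widetilde{x}/\widetilde{w}$ exists and is strictly positive and finite, equivalently that the zero of $f$ sits at a finite $\xi_0>0$ rather than at $0$ or $\infty$. Here the diagonalizable repeated-eigenvalue block and the invariance of $\{\widetilde{x}=0\}$ carry the argument, but one must still check that the quadratic terms of \eqref{PSinf2} (in particular the $\widetilde{w}^2$ term, which has no $\widetilde{x}$ factor) do not disturb the leading exponential rates of $\widetilde{x}$ and $\widetilde{w}$, so that their ratio genuinely converges; this is a short normal-form estimate. Everything else reduces to the eigenvalue computation of the first paragraph and the changes of variable \eqref{PSchange}--\eqref{change3}.
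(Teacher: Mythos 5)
Your proposal is correct, and its first half coincides with the paper's proof: the paper also classifies $Q_2$ and $Q_3$ through the diagonal linearization $\mathrm{diag}(\mp1,\mp p,\mp1)$ of \eqref{PSinf2}, although it outsources that computation to \cite[Lemma 3.3]{IS25a}. The genuine difference lies in how the simple zero of $f$ is produced. The paper stays entirely inside the phase space: near the node it derives the two asymptotic relations $\widetilde{w}\sim C_1\widetilde{x}$ and $|\widetilde{z}|\sim C_2|\widetilde{w}|^p$; the first gives $X\to X_0\in(0,\infty)$, hence $\xi\to\xi_0\in(0,\infty)$, and the second, undone through \eqref{change3} and \eqref{PSchange}, reads
\begin{equation*}
YZ^{1/(p-1)}=\xi^{(\sigma+2)/(p-1)+1}f'(\xi)\longrightarrow C_3\neq0,
\end{equation*}
so the finite nonzero slope falls out of the fast--slow structure of the node (eigenvalue ratio $p$), and $f(\xi_0)=0$ is deduced \emph{last}, from $Y\to\pm\infty$ combined with the finiteness of $\xi_0$ and of $f'(\xi_0)$. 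You proceed in the opposite order: $f(\xi_0)=0$ first, from $Z=\widetilde{z}/\widetilde{w}\to0$, and then the simple-zero property from classical ODE theory at the regular point $\xi_0>0$. This buys you independence from the sharper relation $|\widetilde{z}|\sim C_2|\widetilde{w}|^p$, which is the one step of the paper's argument that genuinely requires node asymptotics with integrable error terms; the price is that your sentence ``hence $f$ extends as a $C^2$ solution with finite slope'' must be earned: boundedness of $f'$ does not follow merely because the nonlinear and zero-order terms vanish, but from the Gronwall-type bound $|f''|\le A|f'|+B$ valid on $(\xi_0,\xi_0+\delta)$; once $f$ extends in $C^2$ up to $\xi_0$, Lipschitz uniqueness for the initial value problem at $(f,f')=(0,0)$ (licit because $p>1$ makes $u\mapsto u^p$ a $C^1$ function of $u\ge0$ near $u=0$) rules out a double zero, exactly as you say.

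Concerning the obstacle you flag, you are right that invariance of $\{\widetilde{x}=0\}$ alone does not give $X_0=\lim\widetilde{x}/\widetilde{w}\in(0,\infty)$, but no normal-form estimate is needed: dividing the first and third equations of \eqref{PSinf2} by $\widetilde{x}$, respectively $\widetilde{w}$, every term cancels in the difference except one, leaving
\begin{equation*}
\frac{d}{d\eta_2}\ln\left(\frac{\widetilde{x}}{\widetilde{w}}\right)=\mp 2\,\widetilde{w},
\end{equation*}
and since the $\widetilde{w}$-equation gives $|\widetilde{w}(\eta_2)|\le Ce^{-\eta_2/2}$ near the node, the logarithm of the ratio converges, so $X_0\in(0,\infty)$. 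This one-line identity (it is just $X'=2X$ read in the $\eta_2$ clock) is also what hides behind the paper's phrase that \eqref{interm6} is ``readily deduced'' from the equations; with it, your argument is complete.
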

\begin{proof}
The proof of the stability of the critical points is completely identical to the one of \cite[Lemma 3.3]{IS25a} (by letting $m=1$ in the calculations therein), except for the local behavior of the profiles, which we give below for $Q_3$ (the analysis being analogous for $Q_2$). We thus fix plus signs in \eqref{PSinf2} and readily deduce from the equations of the system \eqref{PSinf2} that, in a neighborhood of the origin, we have
\begin{equation}\label{interm6}
\widetilde{w}(\eta_2)\sim C_1\widetilde{x}(\eta_2), \quad |\widetilde{z}(\eta_2)|\sim C_2|\widetilde{w}(\eta_2)|^p, \quad {\rm as} \ \eta_2\to\infty,
\end{equation}
with $C_1$ and $C_2$ positive constants. The first equivalence in \eqref{interm6} implies $X(\xi)\to C_1$, which is equivalent to $\xi\to\xi_0\in(0,\infty)$, thus it gives an alternative proof of the fact that the limit $\eta_2\to\infty$ translates into $\xi\to\xi_0\in(0,\infty)$ (another proof of this fact is found in \cite[Lemma 3.3]{IS25a}). The second equivalence in \eqref{interm6} reads after undoing the change of variable \eqref{change3} as
$$
\lim\limits_{\xi\to\xi_0, \xi<\xi_0}Y(\xi)Z^{1/(p-1)}(\xi)=C_3<0,
$$
which is equivalent to
$$
\lim\limits_{\xi\to\xi_0}f'(\xi)=C_3\xi_0^{-(\sigma+2)/(p-1)-1}<0.
$$
Finally, the definition of $Q_3$ implies that $Y(\xi)\to-\infty$ as $\xi\to\xi_0$, and the finiteness of both $\xi$ and $f'(\xi)$ as $\xi\to\xi_0$ together with the definition of $Y$ in \eqref{PSchange} imply that necessarily $f(\xi_0)=0$, completing the proof.
\end{proof}
We are only left with the critical point $Q_4$, which is of no interest in our analysis, as it follows from the following result.
\begin{lemma}\label{lem.Q4}
There is no orbit connecting the critical point $Q_4$ to the finite part of the phase space associated to the system \eqref{PSsyst}.
\end{lemma}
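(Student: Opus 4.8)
The plan is to place $Q_4$ at the origin of a projective chart and then show that the only trajectories limiting to it lie in the invariant coordinate planes, so that none can reach the interior region $\mathcal{R}_+$. Since $Q_4=(0,0,1,0)$ is characterized by a nonzero $\overline{Z}$-coordinate, the natural chart is the projection on the $Z$-variable, obtained by setting $\hat{x}=X/Z$, $\hat{y}=Y/Z$, $\hat{w}=1/Z$ together with the time rescaling $d\tau=Z\,d\eta$, in the spirit of \cite[Theorem 5(c), Section 3.10]{Pe}. In these coordinates $Q_4$ becomes the origin, the region $\mathcal{R}_+$ is mapped into $\{\hat{x}>0,\ \hat{w}>0\}$, and the planes $\{\hat{x}=0\}$ (image of $\{X=0\}$) and $\{\hat{w}=0\}$ (the equator $\{Z=\infty\}$) are invariant. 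A direct computation shows that the linearization at the origin is nilpotent with all three eigenvalues equal to zero, so the Hartman--Grobman theorem and the stable/unstable manifold theorems give no information whatsoever; this complete degeneracy is precisely what makes the point delicate and forces a genuinely nonlinear argument.

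Rather than desingularize the origin, I would argue by contradiction directly on the original system \eqref{PSsyst}, which turns out to be cleaner to present. Suppose a trajectory contained in $\mathcal{R}_+$ had $Q_4$ in its $\omega$- or $\alpha$-limit set. Undoing \eqref{PSchange}, this means that along the orbit $Z\to+\infty$ while $X/Z\to0$ and $Y/Z\to0$, as $\eta$ tends monotonically from one side to some $\eta^{\ast}\in\mathbb{R}\cup\{\pm\infty\}$. The heart of the argument is the coupling between the second and third equations of \eqref{PSsyst}: since $X=o(Z)$, the second equation reads $Y'=-Z+\frac{p-1}{\sigma+2}XY-Y^2-(N-2)Y+o(Z)$, in which the term $-Z$ is strongly dominant unless $Y$ itself becomes large, while the third equation $Z'=Z[(\sigma+2)+(p-1)Y]$ ties the sign of the growth of $Z$ to the sign of $(\sigma+2)+(p-1)Y$.

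I would then rule out, case by case, every possible behavior of $Y$ along such an approach. If $Y$ stays bounded, then $Y'\sim-Z\to-\infty$, forcing $Y\to-\infty$, a contradiction. If $Y\to-\infty$, then $(\sigma+2)+(p-1)Y<0$ eventually, so $Z'<0$ and $Z$ is decreasing, which is incompatible with a one-sided approach in which $Z\to+\infty$. The remaining and most delicate case is $Y\to+\infty$ (which also covers the possibility of a finite $\eta^{\ast}$ at which $Z$ blows up): here one compares the logarithmic growth rates $\frac{d}{d\eta}\ln Y$ and $\frac{d}{d\eta}\ln Z$ read off from \eqref{PSsyst}, and the constraints $X/Z\to0$ and $Y/Z\to0$ force $\frac{d}{d\eta}\ln Y$ to eventually dominate $\frac{d}{d\eta}\ln Z$, so that $Y/Z$ cannot tend to $0$, again a contradiction. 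Since all cases are excluded, no orbit connects $Q_4$ to $\mathcal{R}_+$.

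The main obstacle is exactly this last case and, more broadly, the complete degeneracy of the linearization at $Q_4$: because the linear part vanishes identically, the proof cannot rely on invariant-manifold theory and must instead control the competing scales of $X$, $Y$ and $Z$ as they diverge. Making the relative-growth comparison fully rigorous---equivalently, carrying out a quasi-homogeneous directional blow-up of the origin in the projective chart---is the technical crux. I note that these computations run exactly parallel to those for the corresponding point at infinity in the quasilinear analysis of \cite{IS25a} upon setting $m=1$, so that reference can be invoked to shorten the argument and to justify that the connecting trajectories are confined to the invariant planes $\{\hat{x}=0\}$ and $\{\hat{w}=0\}$.
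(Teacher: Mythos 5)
Your reduction of the problem coincides with the paper's: entering $Q_4$ is encoded by the limits $Z\to\infty$, $Z/X\to\infty$, $|Z/Y|\to\infty$ along the orbit, and the paper, like you, never computes the (indeed degenerate) linearization at $Q_4$; it converts these limits into conditions on the profile $f$ and then cites \cite[Appendix]{IMS25} (with $m=1$) for the complete proof that no solution of \eqref{SSODE} is compatible with them. So your overall strategy is the paper's. However, the self-contained case analysis that forms the core of your text has genuine gaps. First, the trichotomy ``$Y$ bounded / $Y\to-\infty$ / $Y\to+\infty$'' is not exhaustive: the constraints $Z\to\infty$, $Y/Z\to 0$ do not prevent $Y$ from oscillating without a limit (e.g.\ sweeping between values of order $1$ and values that are $o(Z)$), and this oscillatory regime is exactly where such arguments require real work (compare the oscillation arguments invoked in the proofs of Lemmas \ref{lem.Z0} and \ref{lem.X0}, which examine the extremal points where $Y'=0$). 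Second, your disposal of the case $Y\to-\infty$ fails in the backward direction: if $Q_4$ is the $\alpha$-limit, then $Z'<0$ (i.e.\ $Z$ decreasing in $\eta$) is precisely what is \emph{compatible} with $Z\to+\infty$ as $\eta\to-\infty$, so no contradiction arises from the sign of $Z'$ there. The contradiction must instead come from the sign of $Y'$: since $Y'\sim -(Z+Y^2)<0$ eventually, $Y$ is decreasing, hence bounded below as $\eta\to-\infty$, which contradicts $Y\to-\infty$.

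Third, in the case $Y\to+\infty$, which you yourself identify as the crux, the claimed domination of $\frac{d}{d\eta}\ln Y$ over $\frac{d}{d\eta}\ln Z$ does not follow from $X/Z\to0$ and $Y/Z\to0$ alone: the difference $\frac{d}{d\eta}\ln Y-\frac{d}{d\eta}\ln Z$ contains the competing terms $+\frac{p-1}{\sigma+2}X$ and $-Z/Y-pY$, and the stated constraints do not fix their relative size (for instance $X\sim Z^{9/10}$, $Y\sim Z^{1/2}$ makes the difference positive, while $X\sim Z^{1/10}$, $Y\sim Z^{1/2}$ makes it negative). A correct treatment has to use the exact rate $X'=2X$ (so $X$ grows at a fixed exponential rate) against the super-exponential growth of $Z$ forced by $(\ln Z)'=\sigma+2+(p-1)Y\to\infty$ in the forward case, and the sign of $Z'$ for finite-$\eta$ or backward approaches. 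As written, your argument therefore does not close; if instead you fall back on the citation in your last paragraph, note that what must be cited is the detailed non-existence proof of \cite[Appendix]{IMS25} specialized to $m=1$ (which is what the paper does), not a statement about trajectories being confined to invariant planes, which is not the assertion of the lemma.
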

\begin{proof}
Following similar ideas as in our previous works \cite{IS25a, IMS25}, we prove this result directly on the equation \eqref{SSODE}. Indeed, assume for contradiction that there is such a trajectory. The fact that it enters the critical point $Q_4$ is equivalent to the following limits along the trajectory:
$$
\lim\limits_{\eta\to\infty}Z(\eta)=\infty, \qquad \lim\limits_{\eta\to\infty}\frac{Z(\eta)}{X(\eta)}=\infty, \quad \lim\limits_{\eta\to\infty}\left|\frac{Z(\eta)}{Y(\eta)}\right|=\infty,
$$
which, in terms of profiles, become
\begin{equation}\label{interm10}
\xi^{\sigma}f(\xi)^{p-1}\to\infty, \qquad \xi^{\sigma+2}f(\xi)^{p-1}\to\infty,  \qquad \frac{f'(\xi)}{\xi^{\sigma+1}f(\xi)^{p}}\to0,
\end{equation}
when either $\xi\to\xi_0\in[0,\infty)$ or $\xi\to\infty$. The rest of the proof consists in showing that there is no solution to the differential equation \eqref{SSODE} satisfying \eqref{interm10}, and we refer the reader to \cite[Appendix]{IMS25} for a complete and detailed proof (by simply letting $m=1$ in the calculations and estimates therein).
\end{proof}
Having thus completed the local analysis of the system \eqref{PSsyst} in a neighborhood of all its critical points, we give next a number of preparatory result related to its global analysis.

\subsection{Some preparatory lemmas}\label{subsec.prep}

We gather in this section the global analysis of some distinguished trajectories lying in the invariant planes $\{X=0\}$ and $\{Z=0\}$ in the phase space associated to the system \eqref{PSsyst}. These trajectories will be later seen as ``limits" (or ``borders") of the manifolds of interest in the general global analysis.
\begin{lemma}\label{lem.X0}
The unique trajectory contained in the unstable manifold of the critical point $P_0$ and in the invariant plane $\{X=0\}$ (denoted by $l_{\infty}$) enters the critical point $P_2$. The unique trajectory contained in the stable manifold of the critical point $P_1$ and in the invariant plane $\{X=0\}$ comes from the critical point $Q_2$.
\end{lemma}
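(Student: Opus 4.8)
The statement concerns two distinguished trajectories in the invariant planes that bound $\mathcal{R}_+$. Since both trajectories live in a two-dimensional invariant plane, the plan is to reduce the full three-dimensional system \eqref{PSsyst} to a planar system on each plane and then carry out a phase-plane analysis. On $\{X=0\}$ the system becomes
\begin{equation*}
Y'=-(N-2)Y-Z-Y^2, \qquad Z'=Z(\sigma+2+(p-1)Y),
\end{equation*}
and on $\{Z=0\}$ it becomes
\begin{equation*}
X'=2X, \qquad Y'=X-(N-2)Y-Y^2+\frac{p-1}{\sigma+2}XY.
\end{equation*}
The critical points relevant to the claim, namely $P_0$, $P_1$, $P_2$, $Q_2$, all lie in these planes, and their local behavior has already been established in Lemmas \ref{lem.P0}, \ref{lem.P1P2}, and \ref{lem.Q23}.

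\textbf{First part: the trajectory $l_\infty$ enters $P_2$.} Here I would work entirely in the invariant plane $\{X=0\}$, where the only critical points are $P_0=(0,0)$, $P_1=(0,-(N-2))$, and $P_2=(-\tfrac{\sigma+2}{p-1},Z_0)$ in $(Y,Z)$-coordinates. By Lemma \ref{lem.P0}, $l_\infty$ is the unique orbit leaving $P_0$ along the eigendirection $e_3=(0,1,-(N+\sigma))$ into the region $Z>0$; so it starts with $Y,Z>0$ and $Z$ increasing. The strategy is to show this orbit is trapped in a bounded invariant region and is forced to converge to $P_2$, the unique remaining equilibrium with $Z>0$. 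The key steps are: (i) identify the nullclines $Z=-Y^2-(N-2)Y$ (the $Y$-nullcline) and $Y=-\tfrac{\sigma+2}{p-1}$ (the $Z$-nullcline) and determine the sign of the vector field in each resulting region; (ii) use these signs to construct a positively invariant region containing $P_2$ and bounded by the nullclines and pieces of the invariant axes; (iii) rule out periodic orbits (e.g.\ via a Dulac/Bendixson argument, or by monotonicity of $Z$ until the orbit crosses the $Z$-nullcline) and invoke Poincaré--Bendixson to conclude convergence to $P_2$. One must check that the orbit cannot escape toward $P_1$ (which by Lemma \ref{lem.P1P2}(a) has its stable manifold in this plane, so one should verify $l_\infty$ is not that stable orbit) nor toward infinity (controlled by the analysis at $Q_2,Q_3,Q_4$ in Lemmas \ref{lem.Q23}--\ref{lem.Q4}, which show no finite orbit connects to $Q_4$ and characterize the nodes $Q_{2,3}$).

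\textbf{Second part: the stable orbit of $P_1$ in $\{X=0\}$ comes from $Q_2$.} Again restricting to $\{X=0\}$, Lemma \ref{lem.P1P2}(a) gives that $P_1$ has a one-dimensional stable manifold in this plane, tangent to the eigenvector $e_3$ (with $\lambda_3=N+\sigma-p(N-2)<0$). Tracing this orbit \emph{backward} in $\eta$, I would show it must originate at a critical point at infinity, and by the classification in Lemmas \ref{lem.Q23}--\ref{lem.Q4} the only candidate in the plane $\{X=0\}$ is the unstable node $Q_2$. The plan is to follow the backward orbit, again using the nullcline structure to confine it, exclude convergence backward to $P_0$ or $P_2$ (each of which has its own local unstable/stable structure already pinned down), and then appeal to the Poincaré hypersphere analysis to identify $Q_2$ as the source. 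The behavior \eqref{beh.Q2} from Lemma \ref{lem.Q23} confirms this orbit corresponds to a profile vanishing at some finite $\xi_0$ with positive slope.

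\textbf{Main obstacle.} I expect the principal difficulty to be the \emph{global} confinement arguments: showing rigorously that $l_\infty$ cannot cross a nullcline in the ``wrong'' direction and escape, and that the backward orbit from $P_1$ does not spiral or wander but lands precisely on $Q_2$. This requires a careful, case-free description of the sign of the planar vector field in every region cut out by the nullclines, together with a clean exclusion of periodic orbits (the Dulac function will have to be chosen with some care because of the factor $Z$ and the coefficient $(p-1)/(\sigma+2)$). The local analyses are all in hand, so the work is entirely in stitching them together into a global picture via Poincaré--Bendixson in each invariant plane.
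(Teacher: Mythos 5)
Your overall architecture (reduce to the planar system \eqref{PSred.X0}, trap the orbit, exclude cycles via Dulac, conclude by Poincar\'e--Bendixson) matches the paper's, but the central confinement step, as you propose it, does not work. First, a sign error: $l_\infty$ leaves $P_0$ tangent to the direction $(-1,N+\sigma)$ in the $(Y,Z)$-plane, i.e.\ into the quadrant $Y<0$, $Z>0$ (this is forced by the requirement $Z>0$), not with ``$Y,Z>0$'' as you state. More seriously, your step (ii) --- building a positively invariant region around $P_2$ bounded by the nullclines $Z=-Y^2-(N-2)Y$ and $Y=-(\sigma+2)/(p-1)$ --- fails because no such region exists: the flow crosses each nullcline in opposite directions on opposite sides of $P_2$. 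Indeed, on the $Y$-nullcline the field is vertical with sign given by $Z'=Z(\sigma+2+(p-1)Y)$, which is negative for $Y<-(\sigma+2)/(p-1)$ and positive for $Y>-(\sigma+2)/(p-1)$; likewise, on the line $Y=-(\sigma+2)/(p-1)$ the sign of $Y'$ flips according to whether one is above or below the parabola. Since $P_2$ is exactly the intersection of the two nullclines, the flow circulates around it, entering any ``nullcline box'' through some arcs and exiting through others; nullcline arcs together with the invariant axes therefore never enclose a trapping region, and without a trapping region Poincar\'e--Bendixson cannot be invoked (the orbit could a priori escape to infinity).

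The idea you are missing (and that the paper uses) is a barrier crossed by the flow in a \emph{single} direction along its entire length: the curve $Z=-(N+\sigma)Y-\frac{N+\sigma}{N-2}Y^2$, which connects $P_0$ to $P_1$ and is an exact trajectory of \eqref{PSred.X0} at the critical value $p=p_S(\sigma)$. For $p>p_S(\sigma)$ the scalar product of the vector field with its normal has one sign on all of $-(N-2)\le Y\le 0$, so the region $\mathcal{R}_0$ between this curve and the invariant axis $\{Z=0\}$ is positively invariant; a quadratic expansion of the unstable manifold of $P_0$ (comparing the coefficient $(N+\sigma)p/(N+2\sigma+2)$ with $(N+\sigma)/(N-2)$) shows that $l_\infty$ actually enters $\mathcal{R}_0$; and then Dulac's criterion with the function $Z^a$, $a=(3-p)/(1-p)$, plus Poincar\'e--Bendixson and the fact that the stable sets of $P_0$ and $P_1$ are unavailable inside $\mathcal{R}_0$, force convergence to $P_2$. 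For the second claim your sketch is closer to the paper, but the substantive step you leave unaddressed is ruling out that the backward orbit from $P_1$ blows up in $Z$ at some finite $Y=y_0$ (a vertical asymptote): the paper excludes this by using the monotonicity of $Z$ to write the orbit as a graph $Y(Z)$ and computing that $\lim_{Z\to\infty}Y'(Z)=-1/(\sigma+2+(p-1)y_0)\neq 0$, after which monotonicity of $Z$, the absence of cycles, and the exclusion of the remaining critical points leave $Q_2$ as the only possible $\alpha$-limit.
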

\begin{proof}
This proof is very similar to the one of \cite[Lemmas 5.1-5.2]{IMS25}, but we give the details here for the sake of completeness, since this result will be very important in the sequel. Let us observe that the system \eqref{PSsyst} reduces in the invariant plane $\{X=0\}$ to
\begin{equation}\label{PSred.X0}
\left\{\begin{array}{ll}Y'=-(N-2)Y-Z-Y^2, \\ Z'=Z(\sigma+2+(p-1)Y),\end{array}\right.
\end{equation}
and both points $P_0=(0,0)$ and $P_1=(-(N-2),0)$ are saddle points for the system \eqref{PSred.X0}. Indeed, a simple calculation together with the Stable Manifold Theorem \cite[Section 2.7]{Pe} shows that the unique trajectory contained in the unstable manifold of $P_0$ gets out tangent to the eigenvector $(-1,N+\sigma)$, thus entering the negative corner $(-\infty,0)\times(0,\infty)$, while the unique trajectory contained in the stable manifold of $P_1$ enters this point tangent to the eigenvector
\begin{equation}\label{eigen.P1}
\overline{w}:=(1,(p+1)(N-2)-(N+\sigma)).
\end{equation}
Consider now the following curve connecting $P_0$ to the critical point $P_1$ and its normal vector
\begin{equation}\label{curve}
Z=-(N+\sigma)Y-\frac{N+\sigma}{N-2}Y^2, \quad \overline{n}:=\left(\frac{N+\sigma}{N-2}(2Y+N-2),1\right).
\end{equation}
Notice (as shown in \cite[Lemma 5.1]{IMS25}) that \eqref{curve} is a trajectory of the system \eqref{PSred.X0} for $p=p_S(\sigma)$. The flow of the system \eqref{PSred.X0} across the curve \eqref{curve} has the direction given by the sign of the scalar product between the vector field of \eqref{PSred.X0} and the normal $\overline{n}$, which gives
$$
F(Y):=\frac{(N+\sigma)(p_S(\sigma)-p)}{N-2}Y^2(Y+N-2)\leq0,
$$
provided $Y\in[-(N-2),0]$. This sign implies that the region
$$
\mathcal{R}_0:=\left\{(Y,Z)\in\real^2:-(N-2)\leq Y\leq0, \ 0\leq Z\leq-\frac{N+\sigma}{N-2}Y(Y+N-2)\right\}
$$
is positively invariant for $p>p_S(\sigma)$. We prove next that the trajectory going out of $P_0$ enters the region $\mathcal{R}_0$ for $p>p_S(\sigma)$. To this end, we proceed as in \cite[Section 2.7]{Shilnikov} and look for a quadratic local approximation of the unstable manifold of $P_0$ in the form
\begin{equation}\label{interm11}
Z=-(N+\sigma)Y+KY^2+o(Y^2).
\end{equation}
We compute the expression of the flow of the system \eqref{PSred.X0} across the curve \eqref{interm11} by calculating the scalar product between the vector field of the system with the normal vector $(N+\sigma-2KY,1)$ and identify $K$ such that the expression of this scalar product be of order $o(Y^2)$. By performing straightforward calculations and then equating to zero the quadratic terms of the resulting expression to find $K$, we conclude that the curve in \eqref{interm11} reads
\begin{equation}\label{curve2}
Z=-(N+\sigma)Y-\frac{(N+\sigma)p}{N+2\sigma+2}Y^2+o(Y^2).
\end{equation}
By comparing the quadratic terms in \eqref{curve} and \eqref{curve2} and observing that
$$
\frac{(N+\sigma)p}{N+2\sigma+2}-\frac{N+\sigma}{N-2}=\frac{N+\sigma}{N+2\sigma+2}(p-p_S(\sigma))>0,
$$
we readily conclude that the trajectory \eqref{curve2} goes out of $P_0$ towards the interior of the region $\mathcal{R}_0$. It then follows that the trajectory contained in the unstable manifold of $P_0$ remains forever in the compact region $\overline{\mathcal{R}}_0$.

We prove next that the system \eqref{PSred.X0} does not have limit cycles inside the region $\mathcal{R}_0$. To this end, we employ the Dulac's Criterion \cite[Theorem 2, Section 3.9]{Pe}. We pick $a\in\real$ to be chosen later and compute the divergence
\begin{equation*}
\begin{split}
D(Y,Z):&=\frac{d}{dY}(Z^aY')+\frac{d}{dZ}(Z^aZ')=\frac{d}{dY}(-Y^2Z^a-(N-2)YZ^a-Z^{a+1})\\
&+\frac{d}{dZ}((\sigma+2)Z^{a+1}+(p-1)Z^{a+1}Y)\\
&=[(a+1)(p-1)-2]YZ^{a}+[(a+1)(\sigma+2)-(N-2)]Z^a.
\end{split}
\end{equation*}
Choosing $a=(3-p)/(1-p)$, we obtain
$$
D(Y,Z)=\left[\frac{2(\sigma+2)}{p-1}-(N-2)\right]Z^a=\frac{N-2}{p-1}(p_S(\sigma)-p)Z^a<0,
$$
since $p>p_S(\sigma)$. This proves the non-existence of limit cycles in the region $\mathcal{R}_0$.

We come back next to the stable manifold of $P_1$. It is easy to check that, for $p>p_S(\sigma)$, the eigenvector $\overline{w}$ defined in \eqref{eigen.P1} points into the complementary region to $\mathcal{R}_0$, and thus the whole trajectory contained in the stable manifold remains in the region $\real\times(0,\infty)\setminus\overline{\mathcal{R}}_0$. The stability of $P_2$, the fact that $P_2$ lies inside $\mathcal{R}_0$, the non-existence of limit cycles inside $\mathcal{R}_0$, the previous result related to the stable manifold of $P_1$ and an application of the Poincar\'e-Bendixon's Theorem (see for example \cite[Section 3.7]{Pe}) imply that the trajectory contained in the unstable manifold of $P_0$ and the plane $\{X=0\}$ enters the critical point $P_2$. 

We are now left to analyze the trajectory contained in the stable manifold of $P_1$. The flow on the vertical line $Y=-(N-2)$ (with negative sign) shows that the trajectory entering $P_1$ on its stable manifold comes from the region $\{Y>-(N-2)\}$. Observe then that the second equation in \eqref{PSred.X0} entails that $Z$ decreases while $Y<-(\sigma+2)/(p-1)$ and increases when $Y>-(\sigma+2)/(p-1)$. Assume for contradiction that there is $y_0\in[-(N-2),-(\sigma+2)/(p-1)]$ such that the trajectory entering $P_1$ presents a vertical asymptote as $Y\to y_0$. The monotonicity of the $Z$ variable and the inverse function theorem imply that the curve can thus be written as a graph of a function $Y(Z)$, with derivative
$$
Y'(Z)=-\frac{(N-2)Y(Z)+Z+Y^2(Z)}{Z[\sigma+2+(p-1)Y(Z)]}, \quad Z\in(0,\infty), \quad Y(Z)\in(-(N-2),y_0).
$$
Letting $Z\to\infty$ and $Y\to y_0$, $Y<y_0$, we find that
$$
\lim\limits_{Z\to\infty}Y'(Z)=-\frac{1}{\sigma+2+(p-1)y_0}\in\left[\frac{1}{(p-1)(N-2)-(\sigma+2)},\infty\right],
$$
which contradicts the existence of a vertical asymptote in $Z$ as $Y\to y_0$, since in this case the previous limit must be equal to zero. The latter implies that the trajectory contained in the stable manifold of $P_0$ crosses the vertical line $Y=-(\sigma+2)/(p-1)$ and thus arrives from the region $\{Y>-(\sigma+2)/(p-1)\}$, in which $Z$ is increasing. The non-existence of either limit cycles (due to the monotonicity of $Z$) and of other critical points than $Q_2$ (once we know that the unstable manifold of $P_0$ points into the interior of $\mathcal{R}_0$, while our trajectory remains outside $\mathcal{R}_0$) readily imply, by an argument of contradiction, that this trajectory comes from the critical point at infinity $Q_2$, completing the proof.
\end{proof}

We plot in Figure \ref{fig1} a visual example of the phase plane associated to the system \eqref{PSred.X0}.

\begin{figure}[ht!]
  \begin{center}
  \includegraphics[width=8.5cm,height=6cm]{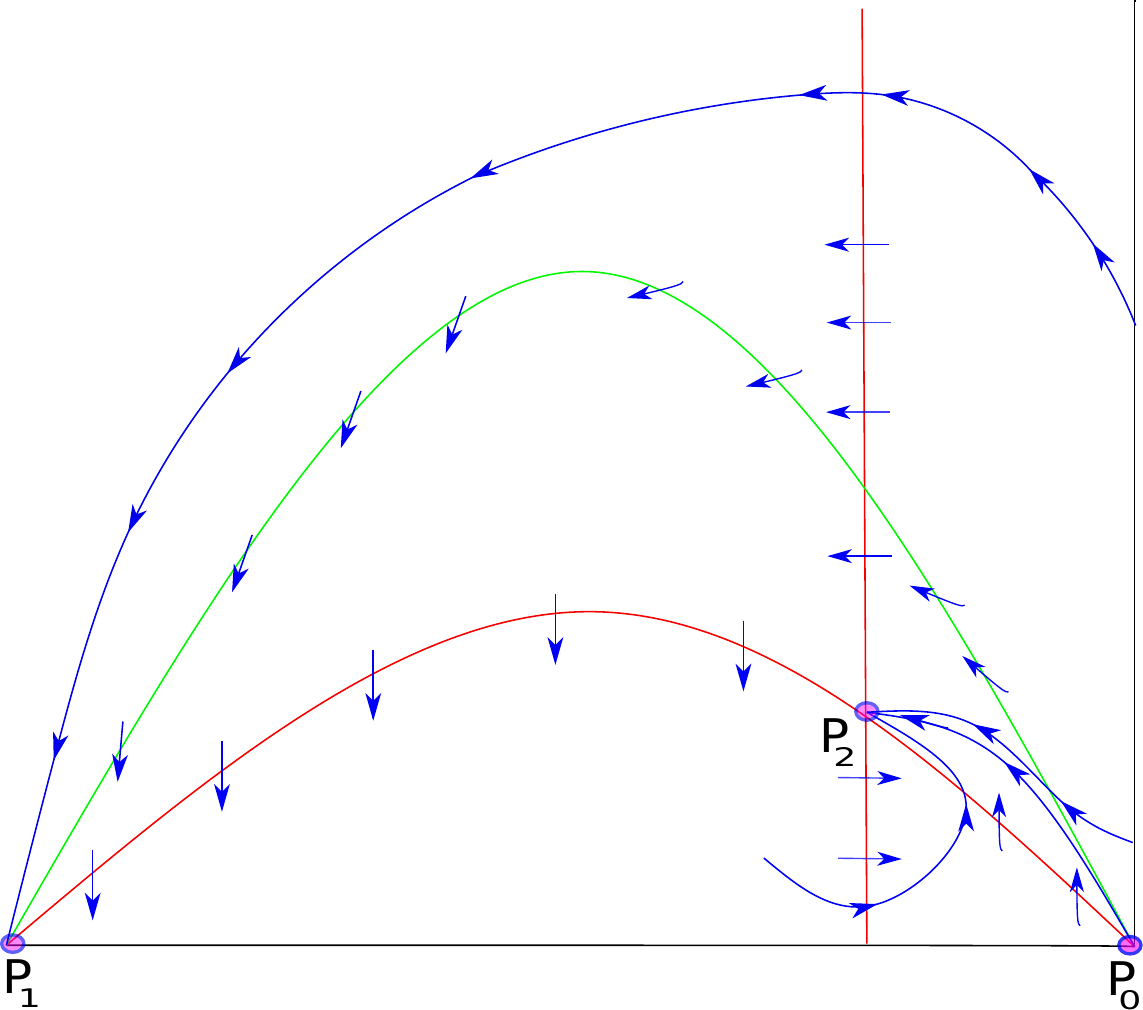}
  \end{center}
  \caption{Trajectories in the invariant plane $\{X=0\}.$}\label{fig1}
\end{figure}

We next study the unique trajectory contained in the unstable manifold of $P_0$ and in the invariant plane $\{Z=0\}$. Noticing that the system \eqref{PSsyst} reduces in the invariant plane $\{Z=0\}$ to the system
\begin{equation}\label{PSred.Z0}
\left\{\begin{array}{ll}X'=2X, \\ Y'=X-(N-2)Y-Y^2+\frac{p-1}{\sigma+2}XY,\end{array}\right.
\end{equation}
we can state the following result:
\begin{lemma}\label{lem.Z0}
The critical point $P_0=(0,0)$ in the system \eqref{PSred.Z0} is a saddle point and the unique trajectory contained in its unstable manifold (denoted by $l_0$) connects to the critical point at infinity $Q_5$. The unique trajectory entering $Q_1$ in the plane $\{Z=0\}$ on its center manifold comes from the critical point $P_1$.
\end{lemma}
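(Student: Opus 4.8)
The plan is to argue throughout in the invariant plane $\{Z=0\}$, where \eqref{PSsyst} reduces to the planar system \eqref{PSred.Z0}. Its first equation $X'=2X$ decouples, so $X(\eta)=X(0)\ee^{2\eta}$ is strictly increasing along every orbit in $\{X>0\}$: each such orbit emanates from the invariant line $\{X=0\}$ as $\eta\to-\infty$ and has $X\to+\infty$ as $\eta\to+\infty$. To begin, I would record that $P_0=(0,0)$ is a saddle for \eqref{PSred.Z0}, its linearization being the $(X,Y)$-block of $M(P_0)$ with eigenvalues $2>0$ and $-(N-2)<0$ and unstable eigenvector $(N,1)$. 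Hence the unstable manifold is one-dimensional and reduces to the single orbit $l_0$ leaving $P_0$ into $\{X>0\}$ tangent to $(N,1)$; in particular $Y/X\to 1/N$ as one leaves $P_0$.

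To prove that $l_0$ connects to $Q_5$, write $c:=(p-1)/(\sigma+2)$ and show first that the sector $\mathcal{T}:=\{X>0,\ 0<Y<cX\}$ is positively invariant. On $\{Y=0\}$ one has $Y'=X>0$, while on $\{Y=cX\}$ the two quadratic terms of $Y'$ cancel and $Y'=X(1-(N-2)c)$, so the field dotted with the outward normal $(-c,1)$ equals $X(1-Nc)$. Since $p>p_S(\sigma)$ gives $Nc>1$ (I would check $p_S(\sigma)>(N+\sigma+2)/N$ from $N(N+2\sigma+2)-(N-2)(N+\sigma+2)=(N+2)(\sigma+2)>0$), this is negative and the field points inward; as $1/N<c$, the orbit $l_0$ enters $\mathcal{T}$. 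Because $X$ increases strictly, \eqref{PSred.Z0} has no periodic orbit and no finite equilibrium inside $\mathcal{T}$, so the Poincar\'e--Bendixson theorem \cite[Section 3.7]{Pe}, applied in the compactified phase plane, forces $\omega(l_0)$ to be one of the only two equilibria at infinity bordering $\overline{\mathcal{T}}$, namely $Q_1$ (direction $Y/X\to0$) and $Q_5$ (direction $Y/X\to c$). To rule out $Q_1$, I would note that on the ray $\{Y=X/N\}$ one computes $dY/dX=\tfrac1N+\tfrac{X}{2N}(c-\tfrac1N)>\tfrac1N$, so $\{Y>X/N\}$ is forward invariant and $Y/X>1/N$ holds along $l_0$; this excludes the limit $Q_1$ and leaves $l_0\to Q_5$.

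For the second statement, restrict the projected system \eqref{PSinf1} to its invariant plane $\{z=0\}$, where $Q_1$ has a one-dimensional unstable direction along the $y$-axis (eigenvalue $c$) and a one-dimensional center manifold on which, exactly as in the proof of Lemma \ref{lem.Q1}, the reduced flow is $\dot x=-2x^2<0$ and hence stable. By the uniqueness argument used there (via \cite{Sij}) there is a unique orbit $\gamma$ in $\{Z=0,\ X>0\}$ entering $Q_1$ along this center manifold; moreover \eqref{cmf} gives, on $\{z=0\}$, the expansion $Y/X=-\tfrac1c\,X^{-1}+O(X^{-2})$, so that $Y\to-(\sigma+2)/(p-1)$ and $Y<0$ along $\gamma$ near $Q_1$. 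Using $p>p_S(\sigma)$ once more to get $(N-2)c>1$, the flow on $\{Y=0\}$ and on $\{Y=-(N-2)\}$ points, in backward time, into the strip $\mathcal{S}:=\{X>0,\ -(N-2)<Y<0\}$, which makes $\mathcal{S}$ backward invariant; since $-(\sigma+2)/(p-1)\in(-(N-2),0)$, the orbit $\gamma$ lies eventually in $\mathcal{S}$ and therefore remains there for all earlier $\eta$, with $X\to0$. Thus $\gamma$ is bounded as $\eta\to-\infty$ and its $\alpha$-limit is an equilibrium on the segment $\{X=0,\ -(N-2)\le Y\le0\}$, i.e. $P_0$ or $P_1$. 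As the whole unstable manifold of $P_0$ in $\{Z=0\}$ is $l_0$ and $l_0\to Q_5\ne Q_1$, the case $P_0$ is impossible, so $\gamma$ comes from $P_1$, as claimed.

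The hard part will be the careful treatment at infinity in the first statement: justifying the compactified Poincar\'e--Bendixson dichotomy and confirming that $Q_1$ and $Q_5$ are the only equilibria on the arc at infinity bounding $\mathcal{T}$, the nodes $Q_2,Q_3$ corresponding to the excluded directions $Y/X\to\pm\infty$ and the points $Q_4,Q_\gamma$ lying off $\{Z=0\}$. Everything else reduces to the elementary sign computations on the lines $\{Y=0\}$, $\{Y=cX\}$, $\{Y=X/N\}$ and $\{Y=-(N-2)\}$ indicated above.
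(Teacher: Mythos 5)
Your route is genuinely different from the paper's: the paper proves $l_0\to Q_5$ by first showing $Y/X$ stays bounded (otherwise the orbit would enter a neighborhood of the unstable node $Q_2$), then passing to the chart \eqref{PSinf1} and running an oscillation argument that combines the attractivity of $Q_5$ (Lemma \ref{lem.Q5}) with the center-manifold equation \eqref{cmf} at $Q_1$; your invariant sector $\mathcal{T}$ plus Poincar\'e--Bendixson at infinity replaces all of that, and your sign computations on $\{Y=0\}$, $\{Y=cX\}$ and $\{Y=-(N-2)\}$ (with $c=(p-1)/(\sigma+2)$) are correct. However, your exclusion of $Q_1$ has a genuine gap: the ray $\{Y=X/N\}$ is precisely the unstable eigendirection of $P_0$, so $l_0$ leaves $P_0$ \emph{tangent} to it, and forward invariance of $\{Y>X/N\}$ alone does not place $l_0$ inside that region. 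Nothing you wrote prevents $l_0$ from remaining in the subsector $\{0<Y<X/N\}$ for all time (an orbit there can never cross the ray downward, but it need not cross it at all), in which case your argument no longer rules out $\omega(l_0)=Q_1$. Two repairs are available: (i) compute the quadratic term of the unstable manifold, $Y=X/N+aX^2+O(X^3)$; matching coefficients in \eqref{PSred.Z0} gives $(N+2)a=c/N-1/N^2$, i.e. $a=(Nc-1)/\bigl(N^2(N+2)\bigr)>0$, so $l_0$ does lie above the ray; or, cleaner, (ii) discard the ray entirely and argue as the paper does: at $Q_1$ the in-plane eigenvalues are $0$ and $c>0$, so any orbit with $x>0$ converging to $Q_1$ must do so along the center manifold \eqref{cmf}, on which $y=Y/X<0$, impossible since $Y>0$ along $l_0$. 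You invoke exactly this fact in your second paragraph, so fix (ii) costs nothing.

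Separately, the step you defer as ``the hard part'' is needed twice, not once. Both the claim that $\omega(l_0)$ is a single equilibrium at infinity (rather than the whole closed equator arc from $Q_1$ to $Q_5$, which is a single orbit of the compactified flow, as $\dot y=-y^2+cy$ on $\{x=0\}$ shows) and the claim in your last paragraph that the $\alpha$-limit of $\gamma$ is an equilibrium (rather than the whole invariant segment $\{X=0,\ -(N-2)\le Y\le 0\}$, which consists of $P_0$, $P_1$ and the orbit connecting them) require the classification of planar limit sets: a compact limit set containing regular points and finitely many equilibria must be cyclically connected, and neither the arc nor the segment can be, since a return orbit would have to lie in the finite plane where $X$ is strictly monotone. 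This should be spelled out; the paper's substitute is the explicit oscillation argument (local minima and maxima of $y$ forced to $0$ and $c$) combined with Lemma \ref{lem.Q5}. With these two repairs your proof is correct; in particular your exclusion of $P_0$ as $\alpha$-limit of $\gamma$ (the only orbit in its unstable manifold with $X>0$ is $l_0$, which has $Y>0$ and ends at $Q_5$) is a clean alternative to the paper's terse ending of the second statement.
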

\begin{proof}
It is obvious that $P_0$ is a saddle point for the system \eqref{PSred.Z0} and that the unique trajectory contained in its unstable manifold comes out tangent to the eigenvector $(N,1)$, thus it enters the positive cone $(0,\infty)^2$ of the phase plane associated to the system \eqref{PSred.Z0}. The direction of the flow of the system \eqref{PSred.Z0} across the axis $Y=0$ is given by the sign of $X$, that is, points into the positive direction, which implies that $(0,\infty)^2$ is positively invariant. Since $X'=2X>0$, it follows that $X(\eta)\to\infty$ as $\eta\to\infty$. Assume next for contradiction that $Y(\eta)/X(\eta)$ is unbounded along the trajectory, that is, there is a sequence $\{\eta_j\}_{j\geq1}$ such that
$$
\lim\limits_{j\to\infty}\frac{Y(\eta_j)}{X(\eta_j)}=\infty.
$$
The latter limit implies that the trajectory enters any neighborhood of the critical point $Q_2$, which leads to a contradiction since $Q_2$ is an unstable node. Thus, $Y/X$ is bounded along the trajectory. Employing next the change of variable \eqref{change2}, the trajectory under consideration is then seen in the system \eqref{PSinf1} as a trajectory such that $x(\eta_1)\to0$ in a decreasing way, while $y(\eta_1)$ remains bounded as $\eta_1\to\infty$. Assume next for contradiction that $y(\eta_1)$ has no limit as $\eta_1\to\infty$; that is, there are sequences $\{\eta_{1,j}^{m}\}_{j\geq1}$, respectively $\{\eta_{1,j}^{M}\}_{j\geq1}$ such that
$$
\lim\limits_{j\to\infty}\eta_{1,j}^m=\lim\limits_{j\to\infty}\eta_{1,j}^M=\infty, \quad y'(\eta_{1,j}^m)=y'(\eta_{1,j}^M)=0, \quad \eta_{1,j}^m<\eta_{1,j}^M<\eta_{1,j+1}^m
$$
and $\eta_{1,j}^m$, respectively $\eta_{1,j}^M$, are local minima, respectively local maxima, for $y(\eta_1)$ along the trajectory. Since $x(\eta_1)\to0$ as $\eta_1\to\infty$, we infer from the boundedness and positivity of $y(\eta_1)$ and the second equation in \eqref{PSinf1} (with $z=0$) that
\begin{equation}\label{interm12}
\lim\limits_{j\to\infty}y(\eta_{1,j}^m)=0, \quad \lim\limits_{j\to\infty}y(\eta_{1,j}^M)=\frac{p-1}{\sigma+2},
\end{equation}
see the proof of \cite[Proposition 4.10]{ILS24b} for a rigorous argument leading to the previous conclusion. But \eqref{interm12} produces a contradiction with the stability of the critical point $Q_5$. Indeed, as it follows from Lemma \ref{lem.Q5}, in the reduced system obtained by the restriction to the invariant plane $\{z=0\}$ of the system \eqref{PSinf1}, the point $Q_5$ has a two-dimensional center-stable manifold and thus acts as an attractor, thus attracting any trajectory reaching a neighborhood of it. We infer from this contradiction that either $y(\eta_1)\to0$ or $y(\eta_1)\to(p-1)/(\sigma+2)$ as $\eta_1\to\infty$. But the former case implies that the trajectory contained in the unstable manifold of $P_0$ enters the critical point $Q_1$ on its center manifold arriving from the positive cone $(X,Y)\in(0,\infty)^2$, a fact that is impossible in view of the local analysis of the critical point $Q_1$ performed in Lemma \ref{lem.Q1}, where the equation of the center manifold given in \eqref{cmf} entails that necessarily $y<0$ in a neighborhood of $Q_1$ on all the trajectories entering $Q_1$. We are left with $y(\eta_1)\to(p-1)/(\sigma+2)$ as $\eta_1\to\infty$, which proves that the trajectory from $P_0$ goes to $Q_5$.

Let us now shoot backwards from $Q_1$ and consider the unique trajectory contained in its (one-dimensional in the reduced system \eqref{PSred.Z0}) center manifold. It is easy to see from \eqref{cmf} and the flow on the axis $Y=0$ that the trajectory entering $Q_1$ on the center manifold arrives from the negative part $Y<0$ of the phase plane associated to the system \eqref{PSred.Z0}. The monotonicity of $X$ and a similar argument of oscillations as above show that this trajectory has to have a critical point as $\alpha$-limit, and the only possible one is $P_1$, completing the proof.
\end{proof}
For the easiness of the reading, we plot in Figure \ref{fig2} the configuration of the phase plane associated to the system \eqref{PSred.Z0}. 

\begin{figure}[ht!]
  \begin{center}
  \includegraphics[width=8.5cm,height=7cm]{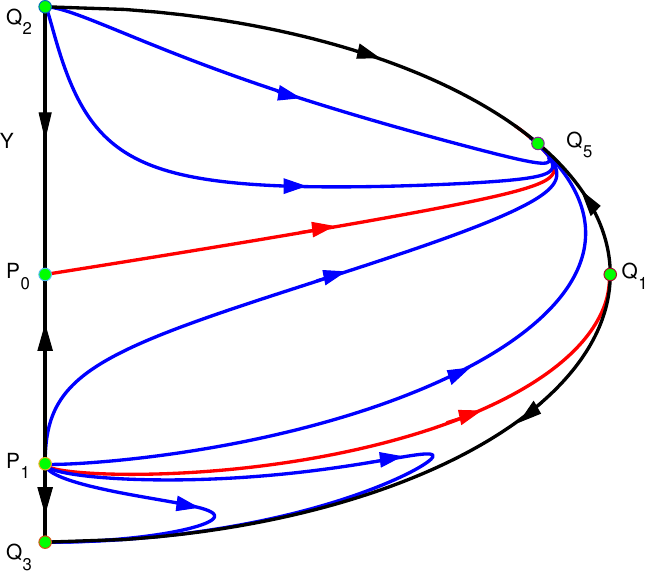}
  \end{center}
  \caption{Trajectories in the invariant plane $\{Z=0\}.$}\label{fig2}
\end{figure}

The last preparatory result is related to trajectories contained in the invariant plane $\{x=0\}$ of the system \eqref{PSinf1}. Since this system decouples, we replace the variable $z$ in \eqref{PSinf1} by $w=xz\geq0$ and only then let $x=0$ in the resulting system. We are thus left with the reduced system
\begin{equation}\label{PSsystw0}
\left\{\begin{array}{ll}\dot{y}=-y^2+\frac{p-1}{\sigma+2}y-w,\\ \dot{w}=(p-1)yw,\end{array}\right.
\end{equation}
where derivatives are taken with respect to the independent variable $\eta_1$ given in \eqref{indep2}, with finite critical points $Q_1'=(0,0)$ and $Q_5'=((p-m)/(\sigma+2),0)$, which can be seen as the ``restrictions'' of the true critical points $Q_1$ and $Q_5$ to the system \eqref{PSsystw0}.
\begin{lemma}\label{lem.w0}
The critical point $Q_5'$ is a saddle point in the system \eqref{PSsystw0} and the critical point $Q_1'$ is a saddle-node in the system \eqref{PSsystw0}. The unique orbit contained in the unstable manifold of $Q_5'$ connects to the critical point $Q_3$, with $y(\eta_1)$ decreasing for $\eta_1\in\real$. The orbits going out of $Q_1'$ connect all of them to the critical point $Q_3$ as well, with the function $y(\eta_1)$ having at most one maximum point along these orbits.
\end{lemma}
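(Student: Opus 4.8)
The plan is to read off the linear parts at the two finite critical points of \eqref{PSsystw0} and then control the global flow in the physical half-plane $\{w\geq0\}$ by means of its two nullclines. The Jacobian of \eqref{PSsystw0} is $\begin{pmatrix}-2y+\frac{p-1}{\sigma+2}&-1\\(p-1)w&(p-1)y\end{pmatrix}$. At $Q_5'=(\frac{p-1}{\sigma+2},0)$ it is upper triangular with eigenvalues $\lambda_2=-\frac{p-1}{\sigma+2}<0$ and $\lambda_3=\frac{(p-1)^2}{\sigma+2}>0$, so $Q_5'$ is a saddle; the stable eigenvector is $(1,0)$ (consistent with the invariance of $\{w=0\}$) and the unstable one is $(1,-\frac{p(p-1)}{\sigma+2})$. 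At $Q_1'=(0,0)$ the eigenvalues are $\frac{p-1}{\sigma+2}>0$ and $0$, with unstable eigenvector $(1,0)$ and center eigenvector $(1,\frac{p-1}{\sigma+2})$. To decide the nature of $Q_1'$ I would reduce to the center manifold exactly as in the proof of Lemma \ref{lem.Q1}: writing $(y,w)=(u+v,\frac{p-1}{\sigma+2}v)$ and seeking $u=h(v)=O(v^2)$ via \cite{Carr}, the reduced equation becomes $\dot v=(p-1)v^2+O(v^3)$. Since the quadratic coefficient $p-1$ is nonzero, $Q_1'$ is a saddle-node, and because $v>0$ corresponds to $w>0$, the parabolic (node-like) sector lies in the physical region $\{w>0\}$ and is repelling, so a one-parameter family of orbits leaves $Q_1'$ into $\{w>0\}$.

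Next I would organize the phase portrait. The $y$-nullcline is the parabola $\mathcal P:\ w=y\big(\tfrac{p-1}{\sigma+2}-y\big)$, the $w$-nullcline in $\{w>0\}$ is the line $\{y=0\}$, and $\{w=0\}$ is invariant. The decisive observation is that on the arc of $\mathcal P$ with $0<y<\frac{p-1}{\sigma+2}$ the field is purely vertical with $\dot w=(p-1)yw>0$, so every orbit crosses $\mathcal P$ \emph{only upwards}, from $\{\dot y>0\}$ into $\{\dot y<0\}$; consequently $\dot y$ can change sign along any orbit at most once, and only from $+$ to $-$. A second observation is that for $y<0$ all three terms of $\dot y=-y^2+\frac{p-1}{\sigma+2}y-w$ are negative, so the quadrant $\{y<0,\ w>0\}$ is forward invariant with $\dot y<0$, $\dot w<0$; this is the trapping region feeding $Q_3$. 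Moreover in $\{y>0\}$ one has $\dot w>0$ strictly, which forbids periodic orbits remaining in that region.

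I would then treat the unstable manifold of $Q_5'$. Its branch entering $\{w>0\}$ has slope $-\frac{p(p-1)}{\sigma+2}$, strictly steeper than the slope $-\frac{p-1}{\sigma+2}$ of $\mathcal P$ at $Q_5'$, so this branch leaves $Q_5'$ above $\mathcal P$; being above $\mathcal P$ it has $\dot y<0$, and since $\mathcal P$ is crossed only from below it stays above $\mathcal P$ forever. Hence $y$ decreases monotonically, the orbit crosses $\{y=0\}$ into the trapping quadrant and $y\to-\infty$. Along the way $\frac{dw}{dy}=\frac{(p-1)yw}{-y^2+\frac{p-1}{\sigma+2}y-w}\sim-\frac{p-1}{y}\,w$ as $y\to-\infty$, giving $w\sim C|y|^{-(p-1)}\to0$; identifying the equatorial point $Q_3$ with the direction $y\to-\infty$ (cf. Lemma \ref{lem.Q23}), this is precisely the orbit entering $Q_3$, with $y$ strictly decreasing along the whole orbit, as claimed.

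Finally, for the orbits leaving $Q_1'$ into $\{w>0\}$ the same nullcline picture applies: such an orbit either starts above $\mathcal P$, where $\dot y<0$ from the outset and no maximum of $y$ occurs, or it starts below $\mathcal P$, where $\dot y>0$ until it meets $\mathcal P$, crosses it once (upwards), and thereafter has $\dot y<0$; in this latter case $y$ attains a single maximum at the crossing. In either case, once above $\mathcal P$, the monotone decrease of $y$—together with the exclusion of limit cycles in $\{y>0\}$ and the fact that $y$ cannot converge to an interior point of $\mathcal P$—forces the orbit to reach $\{y=0\}$, enter the trapping quadrant, and converge to $Q_3$ exactly as before; hence all these orbits connect to $Q_3$ with at most one maximum of $y$. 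I expect the main obstacle to be the rigorous justification of the saddle-node sector at $Q_1'$, namely the sign of the quadratic term in the reduced flow and the claim that its entire parabolic sector opens into $\{w>0\}$, together with ruling out that an orbit could linger in the strip $0<y<\frac{p-1}{\sigma+2}$ without ever reaching $\{y=0\}$; both are controlled by the vertical-crossing property of $\mathcal P$ and the strict sign of $\dot w$, but they require some care.
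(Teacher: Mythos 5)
Your proposal is correct in substance, but it is genuinely different from what the paper does: the paper gives no self-contained proof at all, deferring entirely to \cite[Lemma 4.3]{IS25a} with $m=1$, whereas you reconstruct a complete argument inside this paper. Your computations check out: the Jacobian at $Q_5'$ is upper triangular with eigenvalues $-\frac{p-1}{\sigma+2}$ and $\frac{(p-1)^2}{\sigma+2}$ and unstable eigenvector $\bigl(1,-\frac{p(p-1)}{\sigma+2}\bigr)$; the center-manifold reduction at $Q_1'$ in the coordinates $y=u+v$, $w=\frac{p-1}{\sigma+2}v$ does give $\dot v=(p-1)v^2+O(v^3)$, so $Q_1'$ is a saddle-node whose node-like (repelling) sector opens into $\{w>0\}$, the strong unstable orbits being absorbed by the invariant axis $\{w=0\}$; and the slope comparison $\frac{p(p-1)}{\sigma+2}>\frac{p-1}{\sigma+2}$ correctly places the unstable branch of $Q_5'$ above the nullcline parabola $\mathcal P$, after which the one-way (upward) crossing property of $\mathcal P$ and the forward invariance of $\{y<0,\,w>0\}$ give monotonicity of $y$ and the ``at most one maximum'' count. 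What your route buys is an elementary, verifiable proof using only nullclines, monotonicity and trapping regions, in the same spirit as the paper's other phase-plane lemmas (e.g.\ Lemma \ref{lem.X0}); what the paper's route buys is brevity and consistency with the quasilinear case $m\neq1$ treated in the cited reference.

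Two points in your sketch should be tightened to make it airtight. First, the ``no lingering'' step: if an orbit above $\mathcal P$ stayed in $\{y>0\}$ forever, then $y$ and $w$ are both monotone there, so either the orbit converges to an equilibrium, which is impossible since both equilibria have $w=0$ while $w$ is increasing and positive, or $w\to\infty$, which forces $\dot y=-y^2+\frac{p-1}{\sigma+2}y-w\to-\infty$ and contradicts $y\geq0$; the same dichotomy disposes of orbits trapped below $\mathcal P$ (that region is bounded). Second, the identification of the regime $y\to-\infty$, $w\sim C|y|^{-(p-1)}\to0$ with the critical point $Q_3$ is asserted rather than proved: it requires undoing the substitution $w=xz$ and the change of variables \eqref{change2} to place the orbit on the Poincar\'e hypersphere, where $Q_3=(0,-1,0,0)$ is the stable node of Lemma \ref{lem.Q23}; since $Q_3$ attracts a full neighborhood on the sphere, your asymptotics $u/|y|\to0$ for the remaining coordinates suffice, but this reconciliation step should be stated explicitly.
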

The proof is obtained by simply letting $m=1$ in the proof of \cite[Lemma 4.3]{IS25a}, where all the details are given.

\section{The main calculation. Proof of Theorems \ref{th.large} and \ref{th.small}}\label{sec.large}

This section begins with the core of the argument leading to the proof of Theorems \ref{th.large} and Theorem \ref{th.small}. The calculation that follows is similar to the argument employed by Lepin in \cite{Le89, Le90} and then also extended to quasilinear diffusion in \cite[Section 12]{GV97} in order to establish the value of the Lepin exponent \eqref{Lepin.hom}. We start from the differential equation \eqref{SSODE} and, as expected from the local analysis of the phase space performed in Section \ref{sec.syst}, we are looking for self-similar profiles $f$ with a tail given by \eqref{decay.peak} as $\xi\to\infty$. We thus perform the change of variable and function
\begin{equation}\label{var.change}
f(\xi)=\xi^{-(\sigma+2)/(p-1)}g(\xi), \quad \xi=e^{s}, \quad s\in\real.
\end{equation}
Computing first in terms of $\xi$, we find
$$
f'(\xi)=\xi^{-(\sigma+2)/(p-1)}g'(\xi)-\frac{\sigma+2}{p-1}\xi^{-(\sigma+2)/(p-1)}g(\xi)
$$
and
\begin{equation*}
\begin{split}
f''(\xi)&=\frac{\sigma+2}{p-1}\left(\frac{\sigma+2}{p-1}+1\right)\xi^{-(\sigma+2)/(p-1)-2}g(\xi)\\
&-\frac{2(\sigma+2)}{p-1}\xi^{-(\sigma+2)/(p-1)-1}g'(\xi)+\xi^{-(\sigma+2)/(p-1)}g''(\xi).
\end{split}
\end{equation*}
By inserting the previous calculations into \eqref{SSODE} and performing straightforward manipulations, we derive the differential equation solved by $g$, that is,
\begin{equation*}
\begin{split}
\xi^2g''(\xi)&-\left(\frac{2(\sigma+2)}{p-1}-N+1\right)\xi g'(\xi)+\frac{\sigma+2}{p-1}\left(\frac{\sigma+2}{p-1}+2-N\right)g(\xi)\\
&-\frac{1}{2}\xi^3g'(\xi)+g(\xi)^p=0,
\end{split}
\end{equation*}
which, in terms of the new independent variable $s=\ln\,\xi$, reads
\begin{equation}\label{ODE2}
g''(s)+Ag'(s)-\frac{B}{p-1}g(s)+g(s)^p-\frac{1}{2}e^{2s}g'(s)=0,
\end{equation}
with
\begin{equation}\label{coefs}
A=N-2-\frac{2(\sigma+2)}{p-1}, \quad B=(\sigma+2)\left(N-2-\frac{\sigma+2}{p-1}\right).
\end{equation}
We observe that the stationary solution \eqref{stat.sol} reduces to the constant solution $g\equiv C(\sigma)$, where $C(\sigma)$ is the constant defined in \eqref{stat.sol}. We thus linearize \eqref{ODE2} near the constant solution $C(\sigma)$ by setting $g(s)=C(\sigma)+y(s)$. In the previous notation, we readily deduce that the linearization of \eqref{ODE2} is given by the differential equation
\begin{equation}\label{ODE.linear}
y''(s)+Ay'(s)+By(s)-\frac{1}{2}e^{2s}y'(s)=0,
\end{equation}
which fits into the general structure of a differential equation analyzed in \cite{Le89} (with $C=1/2$ and $\gamma=2$ in the notation therein). Observe first that the condition $p>p_{JL}(\sigma)$ is equivalent to $A^2>4B$. The main conclusion of \cite{Le89} is the following: if for a fixed natural number $j\geq0$ we have
$$
4(j+1)<A-\sqrt{A^2-4B}\leq 4(j+2),
$$
then the solution to Eq. \eqref{ODE.linear} satisfying $y(s)\to1$ as $s\to\infty$ exists, is unique, and has exactly $j+2$ zeros (and the same is true for the solution to Eq. \eqref{ODE.linear} satisfying $y(s)\to K$ as $s\to\infty$ for any constant $K>0$, by linearity). We are thus interested to detect for which values of $p$ we have
\begin{equation}\label{main.ineq}
A-\sqrt{A^2-4B}>8, \quad {\rm or \ equivalently}, \quad A-8>\sqrt{A^2-4B},
\end{equation}
a fact that entails existence of solutions either by adapting the proof of \cite[Theorem 12.2]{GV97} or by an independent argument in the phase space associated to the system \eqref{PSsyst}, as we shall see below. The equivalence in \eqref{main.ineq} is justified by the fact that, for $p>p_{JL}(\sigma)$, we always have $A>8$. Indeed, the condition $A>8$ is ensured by taking
$$
p>1+\frac{2(\sigma+2)}{N-10}=:p_1
$$
and, writing $p_{JL}(\sigma)$ in the equivalent form given in \cite{MS21}
\begin{equation}\label{pJL.alt}
p_{JL}(\sigma)=1+\frac{2(\sigma+2)}{N-4-\sigma-\sqrt{(2N+\sigma-2)(\sigma+2)}},
\end{equation}
one can observe after straightforward calculations that $p_{JL}(\sigma)>p_1$ whenever $N>10+4\sigma$. Taking squares in \eqref{main.ineq} and performing easy algebraic manipulations, we obtain that \eqref{main.ineq} leads to
\begin{equation}\label{main.ineq2}
(\sigma-2)(N-2)+16>\frac{(\sigma+2)(\sigma-6)}{p-1}.
\end{equation}
We have then two cases:

$\bullet$ $\sigma\geq2$, which implies that the left-hand side of \eqref{main.ineq2} is positive and thus \eqref{main.ineq2} holds true for any
\begin{equation}\label{interm15}
p>1+\frac{(\sigma+2)(\sigma-6)}{(\sigma-2)(N-2)+16}=\frac{(\sigma-2)(N+\sigma-4)}{\sigma(N-2)-2(N-10)}.
\end{equation}
If we conventionally denote by $p_L(\sigma)$ the number in the right-hand side of \eqref{interm15}, we note that, for $\sigma\geq2$, we have
\begin{equation}\label{interm8}
p_L(\sigma)-p_S(\sigma)=-\frac{(\sigma+2)(\sigma(N-2)+2N+28)}{(N-2)[(\sigma-2)(N-2)+16]}<0,
\end{equation}
which gives that \eqref{main.ineq} holds true for any $\sigma\geq2$ and for any $p>p_S(\sigma)$.

$\bullet$ $\sigma\in(0,2)$, which implies that the sign of the left-hand side of \eqref{main.ineq2} depends on $N$. If
\begin{equation}\label{interm7}
10+4\sigma<N\leq\frac{2(10-\sigma)}{2-\sigma}:=N(\sigma),
\end{equation}
then $(\sigma-2)(N-2)+16\geq0$ and, since $(\sigma+2)(\sigma-6)<0$, it follows again that the inequality \eqref{main.ineq2} and thus also \eqref{main.ineq} hold true for any $p>1$. On the contrary, if
\begin{equation}\label{interm16}
N>\frac{2(10-\sigma)}{2-\sigma},
\end{equation}
then the left-hand side of \eqref{main.ineq2} is negative and thus similar calculations as in the first case show that \eqref{main.ineq} holds true if and only if $p<p_L(\sigma)$, where $p_L(\sigma)$ is defined in \eqref{Lepin}. Let us next prove that $p_{JL}(\sigma)<p_L(\sigma)$, whenever the condition \eqref{interm16} is fulfilled. Letting 
\begin{equation}\label{RD}
R:=\sqrt{(2N+\sigma-2)(\sigma+2)}, \quad D:=N(\sigma-2)-2(\sigma-10),
\end{equation}
we have 
$$
p_L(\sigma)-p_{JL}(\sigma)=-\frac{(\sigma+2)[(\sigma-6)R+\sigma^2+(N-6)\sigma+2N+16]}{(N-4-\sigma-R)D}.
$$
On the one hand, since $N>10+4\sigma$, it is easy to prove by taking squares that $N-4-\sigma-R>0$ (actually, this is the denominator of the exponent $p_{JL}(\sigma)$), while the condition \eqref{interm16} implies that $D<0$. On the other hand, we obtain by direct calculation that 
$$
[\sigma^2+(N-6)\sigma+2N+16]^2-[(6-\sigma)R]^2=(N\sigma+2N+6\sigma-20)^2>0,
$$
which implies that 
$$
\sigma^2+(N-6)\sigma+2N+16>(6-\sigma)R.
$$
Joining all these inequalities, we deduce that $p_L(\sigma)-p_{JL}(\sigma)>0$, as claimed.

Gathering the previous analysis, we conclude that \eqref{main.ineq} holds true if and only if: either $\sigma\geq2$ and $p>p_{JL}(\sigma)$, or $\sigma<2$ and $10+4\sigma<N\leq N(\sigma)$ and $p>p_{JL}(\sigma)$ as well, or $\sigma<2$, $N>N(\sigma)$ and $p_{JL}(\sigma)<p<p_L(\sigma)$, where we recall that $N(\sigma)$ is defined in \eqref{interm7}.

Let now $\sigma\geq 2j$ for some natural number $j\geq2$. We then compare $A-\sqrt{A^2-4B}$ with $4(j+1)$, that is, we are interested to establish for which values of $p$ we have
\begin{equation}\label{main.ineq.mult}
A-\sqrt{A^2-4B}>4(j+1), \quad {\rm or \ equivalently}, \quad A-4(j+1)>\sqrt{A^2-4B},
\end{equation}
noting that the conditions $\sigma\geq 2j$, $p>p_{JL}(\sigma)$ and $N>10+4\sigma$ readily imply that $A>4(j+1)$. Indeed, the latter condition holds true for any
$$
p>p_j:=1+\frac{2(\sigma+2)}{N-6-4j},
$$
and $p_{JL}(\sigma)>p_j$, as it easily follows from the lower bounds on $\sigma$ and $N$. We proceed similarly as in the case of \eqref{main.ineq} by taking squares in \eqref{main.ineq.mult} and perform easy algebraic manipulations to deduce that \eqref{main.ineq.mult} implies the inequality
$$
(N-2)(\sigma-2j)+4(j+1)^2\geq\frac{(\sigma+2)(\sigma-4j-2)}{p-1},
$$
which, taking into account the positivity of the left-hand side (following from the assumption $\sigma\geq2j$), is equivalent to
\begin{equation}\label{main.ineq2.mult}
p\geq p_{L,j}(\sigma):=1+\frac{(\sigma+2)(\sigma-4j-2)}{(N-2)(\sigma-2j)+4(j+1)^2}=\frac{(\sigma-2j)(N-2+\sigma-2j)}{(N-2)(\sigma-2j)+4(j+1)^2}.
\end{equation}
Observing by direct calculation that
$$
p_{L,j}(\sigma)-p_S(\sigma)=-\frac{(\sigma+2)[(N-2)(\sigma+2)+8(j+1)^2]}{(N-2)[(N-2)(\sigma-2j)+4(j+1)^2]}<0,
$$
it follows that \eqref{main.ineq2.mult} and then \eqref{main.ineq.mult} hold true for any $p>p_{JL}(\sigma)$, provided $\sigma\geq2j$.

In the rest of this section we shall prove that \eqref{main.ineq} implies existence of at least a self-similar solution, showing thus the existence part in Theorem \ref{th.large} and proving Theorem \ref{th.small}, while \eqref{main.ineq.mult} implies the existence of at least $j$ different self-similar solutions, establishing thus the multiplicity part in the statement of Theorem \ref{th.large}.
\begin{proof}[Proof of Theorems \ref{th.large} and \ref{th.small}: existence]
We adapt the end of the proof of \cite[Theorem 12.2]{GV97} by building a similar, but self-contained, argument in the phase space associated to the system \eqref{PSsyst}. We thus perform a backward shooting on the center manifold (with stable direction of the flow) of the critical point $Q_1$ and, more precisely, on the trajectories $\mathcal{C}_k$, with $k\in(0,Z_0)$, recalling the reader that $Z_0$ is defined in \eqref{Z0} and that the trajectories $\mathcal{C}_k$ are introduced in the Remark after Lemma \ref{lem.Q1}. We first observe that the number of zeros of $y(s)$ in \eqref{ODE.linear} means the number of intersections between the graph of $g(s)$ solution to \eqref{ODE2} and the constant $C(\sigma)$ or the number of intersections of the profile $f(\xi)$ solution to \eqref{SSODE} and the stationary solution \eqref{stat.sol}. We also remark that there is a one-to-one mapping between $(f,\xi)$ and $(X,Z)$ variables, according to \eqref{PSchange}. Indeed, given $(X,Z)$ as in \eqref{PSchange}, we find that
$$
f=\left[\frac{2(p-1)}{\sigma+2}X\right]^{-(\sigma+2)/2(p-1)}Z^{1/(p-1)}, \quad \xi=\sqrt{\frac{2(p-1)}{\sigma+2}X}.
$$
Thus, an intersection of a solution to \eqref{SSODE} with the profile \eqref{stat.sol} means that the same values of $(X,Z)$ are taken in the phase space of \eqref{PSsyst}, that is, $Z=Z_0$ and $X\in(0,\infty)$ arbitrary. We thus infer that the number of intersections of $g(s)$ solution to \eqref{ODE2} with the constant solution $g\equiv C(\sigma)$ means the number of crossings of the trajectory containing $g$ with the plane $\{Z=Z_0\}$, where $Z_0$ is defined in \eqref{Z0}. Let us also observe that the flow of the system \eqref{PSsyst} across the plane $\{Z=Z_0\}$ (with normal vector $(0,0,1)$) points in the upper direction if $Y>-(\sigma+2)/(p-1)$ and in the lower direction if $Y<-(\sigma+2)/(p-1)$. We thus infer from the analysis of the linearized equation \eqref{ODE.linear} that, whenever \eqref{main.ineq} is in force, the trajectories $\mathcal{C}_k$ with $k\in(Z_0-\delta,Z_0+\delta)$ (for some $\delta>0$ sufficiently small), cross the plane $\{Z=Z_0\}$ at least three times, according to the number of zeros of $y(s)$ solution to \eqref{ODE.linear}. We next introduce the following three sets:
\begin{equation}\label{sets1}
\begin{split}
&\mathcal{U}:=\{k\in(0,Z_0): \mathcal{C}_k \ {\rm crosses} \ \{Z=Z_0\} \ {\rm exactly \ twice \ and \ starts \ from} \ Q_2\},\\
&\mathcal{W}:=\{k\in(0,Z_0): \mathcal{C}_k \ {\rm crosses} \ \{Z=Z_0\} \ {\rm at \ least \ three \ times}\},\\
&\mathcal{V}:=(0,Z_0)\setminus(\mathcal{U}\cup\mathcal{W}).
\end{split}
\end{equation}
We observe that no trajectory $\mathcal{C}_k$ with $k\neq Z_0$ is tangent to the plane $\{Z=Z_0\}$. Indeed, such a tangency only may occur at a point where the flow is equal to zero, that is, at points with $Z=Z_0$ and $Y=-(\sigma+2)/(p-1)$. But such points belong to the trajectory \eqref{stat.sol.plane} which identifies with $\mathcal{C}_{Z_0}$ and, by the uniqueness of a trajectory at a non-critical point, no other orbit $\mathcal{C}_k$ might contain such points. It is then obvious that both $\mathcal{U}$ and $\mathcal{W}$ are open sets by the continuity with respect to the parameter and the stability of $Q_2$ following from Lemma \ref{lem.Q23}. On the one hand, the previous discussion implies that $\mathcal{W}$ is non-empty and in fact there is $\delta>0$ such that $(Z_0-\delta,Z_0)\subseteq\mathcal{W}$. On the other hand, it follows from Lemmas \ref{lem.X0} and \ref{lem.Z0} that the unique trajectory $\mathcal{C}_0$ entering $Q_1$ and contained in the plane $\{Z=0\}$ comes from $P_1$ and the unique trajectory entering $P_1$ on its stable manifold starts from $Q_2$. Thus, a standard argument of continuity (tubular neighborhoods of the two trajectories mentioned above) together with the behavior near a saddle \cite[Theorem 2.9]{Shilnikov} applied in a neighborhood of $P_1$ give that there is $\vartheta>0$ such that, for any $k\in(0,\vartheta)$, the trajectory $\mathcal{C}_k$ arrives from $Q_2$ and lies inside a neighborhood, first, of the trajectory $Q_2-P_1$ and then of the trajectory $\mathcal{C}_0$ connecting $P_1$ to $Q_1$. Thus, for $k\in(0,\vartheta)$, $\mathcal{C}_k$ crosses $\{Z=Z_0\}$ exactly twice (at points lying close to the crossing points of the trajectory $Q_2-P_1$ derived in Lemma \ref{lem.X0}). Thus, $(0,\vartheta)\subseteq\mathcal{U}$.

An elementary topological argument implies then that $\mathcal{V}\neq\emptyset$ and thus there is at least one element $k_1\in\mathcal{V}$. It then follows that $\mathcal{C}_{k_1}$ crosses the plane $\{Z=Z_0\}$ at most twice (otherwise $k_1\in\mathcal{W}$ and a contradiction is reached) and, if crossing it exactly twice, then its $\alpha$-limit is not the critical point $Q_2$ (otherwise $k_1\in\mathcal{U}$ and again a contradiction is reached). Moreover, Lemma \ref{lem.P1P2} proves that the trajectory $\mathcal{C}_{k_1}$ cannot start from the critical point $P_2$. It cannot also be tangent to the plane $\{Z=Z_0\}$; indeed, if we assume that there is $\eta_0\in\real$ such that $Z(\eta_0)=Z_0$ and $Z'(\eta_0)=0$, it follows from the third equation of the system \eqref{PSsyst} that $Y(\eta_0)=-(\sigma+2)/(p-1)$, thus the tangency point is of the form $(X(\eta_0),-(\sigma+2)/(p-1),Z_0)$ and belongs to the line \eqref{stat.sol.plane}. But we deduce from the non-criticality that the unique trajectory passing through points $(X,-(\sigma+2)/(p-1),Z_0)$ with $X>0$ is the explicit line \eqref{stat.sol.plane}, which differs from $\mathcal{C}_{k_1}$. We next prove that $\mathcal{C}_{k_1}$ crosses $\{Z=Z_0\}$ exactly twice. Assume first for contradiction that the orbit does not intersect the plane $\{Z=Z_0\}$, that is, it remains forever in the region $\{Z<Z_0\}$. Since $k_1<Z_0$, we infer from \eqref{cmf} by undoing \eqref{change2} that the center manifold has (with $Z\to k_1<Z_0$) the equation
$$
Y=-\frac{\sigma+2}{p-1}+\frac{\sigma+2}{(p-1)X}(Z-Z_0)+O\left(\frac{1}{X^2}\right)<-\frac{\sigma+2}{p-1},
$$
hence the trajectory $\mathcal{C}_{k_1}$ enters $Q_1$ from the half-space $\{Y<-(\sigma+2)/(p-1)\}$. The flow of the system \eqref{PSsyst} across the plane $\{Y=-(\sigma+2)/(p-1)\}$ points into the positive direction if $Z<Z_0$ and into the negative direction if $Z>Z_0$, hence it follows from the assumption that the trajectory remains forever in the region $\{Z<Z_0\}$ that it also remains in the region $\{Y<-(\sigma+2)/(p-1)\}$, where $Z$ is decreasing with respect to $\eta$. Since $X$ is increasing with respect to $\eta$, an argument of oscillation similar to the one in the proof of \cite[Proposition 4.10]{ILS24} shows that the $\alpha$-limit of the trajectory $\mathcal{C}_{k_1}$ has to be a critical point, and the only possible one is $P_1$ in that region. But the analysis of $P_1$ in Lemma \ref{lem.P1P2} leads to a contradiction. A similar contradiction is obtained if we assume that $\mathcal{C}_{k_1}$ crosses the plane $\{Z=Z_0\}$ exactly once, as there is no critical point lying above the plane $\{Z=Z_0\}$ except for $Q_4$, and Lemma \ref{lem.Q4} prevents the fact that $Q_4$ may be the $\alpha$-limit of $\mathcal{C}_{k_1}$.

We thus infer that the trajectory $\mathcal{C}_{k_1}$ crosses the plane $\{Z=Z_0\}$ exactly twice. Taking into account the flow of the system \eqref{PSsyst} across the plane $\{Y=-(\sigma+2)/(p-1)\}$, we easily infer that the $\alpha$-limit of $\mathcal{C}_{k_1}$ is contained in the region
$$
\mathcal{D}:=\left\{(X,Y,Z)\in\real^3: X\geq0, \ Y>-\frac{\sigma+2}{p-1}, \ 0\leq Z<Z_0\right\},
$$
but it is different from the critical point $Q_2$. Once again, in the region $\mathcal{D}$, the $X$ and $Z$ coordinates are increasing with respect to $\eta$, and an oscillation argument as in the proof of Lemma \ref{lem.Z0} shows that the $\alpha$-limit must be a critical point different from $Q_2$. The only choice remains thus $P_0$, which means that $\mathcal{C}_{k_1}$ connects $P_0$ to $Q_1$, completing the proof.
\end{proof}

We plot in Figures \ref{fig3} and \ref{fig4} the result of numerical experiments confirming the difference of behavior of the trajectories in the phase space and the corresponding profiles $f$, for $\sigma\in(0,2)$ and $p<p_L(\sigma)$, respectively $p>p_L(\sigma)$, where for the simplicity of the pictures, the trajectories are projected onto the $XY$-plane.

\begin{figure}[ht!]
  \begin{center}
  \subfigure[Trajectories in the phase space for $p<p_L(\sigma)$]{\includegraphics[width=7.5cm,height=6cm]{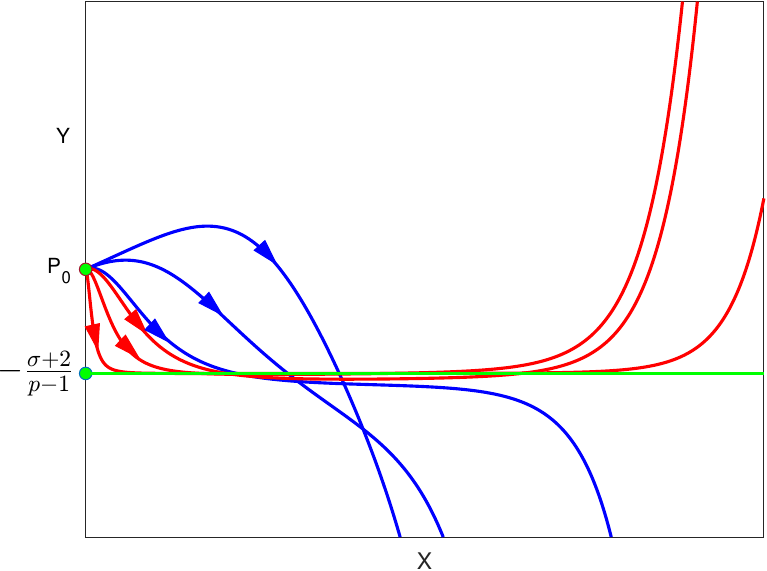}}
  \subfigure[Profiles corresponding to the trajectories for $p<p_L(\sigma)$]{\includegraphics[width=7.5cm,height=6cm]{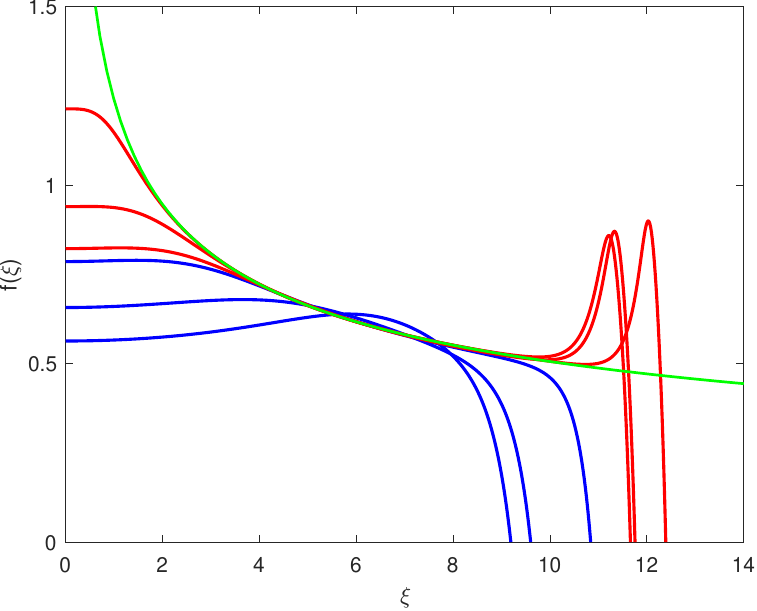}}
  \end{center}
  \caption{Trajectories and profiles for $N=20$, $\sigma=1.5$, $p=10$, where $p_{JL}(\sigma)\approx3.55$ and $p_L(\sigma)=\infty$.}\label{fig3}
\end{figure}

\begin{figure}[ht!]
  \begin{center}
  \subfigure[Trajectories in the phase space for $p>p_L(\sigma)$]{\includegraphics[width=7.5cm,height=6cm]{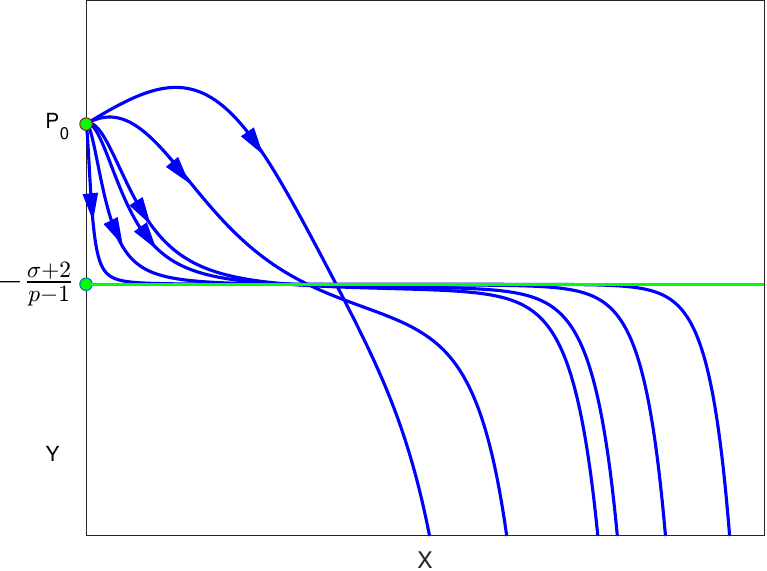}}
  \subfigure[Profiles corresponding to the trajectories for $p>p_L(\sigma)$]{\includegraphics[width=7.5cm,height=6cm]{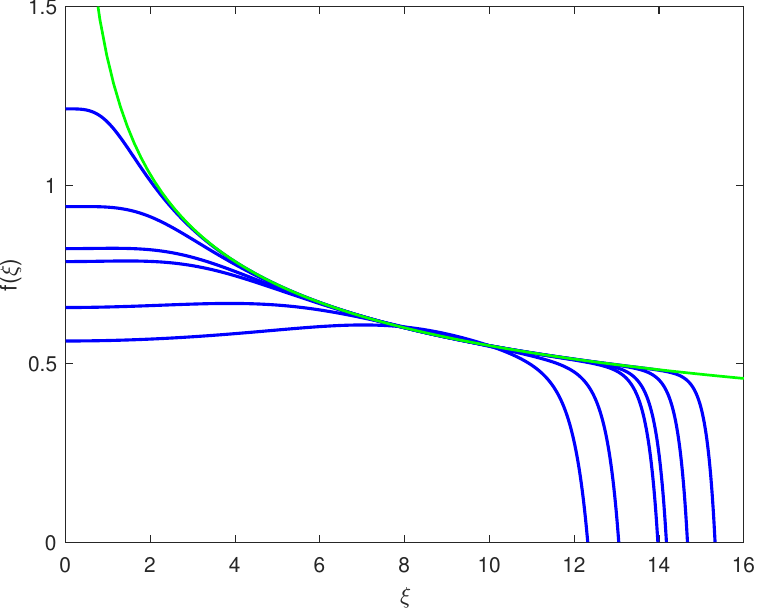}}
  \end{center}
  \caption{Trajectories and profiles for $N=40$, $\sigma=1.5$, $p=10$, where $p_{JL}(\sigma)\approx1.39$ and $p_L(\sigma)=6.25$.}\label{fig4}
\end{figure}

We observe that, in Figure \ref{fig3}, there is a clear difference of behavior of the trajectories and profiles (with, noticeably, a different number of intersections with the stationary solution \eqref{stat.sol}), while all the trajectories and profiles in Figure \ref{fig4} behave in a completely similar way, with the same number of oscillation with respect to the stationary solution \eqref{stat.sol}.

\medskip

Note that we can choose, in the previous proof, $k_1=\inf\mathcal{W}$. Based on this observation, we next prove the multiplicity statement in Theorem \ref{th.large}.
\begin{proof}[Proof of Theorem \ref{th.large}: multiplicity]
We only give here the step of passing from one self-similar solution in backward form \eqref{backwardSS} to two such solutions, the general step being then completely analogous. Assume that $\sigma\geq6$, that is, \eqref{main.ineq.mult} is in force for $j=3$ and thus implies that solutions $y(s)$ to \eqref{ODE.linear} such that $y(s)\to K\in(0,\infty)$ as $s\to\infty$ have exactly five zeros according to the analysis in \cite{Le89}. Let then $k_1=\inf\mathcal{W}\in\mathcal{V}$, where $\mathcal{W}$ and $\mathcal{V}$ are the sets introduced in \eqref{sets1}, hence it follows from the previous proof that $\mathcal{C}_{k_1}$ is a connection between $P_0$ and $Q_1$ crossing exactly twice the plane $\{Z=Z_0\}$. Let us also recall here that the stable manifold of the critical point $P_0$, according to Lemma \ref{lem.P0}, is contained in the $Y$-axis and has two trajectories, one coming from $Q_2$ along the $Y$-axis and the other one from $P_1$. By the definition of $k_1$ as the infimum of the open set $\mathcal{W}$, we infer that there is $\delta>0$ such that $k_1+\delta<Z_0$ and $(k_1,k_1+\delta)\subset\mathcal{W}$. Picking thus $k>k_1$ but sufficiently close to it, and applying a standard argument of continuity and the local behavior near the saddle $P_0$ according to \cite[Theorem 2.9]{Shilnikov}, we have the following alternative for the trajectory $\mathcal{C}_k$:

$\bullet$ either it comes from $Q_2$ following, in a neighborhood of $P_0$, the trajectory $Q_2-P_0$ belonging to the stable manifold of $P_0$. But, in this case, the $Z$ coordinate is increasing along $\mathcal{C}_k$ and thus the trajectory $\mathcal{C}_k$ crosses the plane $\{Z=Z_0\}$ exactly twice, leading to a contradiction to the fact that $k\in\mathcal{W}$. Thus, this alternative is not possible.

$\bullet$ or in a neighborhood of the saddle point $P_0$, the trajectory $\mathcal{C}_k$ arrives, according to \cite[Theorem 2.9]{Shilnikov}, from a neighborhood of the other trajectory contained in the stable manifold of $P_0$, that is, the connection $P_1-P_0$ contained in the $Y$-axis. In this case, one more application of \cite[Theorem 2.9]{Shilnikov} in a neighborhood of the saddle point $P_1$, together with the outcome of Lemma \ref{lem.X0} related to the unique orbit in the stable manifold of $P_1$ arriving from the unstable node $Q_2$, implies that the trajectory $\mathcal{C}_k$ connects $Q_2$ to $Q_1$ (following ``tubular neighborhoods" of the trajectories $Q_2-P_1$, $P_1-P_0$ and then $\mathcal{C}_{k_1}$) with exactly four crossings of the plane $\{Z=Z_0\}$. A rigorous writing of this argument with tubular neighborhoods can be seen in detail in the proof of \cite[Theorem 1.1]{IS25b}.

Let us then introduce the following sets:
\begin{equation*}
\begin{split}
&\mathcal{\widetilde{U}}:=\{k\in(k_1,Z_0): \mathcal{C}_k \ {\rm crosses} \ \{Z=Z_0\} \ {\rm exactly \ four \ times \ and \ starts \ from} \ Q_2\},\\
&\mathcal{\widetilde{W}}:=\{k\in(k_1,Z_0): \mathcal{C}_k \ {\rm crosses} \ \{Z=Z_0\} \ {\rm at \ least \ five \ times}\},\\
&\mathcal{\widetilde{V}}:=(k_1,Z_0)\setminus(\mathcal{\widetilde{U}}\cup\mathcal{\widetilde{W}}).
\end{split}
\end{equation*}
In the previous argument we have just proved that $\mathcal{\widetilde{U}}$ is non-empty and actually contains an interval of the form $(k_1,k_1+\delta)$ for some $\delta>0$. The fact that \eqref{main.ineq.mult} applies with $j=3$ and the analysis of \eqref{ODE.linear} performed in \cite{Le89} imply that there is $\delta'>0$ small such that the solution $g(s)$ to \eqref{ODE.linear} corresponding to a limit $g(s)\to L\in(C(\sigma)-\delta',C(\sigma))$ as $s\to\infty$ has at least five intersections with the constant solution $g\equiv C(\sigma)$ and thus, in terms of our phase space associated to the system \eqref{PSsyst}, the trajectory $\mathcal{C}_k$ with $k\in(Z_0-\delta',Z_0)$ crosses at least five times the plane $\{Z=Z_0\}$, that is, $(Z_0-\delta',Z_0)\subseteq\mathcal{\widetilde{U}}$. This proves the non-emptiness of $\mathcal{\widetilde{U}}$ as well. Since both $\mathcal{\widetilde{U}}$ and $\mathcal{\widetilde{W}}$ are open sets by their definition and the stability of $Q_2$, it follows that $\mathcal{\widetilde{V}}\neq\emptyset$. Picking $k_2\in\mathcal{\widetilde{V}}$, the fact that the trajectory $\mathcal{C}_{k_2}$ connects $P_0$ to $Q_1$ follows completely analogously to the last part of the previous proof, and we omit here the details.

We then observe that we can generalize the previous argument as follows: a trajectory $\mathcal{C}_{k}$ with $k$ in a suitably small neighborhood of $k_2$ will cross the plane $\{Z=Z_0\}$ at most six times (by a similar argument with tubular neighborhoods and local behavior near the saddles $P_0$ and $P_1$), while, if $\sigma\geq10$, then \eqref{main.ineq.mult} is in force for $j=5$ and ensures that trajectories $\mathcal{C}_k$ with $k$ in a suitably small left-neighborhood of $Z_0$ will cross the plane $\{Z=Z_0\}$ at least seven times. This implies, by the same three-sets argument as above, the existence of a third trajectory, $\mathcal{C}_{k_3}$, with $Z_0>k_3>k_2>k_1>0$, connecting $P_0$ to $Q_1$. And in this way, when $\sigma\geq4k-2$, we have at least $k$ different trajectories and thus $k$ different self-similar solutions with decay given by \eqref{decay.peak} as $\xi\to\infty$, as claimed.
\end{proof}

\section{Singular potentials: proof of Theorem \ref{th.neg}}\label{sec.neg}

This section is aimed at the proof of Theorem \ref{th.neg}, so that we assume throughout this section that $\sigma\in(-2,0)$. We follow ideas employed in \cite{IS25b} and, instead of the previous strategy of shooting backward (that is, in the opposite direction of the flow) from the critical point $Q_1$, we construct the proof on a shooting technique on the two-dimensional unstable manifold of $P_0$. We need a number of preparatory lemmas. The first one shows that there are no trajectories contained in the two-dimensional unstable manifold of $P_0$ that can be tangent to either one of the planes $\{Y=0\}$ (from the negative side) and $\{Y=-(\sigma+2)/(p-1)\}$.
\begin{lemma}\label{lem.notangent}
Let $(X(\eta),Y(\eta),Z(\eta))$ be a trajectory of the system \eqref{PSsyst} belonging to the unstable manifold stemming from $P_0$. Then there is no $\eta_0\in\real$ such that either $Y(\eta_0)=0$, $Y'(\eta_0)=0$ and $Y''(\eta_0)\leq0$, or $Y(\eta_0)=-(\sigma+2)/(p-1)$ and $Y'(\eta_0)=0$.
\end{lemma}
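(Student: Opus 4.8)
The plan is to settle both alternatives by a direct computation on the system \eqref{PSsyst}, exploiting that along any trajectory in the unstable manifold of $P_0$ entering $\mathcal{R}_+$ one has $X(\eta)>0$ and $Z(\eta)>0$ for all finite $\eta$; this follows from Lemma \ref{lem.P0} together with the invariance of the coordinate planes $\{X=0\}$ and $\{Z=0\}$.

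For the first alternative, I would suppose $Y(\eta_0)=0$ and $Y'(\eta_0)=0$. Evaluating the second equation of \eqref{PSsyst} at $\eta_0$ gives $Y'(\eta_0)=X(\eta_0)-Z(\eta_0)=0$, so $X(\eta_0)=Z(\eta_0)$. Differentiating the second equation once more and evaluating at $\eta_0$, and using $Y(\eta_0)=Y'(\eta_0)=0$ together with $Z'(\eta_0)=(\sigma+2)Z(\eta_0)$ (from the third equation at $Y=0$), all the mixed terms drop and one is left with $Y''(\eta_0)=2X(\eta_0)-(\sigma+2)Z(\eta_0)=-\sigma X(\eta_0)$, after inserting $X(\eta_0)=Z(\eta_0)$. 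Since $\sigma\in(-2,0)$ and $X(\eta_0)>0$, this yields $Y''(\eta_0)>0$, which is incompatible with $Y''(\eta_0)\leq0$. Hence the first type of tangency cannot occur.

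For the second alternative, the key observation is that on the plane $\{Y=Y_0\}$ with $Y_0:=-(\sigma+2)/(p-1)$ the term $\frac{p-1}{\sigma+2}XY$ in the second equation of \eqref{PSsyst} cancels exactly the free $X$-term, so that $Y'$ becomes independent of $X$, namely $Y'=-(N-2)Y_0-Z-Y_0^2$. Imposing $Y'(\eta_0)=0$ therefore forces $Z(\eta_0)=-Y_0\big((N-2)+Y_0\big)$, and a short simplification shows that the right-hand side equals precisely $Z_0$ from \eqref{Z0}. Consequently, the putative tangency point $(X(\eta_0),Y_0,Z_0)$ lies on the stationary line \eqref{stat.sol.plane}. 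Since $X(\eta_0)>0$, this point is non-critical, and by uniqueness of the orbit through it the trajectory would have to coincide with the line \eqref{stat.sol.plane}; but that line is the unstable manifold of $P_2$ (Lemma \ref{lem.P1P2}(b)), whose $\alpha$-limit is $P_2\neq P_0$, contradicting that our trajectory emanates from $P_0$.

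The computations are elementary; the only genuinely delicate point is the second alternative, where one must notice the exact cancellation of the $X$-dependence on the plane $\{Y=Y_0\}$ and recognize the resulting critical value of $Z$ as $Z_0$, so that uniqueness of trajectories through a non-critical point of the stationary line \eqref{stat.sol.plane} closes the argument.
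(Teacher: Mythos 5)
Your proof is correct and follows essentially the same route as the paper's: both alternatives are settled by evaluating (and, for the first one, differentiating) the second equation of \eqref{PSsyst}, yielding $Y''(\eta_0)=-\sigma X(\eta_0)>0$ in the first case and $Z(\eta_0)=Z_0$ in the second, after which uniqueness of the orbit through a non-critical point of the line \eqref{stat.sol.plane} gives the contradiction. Your explicit remarks on the cancellation of the $X$-terms on $\{Y=-(\sigma+2)/(p-1)\}$ and on the positivity of $X$ and $Z$ along trajectories in $\mathcal{R}_+$ only make explicit what the paper uses implicitly.
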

\begin{proof}
Assume for contradiction that $Y(\eta_0)=Y'(\eta_0)=0$. We infer from the second equation in \eqref{PSsyst} that $X(\eta_0)=Z(\eta_0)$. Then, by differentiating the second equation in \eqref{PSsyst}, we deduce that
$$
Y''(\eta_0)=X'(\eta_0)-Z'(\eta_0)=2X(\eta_0)-(\sigma+2)Z(\eta_0)=-\sigma X(\eta_0)>0,
$$
and a contradiction, since $X(\eta_0)=Z(\eta_0)>0$ for any $\eta_0\in\real$. Assume next by contradiction that there is $\eta_0\in\real$ such that $Y(\eta_0)=-(\sigma+2)/(p-1)$ and $Y'(\eta_0)=0$. We deduce from the second equation in \eqref{PSsyst} that $Z(\eta_0)=Z_0$, where $Z_0$ is defined in \eqref{Z0}. It thus follows that the trajectory passes through a point of the line $\{Y=-(\sigma+2)/(p-1), Z=Z_0\}$ with $X>0$. But this line is itself a trajectory, as shown in \eqref{stat.sol.plane}, and the uniqueness entails that the trajectory under consideration coincides with \eqref{stat.sol.plane}. But this is a contradiction, since the trajectory \eqref{stat.sol.plane} does not belong to the unstable manifold of $P_0$, completing the proof.
\end{proof}
The second preparatory lemma is in fact the core of the argument for the existence of (at least) a self-similar solution.
\begin{lemma}\label{lem.strip}
Let $(X,Y,Z)(\eta)$ be a trajectory of the system \eqref{PSsyst} such that there exists $\eta_*\in\real$ such that
\begin{equation}\label{cond.strip}
X(\eta_*)>0, \quad Z(\eta_*)>0, \quad -\frac{\sigma+2}{p-1}<Y(\eta)<0, \quad {\rm for \ any} \ \eta\in(\eta_*,\infty).
\end{equation}
Then the trajectory $(X,Y,Z)(\eta)$ enters the critical point $Q_1$ as $\eta\to\infty$.
\end{lemma}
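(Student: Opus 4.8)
The plan is to pass to the projected coordinates $(x,y,z)=(1/X,Y/X,Z/X)$ of \eqref{change2}, in which $Q_1$ is exactly the origin, and to prove that all three quantities tend to $0$ as $\eta\to\infty$. First I would integrate the first equation of \eqref{PSsyst} explicitly to obtain $X(\eta)=X(\eta_*)e^{2(\eta-\eta_*)}$, so that $X(\eta_*)>0$ yields $X(\eta)>0$ for all $\eta$ and, crucially, $X(\eta)\to\infty$ as $\eta\to\infty$; in particular $x=1/X\to0$. Since the plane $\{Z=0\}$ is invariant, the hypothesis $Z(\eta_*)>0$ also guarantees $Z(\eta)>0$ throughout, so that the ratios $y$ and $z$ are well defined. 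The limit $y=Y/X\to0$ is then immediate: the strip condition \eqref{cond.strip} bounds $|Y(\eta)|<(\sigma+2)/(p-1)$ on $(\eta_*,\infty)$, and a bounded quantity divided by $X\to\infty$ tends to $0$.

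The heart of the matter is the limit $z=Z/X\to0$, and this is exactly where the sign $\sigma<0$ becomes decisive. I would compute the logarithmic derivative of the ratio directly from the first and third equations of \eqref{PSsyst},
\[
\frac{d}{d\eta}\ln\frac{Z(\eta)}{X(\eta)}=\frac{Z'}{Z}-\frac{X'}{X}=\bigl(\sigma+2+(p-1)Y(\eta)\bigr)-2=\sigma+(p-1)Y(\eta).
\]
Because $Y(\eta)<0$ on $(\eta_*,\infty)$ and $p>1$, the term $(p-1)Y(\eta)$ is negative, so $\sigma+(p-1)Y(\eta)<\sigma<0$ for every $\eta>\eta_*$. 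Integrating from $\eta_*$ to $\eta$ then gives $\ln(Z/X)(\eta)<\ln(Z/X)(\eta_*)+\sigma(\eta-\eta_*)\to-\infty$, whence $z=Z/X\to0$. The same computation, keeping only the bound $(p-1)Y<0$, yields $Z(\eta)<Z(\eta_*)e^{(\sigma+2)(\eta-\eta_*)}$, which confirms \emph{en passant} that there is no finite-$\eta$ blow-up and that the orbit is genuinely defined on all of $(\eta_*,\infty)$.

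Putting the three limits together, the projected orbit $(1/X,Y/X,Z/X)(\eta)$ converges to $(0,0,0)=Q_1$ as $\eta\to\infty$, which is precisely the claim. Since $y=Y/X<0$ along the orbit, the approach takes place through the half-space $\{y<0\}$, hence not along the unstable manifold of $Q_1$ (contained in the $y$-axis) but on its center manifold, consistently with the equation \eqref{cmf}. The one genuinely delicate point is the control of $Z/X$; what makes it work is that the drift $\sigma+(p-1)Y$ is strictly negative \emph{uniformly} in the strip, and this uses $\sigma<0$ in an essential way. Were $\sigma\ge0$ this monotone-ratio argument would break down, since the drift could change sign inside the strip, and one would then have to analyze the neutral direction at $Q_1$ by the center-manifold reduction of Lemma \ref{lem.Q1} rather than by this direct estimate.
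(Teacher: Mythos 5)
Your proof is correct, and the overall strategy coincides with the paper's: work in the projected variables \eqref{change2}, in which $Q_1$ is the origin, and show that $1/X$, $Y/X$ and $Z/X$ all tend to $0$ as $\eta\to\infty$. The difference is in execution, and yours is genuinely more elementary on one point. The paper does not integrate $X'=2X$ explicitly; it argues instead that $X$ and $Z$ are monotone in the strip, so the limits $X_\infty,Z_\infty\in(0,\infty]$ exist, and then excludes $X_\infty<\infty$ by a case analysis: if also $Z_\infty<\infty$ the $\omega$-limit would be a finite critical point inside the strip (none exists), while if $Z_\infty=\infty$ the orbit would enter $Q_4$, contradicting Lemma \ref{lem.Q4}. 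Your explicit formula $X(\eta)=X(\eta_*)e^{2(\eta-\eta_*)}$, together with your observation that the hypothesis \eqref{cond.strip} (plus the exponential bound $Z(\eta)\le Z(\eta_*)e^{(\sigma+2)(\eta-\eta_*)}$ and the boundedness of $Y$) rules out finite-$\eta$ blow-up, makes that entire case analysis unnecessary and removes the dependence on Lemma \ref{lem.Q4}; the paper's longer route is the one that generalizes to settings where the $X$-equation does not integrate in closed form (as in the quasilinear analogues \cite{IS25a, IS25b}), but here it is redundant. For the delicate limit $Z/X\to0$ the two arguments are the same estimate in different coordinates: the paper writes $Z$ as a graph over $X$, bounds $Z'(X)\le(\sigma+2)Z/2X$ and compares to get $Z\le KX^{(\sigma+2)/2}$, hence $Z/X\le KX^{\sigma/2}$, while you integrate $\bigl(\ln(Z/X)\bigr)'=\sigma+(p-1)Y<\sigma$; since $X\sim e^{2\eta}$, the bound $X^{\sigma/2}$ is exactly your $e^{\sigma\eta}$, and both hinge on $\sigma<0$ precisely as you emphasize in your closing remark.
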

\begin{proof}
The condition \eqref{cond.strip} together with the first and the third equation of the system \eqref{PSsyst} ensure that $X(\eta)$ and $Z(\eta)$ are increasing functions of $\eta$ for $\eta>\eta_*$ along the trajectory. We next show that necessarily $X(\eta)\to\infty$ as $\eta\to\infty$. Since both $X(\eta)$ and $Z(\eta)$ are monotone, there is
$$
X_{\infty}:=\lim\limits_{\eta\to\infty}X(\eta)\in(0,\infty], \quad Z_{\infty}:=\lim\limits_{\eta\to\infty}Z(\eta)\in(0,\infty].
$$
Assume for contradiction that $X_{\infty}<\infty$. Then either $Z_{\infty}<\infty$, which entails that the $\omega$-limit of the trajectory is a finite critical point with coordinates satisfying \eqref{cond.strip}, and this is impossible since such a point does not exist, or $Z_{\infty}=\infty$. In this latter case, since $Y$ is uniformly bounded for $\eta>\eta_*$, we deduce that
$$
\lim\limits_{\eta\to\infty}\frac{Z(\eta)}{Y(\eta)}=-\infty, \quad \lim\limits_{\eta\to\infty}\frac{Z(\eta)}{X(\eta)}=\infty,
$$
hence the trajectory under consideration connects to $Q_4$, contradicting Lemma \ref{lem.Q4}. It thus follows that $X_{\infty}=\infty$ and thus $Y/X\to0$ as $\eta\to\infty$. Moreover, the inverse function theorem ensures that the component $Z$ can be written as a graph of a function of $X$ with
$$
Z'(X)=\frac{Z(X)[\sigma+2+(p-1)Y]}{2X}\leq \frac{(\sigma+2)Z(X)}{2X}.
$$
By comparison with the solutions to the equation
$$
z'(X)=\frac{(\sigma+2)z(X)}{2X},
$$
starting from an arbitrary point of the trajectory as initial condition, we deduce that
$$
Z(X)\leq z(X)=KX^{(\sigma+2)/2}, \quad K>0,
$$
and thus $Z/X\to0$ as $\eta\to\infty$ along the trajectory under consideration. We have thus proved that
$$
X(\eta)\to\infty, \quad \frac{Y(\eta)}{X(\eta)}\to0, \quad \frac{Z(\eta)}{X(\eta)}\to0, \quad {\rm as} \ \eta\to\infty,
$$
which implies that the trajectory enters the critical point $Q_1$, completing the proof.
\end{proof}
We need one more preparatory result before going to the actual proof of Theorem \ref{th.neg}.
\begin{lemma}\label{lem.cylinder}
The trajectories $(l_C)_{C\in(0,\infty)}$ on the unstable manifold of $P_0$ lie above (in the sense of a larger $Z$-coordinate) the parabolic cylinder
\begin{equation}\label{cyl2}
\mathcal{H}=\left\{(X,Y,Z)\in\real^3: Z=-(N-2)Y-Y^2, \ X\geq0, \ -\frac{\sigma+2}{p-1}\leq Y\leq0\right\}
\end{equation}
\end{lemma}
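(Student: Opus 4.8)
The plan is to recast ``lying above $\mathcal{H}$'' as the positivity of the function
$$G(X,Y,Z):=Z+(N-2)Y+Y^2,$$
since a point with $-\frac{\sigma+2}{p-1}\le Y\le0$ has a larger $Z$-coordinate than $\mathcal{H}$ precisely when $G>0$. I would then run a barrier (invariant-region) argument for $G$ along the flow of \eqref{PSsyst}, resting on two facts: that every $l_C$ emanates from $P_0$ inside $\{G>0\}$, and that the flow can cross the surface $\{G=0\}$ only in the direction of increasing $G$ while $Y$ stays in the strip $-\frac{\sigma+2}{p-1}\le Y\le0$.

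For the first fact I would use the local description of the unstable manifold. Since $\sigma<0$, \eqref{interm2}--\eqref{interm3} give $Y\sim-Z/(N+\sigma)$ as $\eta\to-\infty$ along any $l_C$ with $C\in(0,\infty)$, the $X$-contribution in \eqref{lC} being of lower order and $Y^2=o(Z)$. Substituting into $G$ yields the leading behaviour
$$G\sim Z\Big(1-\frac{N-2}{N+\sigma}\Big)=\frac{\sigma+2}{N+\sigma}\,Z>0,$$
because $\sigma+2>0$ and $N+\sigma>0$; hence each $l_C$ leaves $P_0$ strictly above $\mathcal{H}$, and inside the strip since $Y\to0^-$.

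The core is the sign of $G$ along the flow on $\{G=0\}$. On that surface, where $Z=-(N-2)Y-Y^2$, the second and third equations of \eqref{PSsyst} simplify to $Y'=X\big(1+\frac{p-1}{\sigma+2}Y\big)$ and $Z'=-Y\big((N-2)+Y\big)\big(\sigma+2+(p-1)Y\big)$, so that
$$\frac{dG}{d\eta}\Big|_{\{G=0\}}=-Y\big((N-2)+Y\big)\big(\sigma+2+(p-1)Y\big)+\big((N-2)+2Y\big)X\Big(1+\frac{p-1}{\sigma+2}Y\Big).$$
For $-\frac{\sigma+2}{p-1}\le Y\le0$ and $X\ge0$ each factor is nonnegative: $-Y\ge0$; the two factors $\sigma+2+(p-1)Y$ and $1+\frac{p-1}{\sigma+2}Y$ are nonnegative and vanish only at $Y=-\frac{\sigma+2}{p-1}$; and $(N-2)+Y$, $(N-2)+2Y$ are positive because $\frac{\sigma+2}{p-1}<\frac{N-2}{2}$, an inequality I would deduce from $p>p_{JL}(\sigma)$ through the form \eqref{pJL.alt} together with $\sigma\in(-2,0)$. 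Thus $\frac{dG}{d\eta}\ge0$ on $\{G=0\}$ within the strip, and strictly so whenever $-\frac{\sigma+2}{p-1}<Y\le0$ and $X>0$.

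With these ingredients I would argue by contradiction along the passage of $l_C$ through the strip: since the trajectory starts in $\{G>0\}$ and the flow cannot decrease $G$ on $\{G=0\}$ there, it cannot cross $\mathcal{H}$ downward while $Y$ remains in the strip. The top edge $\{Y=0\}$ is harmless, as $G=Z>0$ identically on $\{Y=0\}\cap\mathcal{R}_+$. The delicate point, which I expect to be the main obstacle, is the bottom edge $Y=-\frac{\sigma+2}{p-1}$: there the two linear factors and hence $\frac{dG}{d\eta}$ vanish, and just outside the strip the sign of $\frac{dG}{d\eta}$ on $\{G=0\}$ reverses, so one must prevent $l_C$ from re-entering the strip below $\mathcal{H}$ through this edge. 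A would-be downward crossing exactly at the edge would place $l_C$ on the line $\{Y=-\frac{\sigma+2}{p-1},\,Z=Z_0\}$ (using the short identity $-(N-2)Y-Y^2=Z_0$ at $Y=-\frac{\sigma+2}{p-1}$, which follows from \eqref{Z0}), i.e. on the explicit orbit \eqref{stat.sol.plane}; since $X>0$ this is a non-equilibrium point and uniqueness rules it out, because \eqref{stat.sol.plane} is not on the unstable manifold of $P_0$. The non-tangency of $l_C$ to $\{Y=-\frac{\sigma+2}{p-1}\}$ provided by Lemma \ref{lem.notangent}, combined with this uniqueness argument, is what closes the gap at the degenerate corner; everything else is sign-chasing.
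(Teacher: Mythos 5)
Your proof is correct and is essentially the paper's own argument: the same local expansion \eqref{lC} is used to show that each $l_C$ leaves $P_0$ above $\mathcal{H}$, and your quantity $\frac{dG}{d\eta}$ restricted to $\{G=0\}$ is exactly the paper's scalar product $E(X,Y)$ of the vector field with the normal $(0,N-2+2Y,1)$, with the positivity of $N-2+2Y$ obtained in the paper from $p>p_S(\sigma)$, which is equivalent to your inequality $\frac{\sigma+2}{p-1}<\frac{N-2}{2}$. The only (welcome) refinement is your explicit handling of the degenerate edge $Y=-\frac{\sigma+2}{p-1}$, where $E$ vanishes, via uniqueness of trajectories through the explicit orbit \eqref{stat.sol.plane}; the paper's proof passes over this tangency case silently.
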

\begin{proof}
On the one hand, we recall from \eqref{lC} that the trajectories contained in the unstable manifold of $P_0$ satisfy, as $\eta\to-\infty$, that
$$
Z(\eta)\sim\frac{(N+\sigma)X(\eta)}{N}-(N+\sigma)Y(\eta)\geq-(N+\sigma)Y(\eta)>-(N-2)Y(\eta),
$$
while the surface of the cylinder \eqref{cyl2} satisfies
$$
Z=-(N-2)Y-Y^2\leq-(N-2)Y.
$$
Thus, in a left-neighborhood of $P_0=(0,0,0)$, the trajectories $l_C$ with $C\in(0,\infty)$ start above the cylinder \eqref{cyl2}. On the other hand, the direction of the flow of the system \eqref{PSsyst} across the cylinder $\mathcal{H}$, with normal direction $\overline{n}=(0,N-2+2Y,1)$, is given by the sign of the scalar product between $\overline{n}$ and the vector field of the system \eqref{PSsyst}, which gives the expression
$$
E(X,Y):=\frac{\sigma+2+(p-1)Y}{\sigma+2}\left[X(N+2Y-2)-(\sigma+2)Y(N-2+Y)\right].
$$
Since $(X,Y,Z)\in\mathcal{H}$, it readily follows that $-Y(N-2+Y)\geq0$ and $\sigma+2+(p-1)Y>0$. Moreover,
$$
N+2Y-2>N-2-\frac{2(\sigma+2)}{p-1}=\frac{p(N-2)-(N+2\sigma+2)}{p-1}=\frac{N-2}{p-1}(p-p_S(\sigma))>0.
$$
We thus infer that the flow of the system \eqref{PSsyst} points outwards the cylinder \eqref{cyl2} and thus the trajectories $l_C$ cannot cross the cylinder towards its interior, completing the proof.
\end{proof}
We next recall that the analysis of the linearized equation \eqref{ODE.linear} in \cite[Theorem]{Le89} implies that the solutions $y(s)$ to \eqref{ODE.linear} such that $y(s)\to K\in(0,\infty)$ as $s\to\infty$ have exactly two zeros provided
\begin{equation}\label{interm14}
A-\sqrt{A^2-4B}>4,
\end{equation}
and a single zero in the opposite case, where we recall that $A$ and $B$ are defined in \eqref{coefs}. We first require that $A>4$, which leads to $N>6$ and
$$
p>p_0:=1+\frac{2(\sigma+2)}{N-6}.
$$
This inequality is automatically fulfilled when $p_{JL}(\sigma)\geq p_0$. Recalling the alternative form of $p_{JL}(\sigma)$ given in \eqref{pJL.alt}, the inequality $p_{JL}(\sigma)\geq p_0$ is equivalent to
$$
N-6\geq N-4-\sigma-\sqrt{(2N+\sigma-2)(\sigma+2)},
$$
which leads after simple calculations to the condition \eqref{interm13}. Assuming \eqref{interm13}, the inequality \eqref{interm14} is equivalent, after easy algebraic manipulations, to $B-2A+4>0$, which gives
\begin{equation}\label{main.ineq.neg}
\sigma(N-2)+4>\frac{(\sigma+2)(\sigma-2)}{p-1}.
\end{equation}
We have thus two cases:

$\bullet$ either
$$
\sigma(N-2)+4\geq0 \quad {\rm or \ equivalently} \quad N\leq\frac{2\sigma-4}{\sigma},
$$
when the inequality \eqref{main.ineq.neg} is then trivially satisfied for any $p>1$, since the right-hand side is negative.

$\bullet$ or $\sigma(N-2)+4<0$ and in this case the inequality \eqref{main.ineq.neg} is fulfilled for
$$
p<\frac{\sigma(N-2+\sigma)}{\sigma(N-2)+4}=\overline{p_L}(\sigma),
$$
(recalling that $\overline{p_L}(\sigma)$ is defined in \eqref{Lepin.neg}). Noticing that 
$$
\overline{p_L}(\sigma)-p_{JL}(\sigma)=-\frac{(\sigma+2)[(\sigma-2)R+\sigma^2+N(\sigma+2)-2\sigma]}{[\sigma(N-2)+4](N-4-\sigma-R)},
$$
with $R$ defined in \eqref{RD}, and that
$$
(\sigma^2+N(\sigma+2)-2\sigma)^2-((2-\sigma)R)^2=(N\sigma+2N+2\sigma-4)^2>0,
$$
one can show in a completely similar way as in the case $\sigma>0$ that $\overline{p_L}(\sigma)>p_{JL}(\sigma)$ whenever the condition $\sigma(N-2)+4<0$ is fulfilled.

Assume thus from now on that we are in one of the two previous cases in which \eqref{main.ineq.neg} holds true. We need one more preparatory lemma.
\begin{lemma}\label{lem.cross}
There is $\delta_0>0$ sufficiently small such that any trajectory entering the critical point $Q_1$ with
\begin{equation}\label{almost.line}
\lim\limits_{\eta\to\infty}Z(\eta)\in(Z_0,Z_0+\delta_0)
\end{equation}
(where the previous limit exists according to \eqref{interm5}) intersects at least once the plane $\{Y=-(\sigma+2)/(p-1)\}$.
\end{lemma}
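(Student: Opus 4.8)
The plan is to couple the spectral information furnished by the linearized equation \eqref{ODE.linear} with the elementary observation that, through the third equation of \eqref{PSsyst}, the sign of $\sigma+2+(p-1)Y$ governs the monotonicity of the $Z$-component. Under the standing assumptions we are in the range where \eqref{interm14} holds, so that by \cite[Theorem]{Le89} the solution $y(s)$ of \eqref{ODE.linear} with $y(s)\to K$ as $s\to\infty$ has exactly two zeros. Undoing \eqref{var.change} one has $Z=g^{p-1}$ and $Z_0=C(\sigma)^{p-1}$; hence a limit $\lim_{\eta\to\infty}Z(\eta)=k$ slightly above $Z_0$ corresponds to a solution $g$ of \eqref{ODE2} tending to a constant slightly above $C(\sigma)$, and the two zeros of $y$ are exactly the crossings of the profile with the stationary solution, that is, the crossings of the trajectory with the plane $\{Z=Z_0\}$. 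First I would fix $\delta_0>0$ so small that, for every $k\in(Z_0,Z_0+\delta_0)$, the trajectory remains sufficiently close to the stationary line \eqref{stat.sol.plane} (namely $\mathcal{C}_{Z_0}$) in a neighborhood of $Q_1$ for these two sign changes to survive the nonlinear perturbation; exactly as in the existence proof of Theorems \ref{th.large} and \ref{th.small}, this continuity argument ensures that $\mathcal{C}_k$ crosses $\{Z=Z_0\}$ at least twice.

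Then I would read off the behavior near $Q_1$. Since $k>Z_0$, the center manifold equation \eqref{cmf}, rewritten after undoing \eqref{change2} as
\begin{equation*}
Y=-\frac{\sigma+2}{p-1}+\frac{\sigma+2}{(p-1)X}(Z-Z_0)+O\left(\frac{1}{X^2}\right),
\end{equation*}
yields $Y>-(\sigma+2)/(p-1)$ for all sufficiently large $\eta$, because $Z-Z_0\to k-Z_0>0$ while $X\to\infty$. I then argue by contradiction: assume $\mathcal{C}_k$ never meets the plane $\{Y=-(\sigma+2)/(p-1)\}$. Then $\sigma+2+(p-1)Y$ keeps a constant sign along the whole trajectory and, being positive near $Q_1$, it is positive everywhere. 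The third equation of \eqref{PSsyst} forces $Z'=Z(\sigma+2+(p-1)Y)>0$ for all $\eta$, so $Z$ is strictly increasing and can take the value $Z_0$ at most once. This contradicts the two crossings of $\{Z=Z_0\}$ found above, and therefore $\mathcal{C}_k$ must intersect $\{Y=-(\sigma+2)/(p-1)\}$ at least once.

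The delicate step is the first one: converting the spectral count ``$y(s)$ has exactly two zeros'' into the geometric statement ``$\mathcal{C}_k$ crosses $\{Z=Z_0\}$ at least twice'' for $k$ on the upper side of $Z_0$. This requires a uniform, tubular-neighborhood estimate near the stationary line $\mathcal{C}_{Z_0}$ guaranteeing that both sign changes of the linear solution persist for all $k\in(Z_0,Z_0+\delta_0)$; since this is precisely the mechanism already deployed in Section \ref{sec.large}, it may be invoked rather than reproved. Once the two crossings are secured, the rest is the brief monotonicity contradiction above, which uses nothing more than the positivity of $Z$ and the sign of $\sigma+2+(p-1)Y$.
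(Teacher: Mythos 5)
Your proposal is correct and follows essentially the same route as the paper's proof: invoke \cite[Theorem]{Le89} for the exactly two zeros of the linearized solution, transfer this to two crossings of $\{Z=Z_0\}$ for trajectories with limit in $(Z_0,Z_0+\delta_0)$ by persistence of the linearization, and then use the sign of $\sigma+2+(p-1)Y$ in the third equation of \eqref{PSsyst} to show that two such crossings are incompatible with $Z$ being monotone. Your phrasing of the last step as a contradiction, with the sign near $Q_1$ pinned down via the center manifold expansion \eqref{cmf}, is just a slightly more explicit rendering of the paper's ``change of monotonicity of $Z$'' argument.
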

\begin{proof}
We go back to the linearized equation \eqref{ODE.linear}. The analysis performed in \cite[Theorem]{Le89} (see also \cite[Lemma 9]{Le90}) proves that, if \eqref{main.ineq.neg} is in force, the solution $y$ to \eqref{ODE.linear} such that $y(s)\to K\in(0,\infty)$ as $s\to\infty$ has exactly two zeros, and thus the solution $g(s)$ to \eqref{ODE2} intersects twice the constant solution $g\equiv C(\sigma)$. As already discussed in the proof of Theorem \ref{th.large}, this fact means that the trajectory containing the solution $g$ (and the corresponding profile $f$ defined by \eqref{var.change}) intersects twice the plane $\{Z=Z_0\}$. Since the latter number of intersections is obtained by a linearization, it stays true for the actual trajectories lying in a suitably small neighborhood of the constant one \eqref{stat.sol.plane}, that is, those satisfying \eqref{almost.line} for some $\delta_0>0$ sufficiently small. An easy argument of monotonicity of the $Z$ coordinate implies that the trajectories satisfying \eqref{almost.line} have to change the monotonicity of $Z$ at least once in order to cross (at least) twice the plane $\{Z=Z_0\}$, and a change of monotonicity of $Z$ is equivalent to an intersection with the plane $\{Y=-(\sigma+2)/(p-m)\}$, according to the third equation of the system \eqref{PSsyst}, completing the proof.
\end{proof}
With these preparations, we are now ready to give the proof of Theorem \ref{th.neg}. The strategy, completely analogous to the one employed in the proof of \cite[Theorem 1.1]{IS25b}, is based on a construction of tubular neighborhoods in order to contradict the possibility that all the trajectories contained in the unstable manifold of $P_0$ remain in the half-space $\{Y>-(\sigma+2)/(p-1)\}$. Due to the analogy with the above mentioned reference, some of the steps in the construction will only be sketched, skipping some technical details that can be found in the proof of \cite[Theorem 1.1]{IS25b}.
\begin{proof}[Proof of Theorem \ref{th.neg}]
Let us split the trajectories $(l_C)_{C\in(0,\infty)}$ defined in \eqref{lC} into the following three sets:
\begin{equation}\label{sets.neg}
\begin{split}
&\mathcal{A}=\left\{C\in(0,\infty): {\rm there \ is} \ \eta_0\in\real, \ -\frac{\sigma+2}{p-1}<Y(\eta)<0 \ {\rm for \ any} \ \eta\in(-\infty,\eta_0),\right.\\&\left. {\rm and} \ Y(\eta_0)=-\frac{\sigma+2}{p-1}, \ Y'(\eta_0)<0\right\},\\
&\mathcal{C}=\left\{C\in(0,\infty): {\rm there \ is} \ \eta_0\in\real, \ -\frac{\sigma+2}{p-1}<Y(\eta)<0 \ {\rm for \ any} \ \eta\in(-\infty,\eta_0),\right.\\&\left. {\rm and} \ Y(\eta_0)=0, \ Y'(\eta_0)>0\right\},\\
&\mathcal{B}=(0,\infty)\setminus(\mathcal{A}\cup\mathcal{C}).
\end{split}
\end{equation}
It is obvious by definition that the sets $\mathcal{A}$ and $\mathcal{C}$ are open. As we have proved in Lemma \ref{lem.Z0} that $l_0$ (contained in the invariant plane $\{Z=0\}$) connects $P_0$ to $Q_5$ through the positive half-plane $\{Y>0\}$, an obvious argument of continuity with respect to the parameter $C$ on the unstable manifold of $P_0$ ensures that, for very small values of $C>0$, the trajectories $l_C$ will enter the half-space $\{Y>0\}$ very shortly after leaving $P_0$ and thus $\mathcal{C}$ is non-empty. More precisely, we infer that there is $C_*>0$ such that $(0,C_*)\subseteq\mathcal{C}$.

The most complicated argument in the proof is the non-emptiness of $\mathcal{A}$. We employ for this goal the following strategy: assuming for contradiction that $\mathcal{A}=\mathcal{B}=\emptyset$ (otherwise, we have a solution by Lemma \ref{lem.strip}), we build a system of tubular neighborhoods of the trajectories (in opposite direction to their flow) \eqref{stat.sol.plane} connecting $P_2$ to $Q_1$, $l_{\infty}$ connecting $P_0$ to $P_2$ and the $Y$ axis in the region $\{Y>0\}$ connecting $Q_2$ to $P_0$. We take the neighborhoods as thin as needed in order for a continuum of orbits $(l_C)_{C>C^*}$ for some $C^*>0$ very large to form a barrier for the trajectories inside these three tubular neighborhoods. This construction allows us to conclude that there is at least one trajectory connecting $Q_2$ to $Q_1$, through the three connected tubular neighborhoods, for which $Y(\eta)>-(\sigma+2)/(p-m)$ for any $\eta\in\real$, contradicting thus Lemma \ref{lem.cross}. We next give a sketch of this construction, recalling that all the details can be found by the interested reader in the proof of \cite[Theorem 1.1]{IS25b}. Note first that the assumption that $\mathcal{A}=\mathcal{B}=\emptyset$ is equivalent to say that all the trajectories $(l_C)_{C\in(0,\infty)}$ enter the strip $Y\in(-(\sigma+2)/(p-m),0)$ and remain there until crossing the plane $\{Y=0\}$ to the positive half-space $\{Y>0\}$.

By taking a neighborhood of $Q_1$ in the system \eqref{PSsystw0} and undoing the change of variable $w=xz$ and \eqref{change2}, we infer from the fact that the orbit $l_{\infty}$ enters the saddle point $P_2$ (whose unique unstable trajectory is the straight line \eqref{stat.sol.plane}) and the local behavior near a saddle point \cite[Theorem 2.9, Section 2.8]{Shilnikov} that, given $\epsilon>0$, there exists $C^*>0$ large enough (depending on $\epsilon$) such that the trajectory $l_C$ enters the neighborhood $\mathcal{V}$ defined below for any $C\in(C^*,\infty)$:
\begin{equation*}
\begin{split}
\mathcal{V}=&\left\{(X,Y,Z)\in\real^3:X>\max\left\{\frac{1}{\epsilon},\frac{\sigma+2}{(p-1)\epsilon},\sqrt{\frac{Z_0+\epsilon}{\epsilon}}\right\},\right.\\&\left. -\frac{\sigma+2}{p-1}<Y<-\frac{\sigma+2}{p-1}+\epsilon, Z_0-\epsilon<Z<Z_0+\epsilon\right\},
\end{split}
\end{equation*}
where $Z_0$ is defined in \eqref{Z0}. Complete details of the previous arguments are given in the proof of \cite[Theorem 1.1]{IS25b}. We then construct a "tubular" right-neighborhood of the straight line \eqref{stat.sol.plane} extending $\mathcal{V}$ to
\begin{equation*}
\begin{split}
\mathcal{W}=&\left\{(X,Y,Z)\in\real^3: 0\leq X\leq\max\left\{\frac{1}{\epsilon},\frac{\sigma+2}{(p-1)\epsilon},\sqrt{\frac{Z_0+\epsilon}{\epsilon}}\right\},\right.\\
&\left.-\frac{\sigma+2}{p-1}<Y<-\frac{\sigma+2}{p-1}+\epsilon, Z_0-\epsilon<Z<Z_0+\epsilon\right\}
\end{split}
\end{equation*}
We continue this tubular neighborhood with a tubular half-neighborhood of the trajectory $l_{\infty}$ (which connects $P_0$ to $P_2$, as proved in Lemma \ref{lem.X0}) defined by
\begin{equation*}
\mathcal{N}=\left\{(X,Y,Z)\in\real^3: 0\leq X<\epsilon, -(N-2)Y-Y^2<Z<Z_0+\epsilon, -\frac{\sigma+2}{p-1}\leq Y\leq0\right\}.
\end{equation*}
Moreover, Lemma \ref{lem.cylinder} ensures that the orbits $l_C$ on the unstable manifold of $P_0$ for $C>0$ sufficiently large will enter and remain in the neighborhood $\mathcal{N}$ up to the region $\mathcal{N}\cap\mathcal{W}$.

Let us shoot now backwards (in the reversed direction of the flow) from $Q_1$ with trajectories $\mathcal{C}_k$ with $k\in[Z_0,Z_0+\epsilon)$ for $\epsilon<\delta_0$ (with $\delta_0$ introduced in Lemma \ref{lem.cross}), where we recall that the definition of the trajectories $\mathcal{C}_k$ is given in Lemma \ref{lem.Q1} and the Remark after it. By continuity with respect to the parameter $k$ on the two-dimensional center manifold of $Q_1$, we deduce that, for $k-Z_0$ sufficiently small, the orbits $\mathcal{C}_k$ lie as close as we wish to the straight line \eqref{stat.sol.plane} (which coincides with $\mathcal{C}_{Z_0}$). Noticing that the flow on the plane $\{Y=-(\sigma+2)/(p-1)\}$, with normal vector $(0,1,0)$, is given by the sign of $Z_0-Z$, the trajectories $\mathcal{C}_k$ lying inside the neighborhood $\mathcal{V}\cup\mathcal{W}$ cannot cross the boundary formed by the unstable manifold of $P_0$ together with the region $\{Y=-(\sigma+2)/(p-1), Z\geq Z_0\}$ if $\epsilon$ is chosen sufficiently small (a precise and detailed choice of $\epsilon$ is completely similar to the one performed in \cite[p. 1427]{IS25b}).

We thus deduce that these trajectories remain in the neighborhood $\mathcal{V}\cup\mathcal{W}$ until approaching $P_2$, in the sense that for any such $k\in(Z_0,Z_0+\epsilon)$ there is $\eta_K\in\real$ such that $(X,Y,Z)(\eta_K)\in\mathcal{W}\cap\mathcal{N}$. By applying \cite[Theorem 2.9]{Shilnikov} in the neighborhood $\mathcal{W}\cap\mathcal{N}$ of the saddle $P_2$ we deduce that the trajectories $\mathcal{C}_k$ for $k-Z_0$ sufficiently small arrive through the neighborhood $\mathcal{N}$. Since both the unstable manifold of $P_0$ and the invariant plane $\{X=0\}$ are separatrices, it further follows that the trajectories under consideration remain in $\mathcal{N}$ until reaching the plane $\{Y=0\}$. A similar argument by applying \cite[Theorem 2.9]{Shilnikov} in a neighborhood included in $\mathcal{N}$ of the saddle point $P_0$ ensures that the trajectories $\mathcal{C}_k$ as above arrive through a "tubular" neighborhood of the unique trajectory in the stable manifold of $P_0$ in the half-space $\{Y>0\}$. But this trajectory connects the unstable node $Q_2$ to $P_0$ through the $Y$ axis, as shown in Lemma \ref{lem.P0}. The stability of $Q_2$ allows us to find trajectories $\mathcal{C}_k$ connecting $Q_2$ to $Q_1$ through the system of tubular neighborhoods constructed above. Such trajectories do not intersect the plane $\{Y=-(\sigma+2)/(p-1)\}$, in contradiction with Lemma \ref{lem.cross}. This contradiction implies that $\mathcal{A}$ is non-empty.

Since both $\mathcal{A}$ and $\mathcal{C}$ are non-empty and open, we conclude by a standard argument that also $\mathcal{B}$ is non-empty. Picking $C_0\in\mathcal{B}$, we conclude from Lemma \ref{lem.strip} that $l_{C_0}$ connects $P_0$ to $Q_1$ and thus has the desired behavior if we undo the change of variable \eqref{PSchange}, completing the proof.
\end{proof}

We plot in Figures \ref{fig5} and \ref{fig6} the outcome of numerical experiments showing how trajectories in the phase space and the corresponding profiles behave for $p<\overline{p_L}(\sigma)$, respectively $p>\overline{p_L}(\sigma)$, where for the simplicity of the pictures, the trajectories are projected onto the $XY$-plane. We observe in Figure \ref{fig5} the splitting of the trajectories between the sets $\mathcal{A}$ and $\mathcal{C}$ defined in \eqref{sets.neg}, while in Figure \ref{fig6} it is apparent that all the trajectories present a behavior corresponding to the set $\mathcal{C}$, suggesting the non-existence of any trajectory in either one of the sets $\mathcal{A}$ and $\mathcal{B}$.

\begin{figure}[ht!]
  \begin{center}
  \subfigure[Trajectories in the phase space for $p<\overline{p_L}(\sigma)$]{\includegraphics[width=7.5cm,height=6cm]{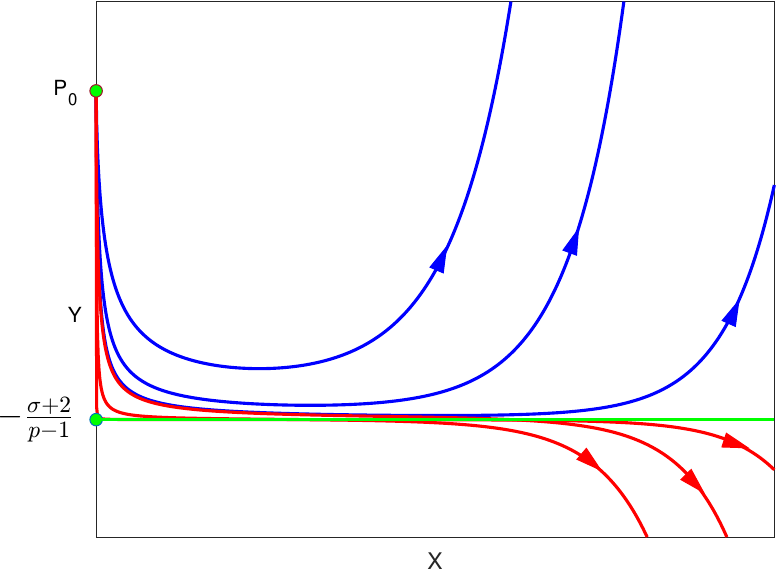}}
  \subfigure[Profiles corresponding to the trajectories for $p<\overline{p_L}(\sigma)$]{\includegraphics[width=7.5cm,height=6cm]{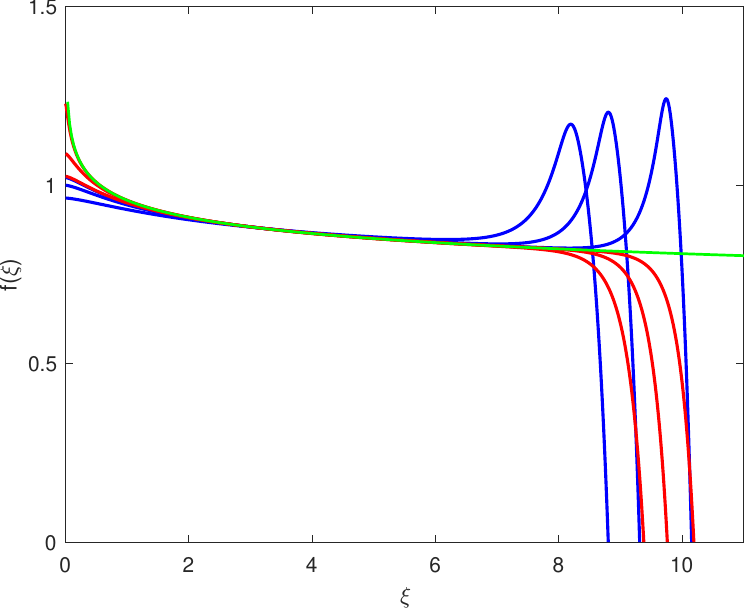}}
  \end{center}
  \caption{Trajectories and profiles for $N=8$, $\sigma=-0.6$, $p=20$, where $p_{JL}(\sigma)\approx11.4$ and $\overline{p_L}(\sigma)=\infty$.}\label{fig5}
\end{figure}

\begin{figure}[ht!]
  \begin{center}
  \subfigure[Trajectories in the phase space for $p>\overline{p_L}(\sigma)$]{\includegraphics[width=7.5cm,height=6cm]{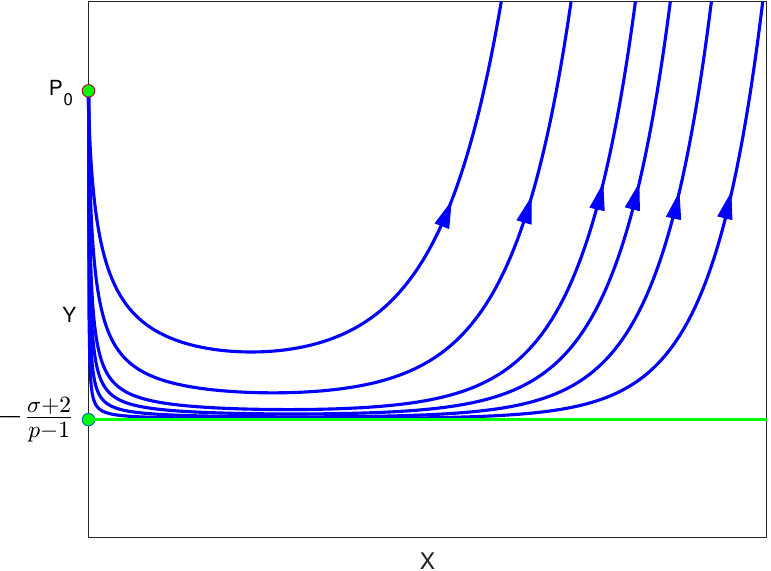}}
  \subfigure[Profiles corresponding to the trajectories for $p>\overline{p_L}(\sigma)$]{\includegraphics[width=7.5cm,height=6cm]{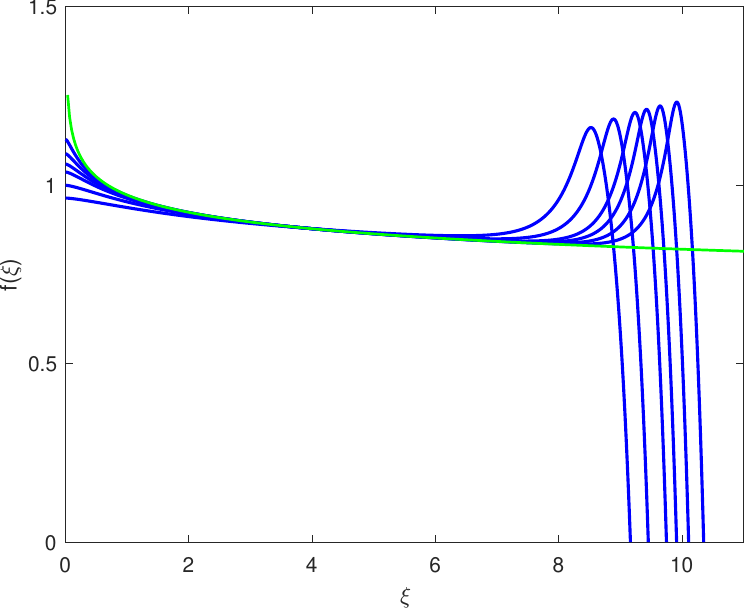}}
  \end{center}
  \caption{Trajectories and profiles for $N=10$, $\sigma=-0.6$, $p=20$, where $p_{JL}(\sigma)\approx2.68$ and $\overline{p_L}(\sigma)=5.55$.}\label{fig6}
\end{figure}

Let us finally remark that, as it is seen in Figure \ref{fig4}, respectively Figure \ref{fig6}, the reason for non-existence in the cases $\sigma\in(0,2)$ and $p>p_L(\sigma)$, respectively $\sigma\in(-2,0)$ and $p>\overline{p_L}(\sigma)$, is different. Indeed, it is apparent that in the former case, and recalling the sets defined in \eqref{sets.neg}, all the trajectories belong to the set $\mathcal{A}$, while in the latter all the trajectories belong to the set $\mathcal{C}$. This also explains the lack of continuity of $\overline{p_L}(\sigma)$ as $\sigma\to 0$ (recalling that $p_L=p_L(0)$).

\bigskip

\noindent \textbf{Acknowledgements} This work is partially supported by the Spanish project PID2024-160967NB-I00.

\bigskip

\noindent \textbf{Data availability} Our manuscript has no associated data.

\bigskip

\noindent \textbf{Conflict of interest} The authors declare that there is no conflict of interest.

\bibliographystyle{plain}

\end{document}